 \newtheorem{theorem}{Theorem}[section]
\newtheorem{proposition}[theorem]{Proposition}
\newtheorem{remark}[theorem]{Remark}
\newtheorem{definition}[theorem]{Definition}
\def \r{\mathbb R}	
 \def \h{\mathbb H}
\def \s{\mathbb S}
\def \m{\mathbb M}
\begin{document}
\begin{frontmatter}
\title{ Classification of rotational surfaces with constant skew curvature in 3-space forms}
  \author[label1]{Rafael L\'opez}
 \ead{rcamino@ugr.es}
 \address[label1]{ Departamento de Geometr\'{\i}a y Topolog\'{\i}a\\ Instituto de Matem\'aticas (IEMath-GR)\\
 Universidad de Granada\\
 18071 Granada, Spain}
 \author[label2]{\'Alvaro P\'ampano}
 \ead{alvaro.pampano@ehu.eus}
 \address[label2]{Departament of  Mathematics\\ Faculty of Science and Technology\\
University of the Basque Country\\
48940 Bilbao, Spain}

\begin{abstract}
In this paper, we classify the rotational surfaces with constant skew curvature in $3$-space forms. We also give a variational characterization of the profile curves of these surfaces as critical points of a curvature energy involving the exponential of the curvature of the curve. Finally, we provide a converse process to produce all rotational surfaces with constant skew curvature based on the evolution of a critical curve under the flow of its binormal vector field with prescribed velocity.
 
\end{abstract}
\begin{keyword}  constant skew curvature \sep rotational surfaces \sep  exponential   curvature energy \sep binormal evolution   
\MSC[2010] 53A10, 34C05, 37K25, 53C42
\end{keyword}
\end{frontmatter}

%%%%%%%%%%%%%%%%%%%%%%%%%%%%%
\section{Introduction}
%%%%%%%%%%%%%%%%%%%%%%%%%%%%%

The objective of our investigation is the study and classification of the rotational surfaces in $3$-space forms such that $H^2-K$ is constant. Here $H$ is the mean curvature  and $K$ is the Gaussian curvature of the surface. In particular, we characterize the profile curves of these surfaces as critical points for an energy involving the exponential function of the curvature of the curve.

The curvature of a surface $N^2$ in the Euclidean space $\r^3$ measures how the surface is curved on the space. While $H$ is an extrinsic notion that reflects how $N^2$ is immersed in $\r^3$, the Gaussian curvature $K$ is intrinsic and can be measured within the surface. The quantity $H^2-K$ is always non-negative and vanishes only at umbilical points. This function $H^2-K$ appears in different areas of physics and chemistry. Perhaps, the most known one is related with the theory of elasticity of membranes, the region between two neighboring domains to which we can assign elastic energies. From early works starting with  Germain and Poisson, the elasticity energy density is proportional to $H^2$. Here we refer to the pioneering work of Helfrich \cite{hel}, where the energy of the shape of elastic lipid bilayers, such as biomembranes, involves the integral of $H^2-K$ and its minimizers satisfy a Schr\"{o}ndiger equation whose potential is proportional to $H^2-K$. 

More recently, the function $H^2-K$ has appeared in quantum mechanics in the study of the dynamic of a massive particle with mass $m$ constrained to move on a surface $N^2$. In such a case, the curvature of the surface induces a potential  that is determined by the geometry of the surface. This confining potential $V$ appears as a form acting in the normal direction to $N^2$ and it is the responsible for the constraint: see \cite{costa,encinosa,jensen,sbr}. This potential is precisely
$$V=-\frac{\hbar^2}{2m}\left(H^2-K\right)$$
and is called the `distortion potential' in \cite{encinosa} or the `geometry-induced potential' in \cite{sbr}. The motion of the particle on the given surface $N^2$ is governed by the Shr\"{o}ndiger type equation
$$\mbox{i}\hbar \frac{\partial\Psi}{\partial t}= -\frac{\hbar^2}{2m}\Delta_N\Psi+V \Psi$$
where $\Delta_N$ is the Laplace-Beltrami operator on $N^2$. Another setting where the effect of the potential $H^2-K$ appears is in optics. For example, the propagation of a monochromatic light wave constrained to move on a two dimensional layer, modeled as a surface $N^2$, is described by the scalar Helmholtz equation $\left(\Delta_N+k^2\right)\Psi+\left(H^2-K\right)\Psi=0$, where $k$ is the wave number of light (\cite{longui,mar,pfjz,sch}).

All these examples suggest that surfaces with constant skew curvature have interest in   physics  and it is reasonable to consider this type of surfaces into non-flat ambient spaces. In this paper we consider this more general context. Let $\m^3(\rho)$ be a $3$-space form, that is, a $3$-dimensional simply connected complete Riemannian manifold with constant sectional curvature $\rho$. If $\rho=0$ we recover the Euclidean space $\r^3$. On the other hand, if $\rho>0$, $\m^3(\rho)$ represents the round sphere $\s^3(\rho)$, while if $\rho<0$, $\m^3(\rho)$ is the hyperbolic space $\h^3(\rho)$.

For a surface $N^2$ immersed in $\m^3(\rho)$, the mean curvature $H$ and the (intrinsic) Gaussian curvature $K$ are defined, respectively, as 
\begin{equation}
H=\frac{\kappa_1+\kappa_2}{2}, \quad\quad K=\kappa_1\kappa_2+\rho,\label{H}
\end{equation}
where $\kappa_1$ and $\kappa_2$ are the principal curvatures of $N^2$. We define the {\it skew curvature} $\mathcal{S}_k$ of the surface $N^2$ as
\begin{equation}
\mathcal{S}_k=2\sqrt{H^2-K+\rho}\,.\label{skew}
\end{equation}
Note that $\mathcal{S}_k$ is well defined since $H^2\geq K-\rho$ with equality only at umbilical points. In fact, using \eqref{H}, \eqref{skew} yields to
$$\mathcal{S}_k=2\sqrt{\frac{\left(\kappa_1+\kappa_2\right)^2}{4}-\kappa_1\kappa_2}=\sqrt{\left(\kappa_1-\kappa_2\right)^2}=|\kappa_1-\kappa_2|\geq 0.$$
Thus the skew curvature $\mathcal{S}_k$ quantifies how much a surface deviates from being totally umbilical. 

\begin{definition} A surface $N^2$ has constant skew curvature $c\geq 0$ if $\mathcal{S}_k=c$ on $N^2$, or equivalently, 
\begin{equation}
\kappa_1(p)=\kappa_2(p)+c\label{lw}
\end{equation}
for all $p\in N^2$. Here we assume that $\kappa_1\geq\kappa_2$ on $N^2$.
\end{definition}

We now show two particular examples of surfaces with constant skew curvature, which can be considered as `trivial examples'. Firstly, the case $c=0$ in \eqref{lw} implies that $N^2$ is a totally umbilical surface. Totally umbilical surfaces are planes and spheres in $\r^3$,  spheres  in $\s^3(\rho)$ and  hyperbolic planes, spheres, equidistant surfaces and  horospheres in $\h^3(\rho)$. Other family of constant skew curvature surfaces appears when the two principal curvatures are constant, that is, the family of isoparametric surfaces. Isoparametric surfaces in 3-space forms were classified by Cartan proving that they are either totally umbilical surfaces ($c=0$) or circular cylinders \cite{ca1,ca2}. From now on we will discard both types of surfaces.  

In    geometry, the surfaces with constant skew curvature have also received its attention. For example, Chen studied the skew curvature (what he called `difference curvature') and obtained lower bounds of $\int_{N^2} \mathcal{S}_k$ for closed surfaces in terms of the genus of $N^2$ (\cite{chen}). The skew curvature was also used by Milnor to define a family of differential forms in surfaces (\cite{mi1,mi2}). More recently, surfaces with constant skew curvature have been studied  as examples of  closed M\"{o}bius forms (\cite{li}). In \cite{toda}, the authors  proved that the class of constant skew curvature  surfaces   does not contain any Bonnet surface, that is,  none of them can be isometrically deformed preserving the mean curvature. Without assuming the constancy of the skew curvature, the equation of prescribing skew curvature in rotational surfaces was studied in \cite{silva,sbr}. From a different viewpoint, the equation \eqref{lw} shows that the surfaces with constant skew curvature belong to the class of Weingarten surfaces, that is, surfaces where there is a functional relation $W(\kappa_1,\kappa_2)=0$ between the principal curvatures. The equation \eqref{lw} is a simple relation where $W(\kappa_1,\kappa_2)$ is linear. The literature on Weingarten surfaces is large and here we refer to \cite{bg,ho,ks,lo,lopa,mo1,pa} without to be a complete list.  

The purpose of this work is twofold. First,  we establish  a variational characterization  of the class of rotational surfaces with constant skew curvature.  This may seem surprising because in general characterizations of variational type are related with some type of energy  defined on the surface, the area likely being the most famous. In our case,   we characterize the profile curves (also called the generating  curves) of these surfaces  as the critical points in a suitable space of curves of an energy associated to the curvature of the curve. The second purpose of this paper is to give a   classification of all possible shapes of surfaces with constant skew curvature in the three 3-space forms. In this classification, we describe the different types of shapes and we will show some pictures of these surfaces. 

The scheme of this paper is the following. Let $N^2\subset\m^3(\rho)$ be a rotational surface and let $\gamma$ be the profile curve   of $N^2$. We view  $\gamma$ as a  curve contained in a totally geodesic surface of $\m^3(\rho)$ which is identified with a $2$-space form $\m^2(\rho)$. We will prove in Section \ref{sec2} that if $N^2$ has constant skew curvature $c>0$, then $\gamma$ is a critical point of the curvature energy
\begin{equation}\label{en-in}
\mathbf{\Theta}_\mu(\gamma)=\int_\gamma e^{\,\mu\kappa}\,ds
\end{equation}
for $\mu=-1/c$, where the space of curves of the variation is formed by (planar) curves contained in $\m^2(\rho)$. The Euler-Lagrange equation associated to $\mathbf{\Theta}_\mu$ is
\begin{equation}\label{eq-an}
\frac{d^2}{ds^2}\left( e^{\,\mu\kappa}\right)+\left(\kappa^2-\frac{\kappa}{\mu}+\rho\right) e^{\,\mu\kappa}=0.
\end{equation} 

Moreover, in Theorem \ref{converse}, we give the converse process by producing rotational surfaces with constant skew curvature by means of a critical point $\gamma \subset\m^2(\rho)$ of the energy functional $\mathbf{\Theta}_\mu$ with $\mu\not=0$. Exactly, we associate to $\gamma$ the Killing vector field in $\m^3(\rho)$ which uniquely extends the vector field along $\gamma$ defined by $\mathcal{I}=\mu e^{\,\mu\kappa}\,B$ in the direction of the (constant) binormal $B$ of $\gamma$. Then the immersion determined by the flow of the extension of $\mathcal{I}$ defines a rotational surface $N^2$ in $\m^3(\rho)$ such that $\gamma$ is a profile curve and $N^2$ has constant skew curvature $\mathcal{S}_k=\lvert-1/\mu\rvert$.
 
In Section \ref{sec3}, we study some qualitative properties of the critical points of $\mathbf{\Theta}_\mu$ that will simplify the work for the classification of the subsequent section. For this, we will study the orbits of the phase plane of an autonomous system associated to the critical points. Finally, in  Section \ref{sec4}, we give a description of the critical curves of $\mathbf{\Theta}_\mu$ in $\m^2(\rho)$. This gives rise to the classification of spherical rotational constant skew curvature surfaces, which depends on whether the surface meets or does not meet the axis of rotation $\zeta$. All these surfaces are smooth but for the points where they meet $\zeta$. In these points the surface is of class $\mathcal{C}^1$. The classification is the following:

\begin{enumerate}
\item Case that the surface meets (necessarily orthogonally) the axis $\zeta$.
\begin{enumerate}
\item Ovaloids. Convex surfaces whose shapes are like an oblate spheroid. We refer to their profile curves as oval type.
\item Vesicle type surfaces. Embedded closed surfaces where the two poles of the profile curves $\gamma$ are very close. The profile curves, usually referred as to simple biconcave type curves, present two inflection points.
\item Pinched spheroids. Limit case of vesicle type surfaces when the two poles coincide. The profile curves are going to be called figure-eight type curves.
\item Immersed spheroids. Closed surfaces that appear when the two poles pass theirselves through the axis $\zeta$. In this case, the profile curves are called  non-simple biconcave type curves.
\end{enumerate}
\item Case that  the surface does not meet the axis of rotation $\zeta$.
\begin{enumerate}

\item Cylindrical anti-nodoid type surfaces. Non embedded surfaces asymptotic to a circular cylinder. The profile curves $\gamma$, called borderline type curves, have a single loop facing away from the axis.
\item Anti-nodoid type surfaces. Non embedded surfaces that are periodic in the direction of the axis $\zeta$ and whose profile curves $\gamma$ have loops facing away from the axis. The profile curves are called orbit-like type curves.
\end{enumerate}
\end{enumerate}

\begin{figure}[hbtp]
\begin{center}
\includegraphics[width=.32\textwidth,height=0.15\textwidth]{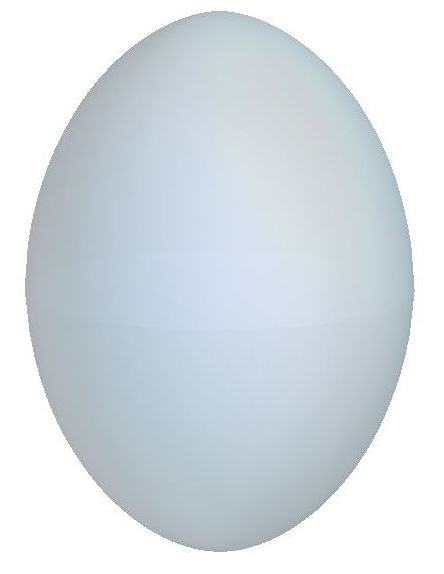}\,\,\,\includegraphics[width=.32\textwidth]{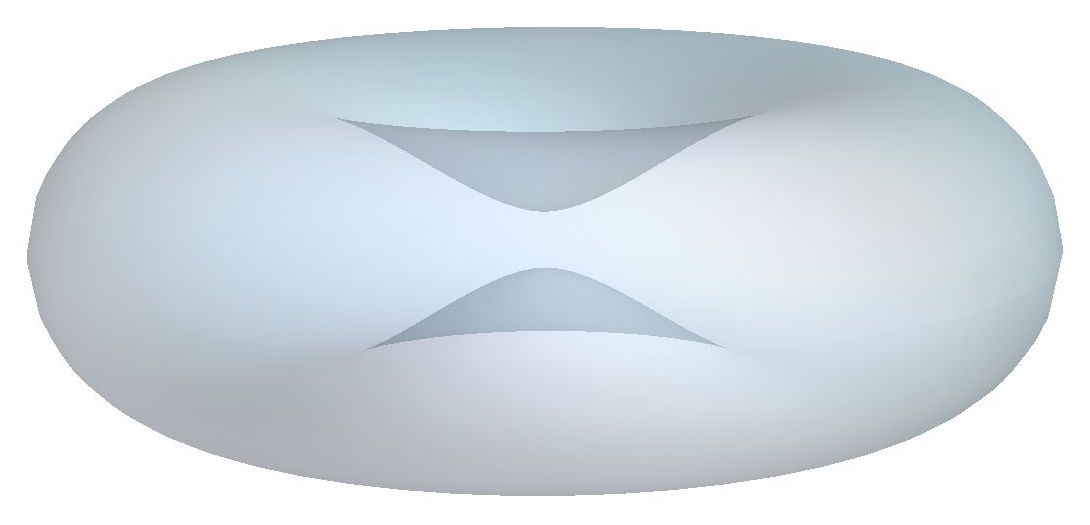}\,\,\,\includegraphics[width=.32\textwidth]{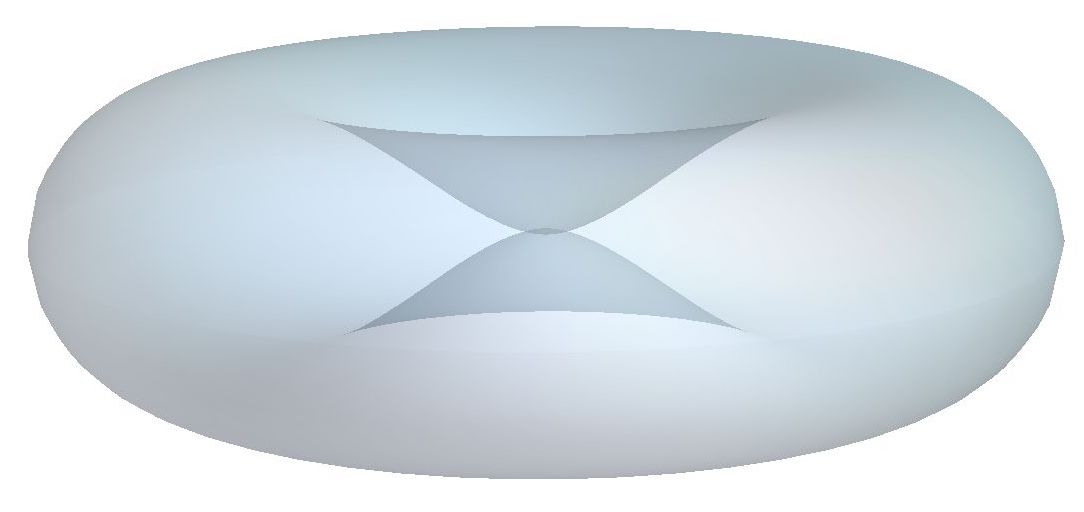}\\\vspace{0.5cm}\includegraphics[width=.32\textwidth,height=0.156\textwidth]{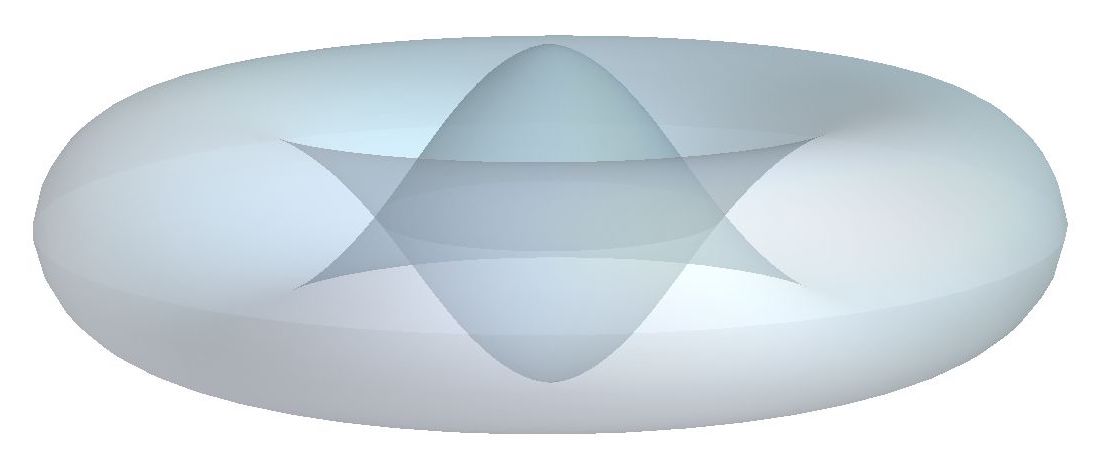}\,\,\,\includegraphics[width=.32\textwidth,height=0.16\textwidth]{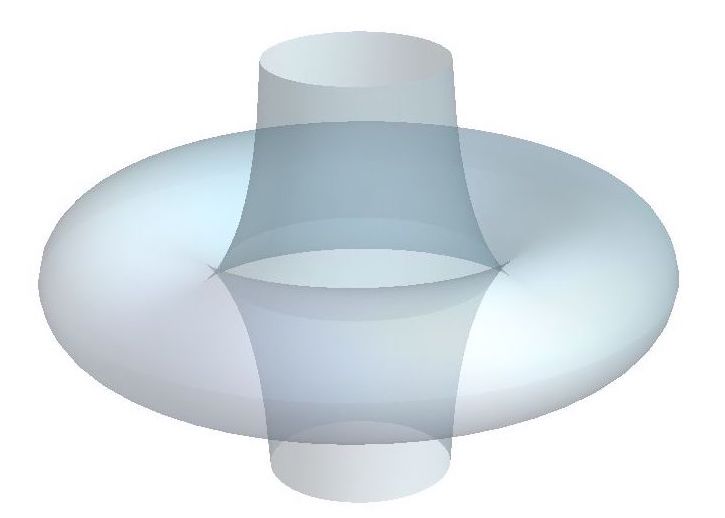}\,\,\,\includegraphics[width=.32\textwidth]{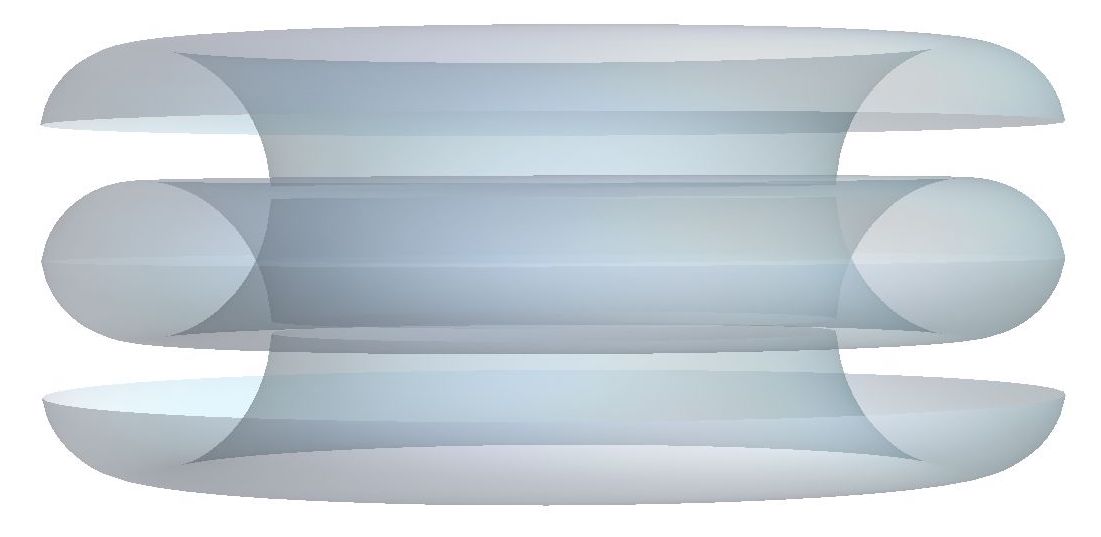}
\end{center}
\caption{Non-isoparametric rotational surfaces with constant skew curvature in the Euclidean space $\r^3$. From left to right: ovaloid, vesicle type, pinched spheroid, immersed spheroid, cylindrical anti-nodoid and anti-nodoid type surface.}
\label{surfacesR3}
\end{figure}

%%%%%%%%%%%%%%%%%%%%%%%%%%%%%
\section{Variational characterization of profile curves}\label{sec2}
%%%%%%%%%%%%%%%%%%%%%%%%%%%%%

As we have announced in the Introduction, the first goal of this paper is the variational characterization of the profile curves of rotational surfaces with constant skew curvature. In this section, we address this problem and in a first step, we prove in Theorem \ref{t1} that the profile curves satisfy the Euler-Lagrange equation of a curvature energy $\mathbf{\Theta}_\mu$, \eqref{en-in}. In a second step, we analyze in Proposition \ref{pr32} the critical points of this functional in a class of curves of $2$-dimensional space forms $\m^2(\rho)$ deriving a first integral of the Euler-Lagrange equation. Finally, we show in Theorem \ref{converse} a geometric construction for the converse process.

%%%%%%%%%%%%%%%%%%%%%%%%%%%%%
\subsection{Criticality of the profile curves}
%%%%%%%%%%%%%%%%%%%%%%%%%%%

Let $\m^3(\rho)$ be a $3$-space form and let $\langle,\rangle$ and $\widetilde{\nabla}$  denote the metric and the Levi-Civita connection on $\m^3(\rho)$, respectively.  Let  $\varphi:N^2\rightarrow \m^3(\rho)$ be an isometric immersion of  a surface $N^2$. We   identify $N^2$ by its image $\varphi(N^2)$ in $\m^3(\rho)$. If  $\nabla$ is the Levi-Civita connection of $\varphi$,   the   Gauss and Weingarten formulae are, respectively,
\begin{eqnarray}
\widetilde{\nabla}_X Y&=&\nabla_X Y+h\left(X,Y\right)\label{gauss}\\
\widetilde{\nabla}_X \eta&=&-A_\eta X+D_X^\perp \eta\,, \label{weingarten}
\end{eqnarray}
for any two tangent vector fields $X$ and $Y$ of  $N^2$. Here $h$ denotes the second fundamental form, $D^\perp$ is the connection on the normal bundle and $A_\eta$ stands for the  shape operator, where $\eta$ is a normal vector field to $N^2$. The  principal curvatures $\kappa_1$ and $\kappa_2$ are the eigenvalues of $A_\eta$. Then the mean curvature is defined by $H=\mbox{trace}(A_\eta)/2$ and, by the Gauss equation, the intrinsic Gaussian curvature is $K=\rho+\mbox{det}(A_\eta)$. This gives the expressions of $H$ and $K$ in \eqref{H}.
 
We now introduce the notion of rotationally symmetric surfaces in $\m^3(\rho)$. An immersed surface $N^2$ in $\m^3(\rho)$ is said to be a rotational surface if it is invariant under the action of a one-parameter group of rotations of $\m^3(\rho)$. This group of rotations leaves pointwise fixed a geodesic $\zeta$ called the rotation axis. When the group of all rotations with the same axis is isomorphic to $SO(2)\cong \mathbb{S}^1$, and consequently the orbits are circles, we say that $N^2$ is a spherical rotational surface. In the Euclidean space $\mathbb{R}^3$ and in the sphere $\mathbb{S}^3(\rho)$, all rotational surfaces are of this type. However,  in the hyperbolic space $\mathbb{H}^3(\rho)$, besides the spherical rotations, there are hyperbolic rotations and parabolic rotations. 

Let $N^2$ be a rotational surface invariant under the action of a one-parameter group of rotations $\{\phi_t\,:\,t\in\mathbb{R}\}$. A profile curve of $N^2$ is any curve $\gamma=\gamma(s)$, $s\in I\subset\r$, that is orthogonal to the orbits of the group of rotations. Then  $N^2$ can be parametrized as
\begin{equation}\label{ss}
N^2=\{\varphi(s,t)=\phi_{t} \left(\gamma(s)\right)\,:\,s\in I, t\in\mathbb{R}\}.
\end{equation}
Observe that $\gamma$ is fully contained in a totally geodesic surface of $\m^3(\rho)$, which is identified here with a $2$-space form $\m^2(\rho)$ with the same constant sectional curvature $\rho$.

We characterize the  profile curves of rotational surfaces with constant skew curvature as the critical points of a curvature energy functional.  

\begin{theorem}\label{t1} Let $N^2\subset \m^3(\rho)$ be a non-isoparametric rotational surface with constant skew curvature $\mathcal{S}_k=c>0$. If $\gamma$ is a profile curve of $N^2$, then the curvature $\kappa$ of $\gamma$ satisfies the Euler-Lagrange equation of the exponential type curvature energy
$$\mathbf{\Theta}_\mu(\gamma)=\int_\gamma e^{\,\mu\kappa},$$
where $\mu=-1/c$.
\end{theorem}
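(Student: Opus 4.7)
The plan is to exploit the rotational symmetry of $N^{2}$ to make the two principal curvatures explicit in terms of the profile data, then combine the Gauss and Codazzi equations with the constant skew curvature condition to eliminate everything but $\kappa$, and finally recognize the resulting ODE as the Euler--Lagrange equation of $\mathbf{\Theta}_\mu$ with $\mu=-1/c$.

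I would first write $N^{2}=\{\varphi(s,t)=\phi_{t}(\gamma(s))\}$, with $\gamma$ parametrized by arc length, and set $f(s)=|V|$ where $V=\partial_t$ is the Killing field of the one-parameter group of rotations. The induced metric on $N^{2}$ is $ds^{2}+f(s)^{2}\,dt^{2}$. Since $\gamma$ lies in a totally geodesic copy of $\mathbb{M}^{2}(\rho)$ and $V$ is perpendicular to this surface along $\gamma$, the unit normal of $\gamma$ inside $\mathbb{M}^{2}(\rho)$ is automatically a unit normal of $N^{2}$ along $\gamma$. Consequently $T$ and $V/|V|$ are principal directions, one principal curvature equals the curvature $\kappa$ of $\gamma$ in $\mathbb{M}^{2}(\rho)$, and the other (call it $\psi$) is associated with the orbits.

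Next I would combine the standard structure equations. The Gauss equation in a space form gives $K=\rho+\kappa\psi$, while intrinsically the rotational metric has $K=-f''/f$, so $f''+(\rho+\kappa\psi)f=0$. The Codazzi--Mainardi equation in any space form reduces to $(\nabla_{X}A)(Y)=(\nabla_{Y}A)(X)$ (the ambient-curvature term vanishes since the normal is perpendicular to tangent vectors), and for the orthogonal principal frame at hand this becomes the surface-of-revolution identity $\psi'=(\kappa-\psi)\,f'/f$. Imposing $\kappa_{1}\geq\kappa_{2}$ together with the constant skew curvature condition in the form $\psi=\kappa+c$, Codazzi collapses to $f'/f=-\kappa'/c$; feeding this and its derivative back into the Gauss equation eliminates $f$ entirely and produces the single ODE
\[
(\kappa')^{2}-c\kappa''+c^{2}\kappa^{2}+c^{3}\kappa+c^{2}\rho=0.
\]
Expanding $(e^{\mu\kappa})''=e^{\mu\kappa}(\mu^{2}(\kappa')^{2}+\mu\kappa'')$ inside equation \eqref{eq-an} and plugging in $\mu=-1/c$ reproduces precisely this ODE after clearing $e^{\mu\kappa}/c^{2}$, identifying $\kappa$ as a critical point of $\mathbf{\Theta}_{-1/c}$.

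The main obstacle I anticipate is the sign bookkeeping when constant skew curvature is imposed: since $\mathcal{S}_k=|\kappa_{1}-\kappa_{2}|$, one must decide whether the profile curvature $\kappa$ corresponds to $\kappa_{1}$ or to $\kappa_{2}$, and only the choice $\psi-\kappa=c$ yields the sign $\mu=-1/c$ claimed in the theorem (the opposite labelling would give $\mu=+1/c$). A secondary concern is verifying the Codazzi identity $\psi'=(\kappa-\psi)f'/f$ uniformly in all three space forms and for every type of rotation (including hyperbolic and parabolic rotations in $\mathbb{H}^{3}(\rho)$); fortunately this is painless because the Codazzi equation is insensitive to $\rho$ once the ambient-curvature term is recognized to vanish, and the warped-product metric $ds^{2}+f^{2}dt^{2}$ is the common normal form for all these rotational surfaces.
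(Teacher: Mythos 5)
Your proof is correct, and it reaches the conclusion by a route that differs in a meaningful way from the paper's. The raw ingredients are the same: the warped-product parametrization, the Gauss equation $K=\rho+\kappa\psi=-f''/f$ and the Codazzi identity $\psi'=(\kappa-\psi)f'/f$, which are exactly the paper's \eqref{h22} and \eqref{gc} after the sign dictionary $\kappa=-\kappa_1$, $\psi=-\kappa_2$ (your relation $\psi=\kappa+c$ then coincides with \eqref{red}). The difference is in how they are combined. The paper integrates Codazzi once, recognizes the result \eqref{arriba} as the Euler--Lagrange equation of a general energy $\int_\gamma\left(P(\kappa)+\lambda\right)ds$ with $\dot P=G$, and lets the skew-curvature condition force $c\dot P=-(P+\lambda)$, whose integration \emph{produces} the exponential Lagrangian; the functional is derived, not guessed, and the multiplier $\lambda$ is absorbed along the way. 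You instead eliminate the warping function entirely via $f'/f=-\kappa'/c$ and arrive at the single ODE $(\kappa')^2-c\kappa''+c^2\kappa^2+c^3\kappa+c^2\rho=0$; I checked that multiplying \eqref{EL} by $c^2e^{-\mu\kappa}$ with $\mu=-1/c$ gives precisely this equation, so the identification is valid. Your route is shorter and avoids the auxiliary potential $P$, but it is a verification rather than a derivation: it presupposes the target equation \eqref{EL}, which the paper establishes independently through the first variation computation in Section \ref{sec2}, so no circularity arises. Your closing remark about the sign ambiguity is also on point: the opposite labelling of the principal curvatures yields $\mu=+1/c$, and this is exactly the discrepancy that Proposition \ref{orientation} (reversal of orientation of $\gamma$) is designed to absorb.
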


\begin{proof} Without loss of generality, we assume that $\gamma$ is parametrized by arc-length. For $N^2$, we use the parametrization $\varphi=\varphi(s,t)$ given in  \eqref{ss}. If we fix the variable $t$, the curves $\varphi(-,t)$ are geodesics of $N^2$ because they are congruent generated by evolving $\gamma$ under the group $\{\phi_t\}$. Moreover, these curves are orthogonal to the Killing vector field $\xi$ of the infinitesimal generator of $\{\phi_t\}$ and its curvature $\kappa(s,t)$ is $\kappa(s,t)=\kappa(s)$ for all $t$.

Let $G$ be the length of $\xi$, that is, $G^2(s)=\langle \phi_t,\phi_t\rangle$. Then, not only $G(s)$, but all the involved functions of $N^2$ depend only on the variable $s$ because $\kappa(s,t)=\kappa(s)$. In particular, the principal curvatures of $N^2$ are $\kappa_1(s)=-\kappa(s)$ and $\kappa_2(s)=h_{22}(s)$, respectively, where   
\begin{equation}\label{h22}
h_{22}=\frac{1}{\kappa}\left(\frac{G_{ss}}{G}+\rho\right)
\end{equation}
is the second coefficient of the second fundamental form $h$. See \cite{agp} for details. The subscript index $s$ indicates the derivative with respect to the arc-length parameter  $s$ of $\gamma$. Since $N^2$ is a non-isoparametric surface, it follows from \eqref{lw} that $\kappa_1(s)=-\kappa(s)\neq 0$, hence the curves $\varphi(-,t)$ are not geodesics of the ambient space $\m^3(\rho)$. Thus, we can   consider the Frenet frame of $\varphi(-,t)$, denoted by $\{T(s,t),N(s,t),B(s,t)\}$. 

Recall now that the Gauss-Codazzi equations are the compatibility conditions for the surface $N^2$ (\cite{agp}). In our setting, they reduce into a unique equation, namely 
\begin{equation}\label{gc}
\frac{\partial}{\partial s}\left(\frac{1}{\kappa}\left(G_{ss}+G\left(\kappa^2+\rho\right)\right)\right)-\kappa_s G=0\,.
\end{equation}
On the other hand, using the expressions of the principal curvatures in terms of the parametrization $\varphi$ and equation \eqref{h22}, the relation \eqref{lw} can be rewritten as
\begin{equation}\label{red}
G_{ss}+G\left(\kappa^2+c\kappa+\rho\right)=0\,.
\end{equation}
Since   $\kappa(s)$ is not constant, the Inverse Function Theorem implies that $s$ is a function of $\kappa$. Set $G(\kappa)=\dot{P}(\kappa)$, where the upper dot denotes the derivative with respect to $\kappa$. Now \eqref{gc} and \eqref{red} can be expressed, respectively, as  
\begin{eqnarray}
&&\dot{P}_{ss}+\dot{P}\left(\kappa^2+\rho\right)-\kappa\left(P+\lambda\right)=0 \label{arriba}\\
&&\dot{P}_{ss}+\dot{P}\left(\kappa^2+\rho\right)+c\kappa\dot{P}=0\,, \label{abajo}
\end{eqnarray}
for some real constant $\lambda\in\mathbb{R}$. Equation \eqref{arriba} turns out to be the Euler-Lagrange equation associated to a general curvature energy of the form
$$\mathbf{\Theta}(\gamma)=\int_\gamma\left(P(\kappa)+\lambda\right)ds,$$
where the constant $\lambda$ can be seen as a Lagrange multiplier constraining the length of the curve. By combining equations \eqref{arriba} and \eqref{abajo}, we deduce $c\dot{P}=-(P+\lambda)$. This equation can be integrated obtaining
$$P(\kappa)=e^{-\kappa/c}-\lambda\,,$$
up to a multiplicative constant which has no influence in the result. Hence, we draw the conclusion.
\end{proof}

%%%%%%%%%%%%%%
\subsection{Exponential type curvature energy}
%%%%%%%%%%%%%%

We now investigate the curvature energy $\mathbf{\Theta}_\mu$ that   appeared in  Theorem \ref{t1}. 

\begin{definition} Let $\mu$ be a real constant. For a curve $\gamma:I\rightarrow \m^2(\rho)$ in a $2$-space form $\m^2(\rho)$, we define the exponential type curvature energy 
\begin{equation}
\mathbf{\Theta}_\mu(\gamma)=\int_\gamma e^{\,\mu\kappa}.\label{energy}
\end{equation}
The constant $\mu$ is called the energy index.  
\end{definition}

Notice that if $\mu=0$, the energy $\mathbf{\Theta}_0$ is nothing but the length functional, whose critical points are, if properly parametrized, geodesics of $\m^2(\rho)$. Hence, from now on, we will assume that $\mu\not=0$. 

We compute the first variation formula of the   energy $\mathbf{\Theta}_\mu$ acting on the space of immersed curves in $\m^2(\rho)$. In the next computations, we will see these curves as planar curves in a $3$-space form $\m^3(\rho)$ where $\m^2(\rho)$ is embedded as a totally geodesic surface. 

Let $\gamma=\gamma(s)$ be a non-geodesic curve in $\m^2(\rho)$ which we suppose parametrized by the arc-length parameter $s$, with $s\in [0,L]$, being $L$ the length of $\gamma$. We see $\m^2(\rho)$ embedded in $\m^3(\rho)$.   If $T(s)=\gamma'(s)$ represents the unit tangent vector field, $\widetilde{\nabla}_T T(s)\not=0$ since $\gamma$ is not a geodesic and, hence, $\gamma$ has a well defined Frenet frame   $\{T,N,B\}$. The Frenet equations are
\begin{align*}
\widetilde{\nabla}_T T&=\kappa N\\
\widetilde{\nabla}_T N&=-\kappa T+\tau B\\
\widetilde{\nabla}_T B&=-\tau B\,,
\end{align*}
where $\tau$ is the torsion of $\gamma$. Since $\m^2(\rho)\subset\m^3(\rho)$, the rank of $\gamma$ is $2$ and $\tau=0$. In particular, the binormal $B=T\times N$ is constant on $\gamma$. 

For a sufficiently small $\varepsilon>0$, consider  a variation $\Gamma$ of $\gamma$, $\Gamma:[0,L]\times\left(-\varepsilon,\varepsilon\right)\rightarrow \m^2(\rho)$, $\Gamma=\Gamma(s,t)$ with $\Gamma(s,0)=\gamma(s)$. The variational vector field $W$ along $\gamma$ is $W(s)=\partial \Gamma/\partial t(s,0)$. Let $\Omega_{p_0p_1}$ be the space of regular curves in $\m^2(\rho)$ joining the endpoints $p_0=\gamma(0)$ and $p_1=\gamma(L)$. We consider the curves $\Gamma(-,t)$ parametrized by an arbitrary parameter $\sigma\in [0,1]$, which  are not necessarily parametrized by the arc-length. 

The computation of the first variation formula of $\mathbf{\Theta}_\mu$ involves the Frenet equations and general formulas for variations along the direction of $W$. Here we follow the seminal work of Langer and Singer in \cite{ls}: see also   \cite{agp} for a similar context. The first variation $\delta \mathbf{\Theta}_\mu(\gamma)$ of the curvature energy is 
\[\delta \mathbf{\Theta}_\mu(\gamma)=\frac{\partial}{\partial t}{\Big|}_{t=0}\mathbf{\Theta}_\mu\left(\Gamma(\sigma,t)\right)=\frac{\partial}{\partial t}{\Big|}_{t=0}\int_{0}^{1} e^{\,\mu\kappa} v \,d\sigma= \int_0^1 \left(\mu e^{\,\mu\kappa} v W(\kappa)+e^{\,\mu\kappa} W(v)\right) d\sigma\,,\]
where $v= v(\sigma,t)=\lvert \partial\Gamma/\partial\sigma (\sigma,t)\rvert$. We now differentiate with the aid of the variations of $v$ and $\kappa$ given in \cite[Lem. 1.1]{ls}, namely,
\begin{eqnarray*}
&&W(v)=v\langle \widetilde{\nabla}_TW,T\rangle\\
&&W(\kappa)=\langle \widetilde{\nabla}_T^2W,N\rangle-2\kappa\langle \widetilde{\nabla}_T W,T\rangle+\rho\langle W,N\rangle\,,
\end{eqnarray*}
and reparametrize by arc-length to obtain
\begin{eqnarray*}
\delta \mathbf{\Theta}_\mu(\gamma)&=&\int_0^L \Big(\mu e^{\,\mu\kappa} \left( \langle \widetilde{\nabla}_T^2W,N\rangle-2\kappa\langle \widetilde{\nabla}_TW,T\rangle+\rho\langle W,N\rangle\right)+e^{\,\mu\kappa} \langle \widetilde{\nabla}_TW,T\rangle \Big) ds\\
&=&\int_0^L e^{\,\mu\kappa} \Big(\mu\langle \widetilde{\nabla}_T^2W,N\rangle+\left(1-2\mu\kappa\right)\langle \widetilde{\nabla}_TW,T\rangle +\rho\mu\langle W, N\rangle\Big)ds\\
&=&\int_0^L \langle \widetilde{\nabla}_T^2\left(\mu e^{\,\mu\kappa}N\right)-\widetilde{\nabla}_T\left(\left(1-2\mu\kappa\right)e^{\,\mu\kappa}T\right)+\rho\mu N,W\rangle\,ds+\mathcal{B}\left(W,\gamma\right).
\end{eqnarray*}

Last equality holds after integrating by parts, where the boundary terms are included in the operator $\mathcal{B}\left(W,\gamma\right)$. If we define $\mathcal{E}(\gamma)$ as
$$\mathcal{E}(\gamma)=\frac{d^2}{ds^2}\left(e^{\,\mu\kappa}\right)+\left(\kappa^2-\frac{\kappa}{\mu}+\rho\right)e^{\,\mu\kappa},$$
then the expression $\delta\mathbf{\Theta}_\mu(\gamma)$ simplifies into
$$\delta \mathbf{\Theta}_\mu(\gamma)= \mu\int_0^L \mathcal{E}(\gamma)\langle W,N\rangle\,ds+\mathcal{B}\left(W,\gamma\right).$$
Regardless of the boundary terms, by standard arguments, a critical point for the energy $\mathbf{\Theta}_\mu$ must satisfy the equation $\mathcal{E}(\gamma)=0$, 
obtaining the announced Euler-Lagrange equation  \eqref{eq-an}.

\begin{proposition} \label{pr32}
Let $\gamma=\gamma(s)$ be a non-geodesic curve in $\m^2(\rho)$ parametrized by arc-length. If $\gamma$ is a critical point of $\mathbf{\Theta}_\mu$, then  
\begin{equation}
\frac{d^2}{ds^2}\left( e^{\,\mu\kappa}\right)+\left(\kappa^2-\frac{\kappa}{\mu}+\rho\right) e^{\,\mu\kappa}=0. \label{EL}
\end{equation}
This  equation  is called the Euler-Lagrange equation of $\mathbf{\Theta}_\mu$. Regardless of the boundary conditions, any curve satisfying \eqref{EL} is going to be called a critical curve of $\mathbf{\Theta}_\mu$.
\end{proposition}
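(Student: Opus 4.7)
The plan is to derive the Euler--Lagrange equation by computing the first variation of $\mathbf{\Theta}_\mu$ along an arbitrary smooth variation $\Gamma:[0,L]\times(-\varepsilon,\varepsilon)\to\m^2(\rho)$ with $\Gamma(\cdot,0)=\gamma$ and variational field $W=\partial_t\Gamma|_{t=0}$, and then invoking the fundamental lemma of the calculus of variations. First I would parametrize the deformed curves by a fixed parameter $\sigma\in[0,1]$ (so that the $t$-derivative commutes with the $\sigma$-integral, since arc-length depends on $t$) and set $v(\sigma,t)=|\partial_\sigma\Gamma|$, obtaining
$$\mathbf{\Theta}_\mu(\Gamma(\cdot,t))=\int_0^1 e^{\mu\kappa(\sigma,t)}\,v(\sigma,t)\,d\sigma.$$
Differentiating under the integral at $t=0$ and using the product rule gives
$$\delta\mathbf{\Theta}_\mu(\gamma)=\int_0^1\bigl(\mu e^{\mu\kappa}v\,W(\kappa)+e^{\mu\kappa}W(v)\bigr)\,d\sigma.$$

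Next I would substitute the Langer--Singer variation formulas for $W(v)$ and $W(\kappa)$ valid in a space form (recalled just above the proposition), and reparametrize by arc-length to produce
$$\delta\mathbf{\Theta}_\mu(\gamma)=\int_0^L e^{\mu\kappa}\Bigl(\mu\langle\widetilde{\nabla}_T^2 W,N\rangle+(1-2\mu\kappa)\langle\widetilde{\nabla}_T W,T\rangle+\rho\mu\langle W,N\rangle\Bigr)\,ds.$$
The core computational step is to integrate by parts twice on the $\widetilde{\nabla}_T^2 W$ term and once on the $\widetilde{\nabla}_T W$ term, using the planar Frenet relations $\widetilde{\nabla}_T T=\kappa N$, $\widetilde{\nabla}_T N=-\kappa T$ (the torsion vanishes since $\gamma$ lies in the totally geodesic slice $\m^2(\rho)$) and $\widetilde{\nabla}_T B=0$, while collecting all pointwise endpoint contributions into a boundary operator $\mathcal{B}(W,\gamma)$.

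The main technical obstacle is the bookkeeping in this double integration by parts. One must expand $\widetilde{\nabla}_T^{\,2}\bigl(\mu e^{\mu\kappa}N\bigr)$ and $\widetilde{\nabla}_T\bigl((1-2\mu\kappa)e^{\mu\kappa}T\bigr)$ component by component along $\{T,N,B\}$, and verify that the tangential contributions cancel exactly (reflecting the reparametrization invariance of the functional, which should produce no constraint on $\langle W,T\rangle$) and that no binormal contribution survives. The remaining normal coefficient rearranges, after a short manipulation, into $\mu\,\mathcal{E}(\gamma)$, where
$$\mathcal{E}(\gamma)=\frac{d^2}{ds^2}\!\left(e^{\mu\kappa}\right)+\left(\kappa^2-\frac{\kappa}{\mu}+\rho\right)e^{\mu\kappa},$$
so that $\delta\mathbf{\Theta}_\mu(\gamma)=\mu\int_0^L \mathcal{E}(\gamma)\langle W,N\rangle\,ds+\mathcal{B}(W,\gamma)$.

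Finally, restricting to variations whose normal component $\langle W,N\rangle$ is smooth and compactly supported in $(0,L)$ annihilates the boundary operator $\mathcal{B}(W,\gamma)$, and then the fundamental lemma of the calculus of variations, together with the standing hypothesis $\mu\neq 0$, forces $\mathcal{E}(\gamma)\equiv 0$ along $\gamma$. This is precisely equation~\eqref{EL}, completing the proof.
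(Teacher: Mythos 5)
Your proposal is correct and follows essentially the same route as the paper: differentiating under the integral with a fixed parameter $\sigma$, substituting the Langer--Singer formulas for $W(v)$ and $W(\kappa)$, integrating by parts to isolate $\mu\,\mathcal{E}(\gamma)\langle W,N\rangle$ plus a boundary operator $\mathcal{B}(W,\gamma)$, and concluding by the fundamental lemma. The only (harmless) additions are your explicit remarks on the vanishing of the tangential and binormal contributions and on compactly supported variations, which the paper subsumes under ``standard arguments.''
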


A first property is that the constant $\mu$ can be chosen to be positive after a change of orientation of the critical curve $\gamma$, if necessary.
 
\begin{proposition}\label{orientation} If $\gamma\subset \m^2(\rho)$ is a critical curve for $\mathbf{\Theta}_\mu$, then the curve $\widetilde{\gamma}$ obtained by reversing the orientation of $\gamma$ is a critical curve for $\mathbf{\Theta}_{\widetilde{\mu}}$ with $\widetilde{\mu}=-\mu$.
\end{proposition}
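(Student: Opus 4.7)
The plan is to reduce the statement to a direct substitution in the Euler-Lagrange equation \eqref{EL}, using the fact that the signed curvature of a planar curve flips sign under orientation reversal. Since both $\gamma$ and $\widetilde{\gamma}$ lie in the same totally geodesic surface $\m^2(\rho)\subset\m^3(\rho)$, and the binormal $B$ of a planar curve is a constant vector field that determines the orientation of that surface, I would fix the Frenet frame of $\widetilde{\gamma}$ by imposing $\widetilde{B}=B$. Writing $\widetilde{s}=L-s$, this forces $\widetilde{T}=-T$, and from $\widetilde{B}=\widetilde{T}\times\widetilde{N}=T\times N$ one reads off $\widetilde{N}=-N$. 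Computing then $\widetilde{\nabla}_{\widetilde{T}}\widetilde{T}=\widetilde{\nabla}_{T}T=\kappa N=-\kappa\widetilde{N}$ yields the key identity $\widetilde{\kappa}=-\kappa$.

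With this, the verification becomes purely algebraic. Setting $\widetilde{\mu}=-\mu$ gives $\widetilde{\mu}\widetilde{\kappa}=\mu\kappa$, so $e^{\widetilde{\mu}\widetilde{\kappa}}=e^{\mu\kappa}$. The chain rule yields $d/d\widetilde{s}=-d/ds$, whence $d^2/d\widetilde{s}^2=d^2/ds^2$. Finally $\widetilde{\kappa}^2=\kappa^2$ and $\widetilde{\kappa}/\widetilde{\mu}=\kappa/\mu$. Substituting these identities into \eqref{EL} written for the pair $(\widetilde{\gamma},\widetilde{\mu})$ reproduces exactly \eqref{EL} for $(\gamma,\mu)$, which holds by hypothesis. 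Hence $\widetilde{\gamma}$ is a critical curve of $\mathbf{\Theta}_{-\mu}$.

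As a sanity check, one can draw the conclusion directly at the level of the functional \eqref{energy}: the change of variable $s\mapsto L-s$, together with $\widetilde{\kappa}=-\kappa$, gives $\mathbf{\Theta}_{-\mu}(\widetilde{\gamma})=\mathbf{\Theta}_\mu(\gamma)$, and since this identity is preserved under the natural correspondence between variations of $\gamma$ and of $\widetilde{\gamma}$, criticality transfers between the two energies. The only point requiring a little care throughout is the bookkeeping of sign conventions in the Frenet frame — in particular, the choice that preserves the ambient binormal and therefore the orientation of the totally geodesic surface containing the curve. Once that convention is fixed, the argument is essentially immediate, and I do not anticipate any genuine obstacle.
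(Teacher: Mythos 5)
Your proof is correct and follows essentially the same route as the paper's: reverse the orientation, observe that the signed curvature changes sign, and substitute into the Euler--Lagrange equation \eqref{EL} to recover the equation for the index $-\mu$. The extra care you take in fixing the Frenet conventions (keeping $B$ fixed so that $\widetilde{\kappa}=-\kappa$) is a detail the paper leaves implicit, but it does not change the argument.
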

\begin{proof} Let $\widetilde{\gamma}(s)=\gamma(-s)$. Then the curvature $\widetilde{\kappa}$ of $\widetilde{\gamma}$ is $\widetilde{\kappa}(s)=-\kappa(-s)$. By substituting in \eqref{EL}, we find
$$\frac{d^2}{ds^2}\left(e^{-\mu\widetilde{\kappa}}\right)+\left(\widetilde{\kappa}^2+\frac{\widetilde{\kappa}}{\mu}+\rho\right) e^{-\mu\widetilde{\kappa}}=0.$$
Thus  $\widetilde{\gamma}$ satisfies the Euler-Lagrange equation of $\mathbf{\Theta}_{\widetilde{\mu}}$ for the energy index $\widetilde{\mu}=-\mu$.      
\end{proof}

In what follows, we will assume that  the energy index $\mu$ is positive. Then, in the following result, we analyze the solutions of the Euler-Lagrange equation with constant curvature.

\begin{proposition}\label{constant} 
Let $\gamma$ be a curve in $\m^2(\rho)$ with constant curvature   satisfying the Euler-Lagrange equation \eqref{EL}. \begin{enumerate}
\item Case $\mathbb{R}^2$. Then $\gamma$ is either a geodesic curve   or a circle of radius $\mu$.
\item Case $\mathbb{S}^2(\rho)$. Then   $0<2\sqrt{\rho}\,\mu \leq1$. If $ 2\sqrt{\rho}\,\mu <1$, there are two solutions corresponding with two parallels. If $2\sqrt{\rho}\,\mu=1$, the solution is a circle with curvature $\sqrt{\rho}$.
\item Case $\mathbb{H}^2(\rho)$. Then $\gamma$ is either a circle or a hypercycle.
\end{enumerate}
\end{proposition}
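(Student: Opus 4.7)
The plan is to substitute a constant curvature directly into the Euler--Lagrange equation \eqref{EL} and then read off, in each ambient geometry, which constant curvature curves satisfy the resulting algebraic condition. Since $\kappa$ is constant, the derivative $\tfrac{d^2}{ds^2}(e^{\mu\kappa})$ vanishes and, as $e^{\mu\kappa}>0$, equation \eqref{EL} collapses to the quadratic condition
\begin{equation*}
\kappa^2-\frac{\kappa}{\mu}+\rho=0,
\end{equation*}
whose roots are $\kappa_\pm=\bigl(1\pm\sqrt{1-4\rho\mu^2}\bigr)/(2\mu)$. All three cases of the proposition will follow from analyzing these roots together with the classical classification of curves with constant geodesic curvature in the corresponding $2$-space form.

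First I would handle the Euclidean case $\rho=0$: the discriminant is $1$, giving $\kappa\in\{0,1/\mu\}$, which correspond respectively to straight lines (geodesics of $\mathbb{R}^2$) and to circles of radius $\mu$. Next, for the spherical case $\rho>0$, existence of a real solution forces $1-4\rho\mu^2\geq 0$, i.e.\ $0<2\sqrt{\rho}\,\mu\leq 1$; strict inequality yields two distinct positive roots, and equality yields the double root $\kappa=1/(2\mu)=\sqrt{\rho}$. In $\mathbb{S}^2(\rho)$ every curve of constant geodesic curvature is a parallel circle, so the two (respectively one) admissible values of $\kappa$ correspond to two (respectively one) parallels.

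The main point requiring care is the hyperbolic case $\rho<0$, where the discriminant $1-4\rho\mu^2>1$ produces two real roots of opposite signs, $\kappa_+>0>\kappa_-$, and one must decide whether each root corresponds to a geodesic, a circle, a horocycle, or a hypercycle. The trick is to rewrite the constraint as $\kappa^2+\rho=\kappa/\mu$, so that the sign of $\kappa^2-(-\rho)$ equals the sign of $\kappa/\mu$. With $\mu>0$ (by Proposition \ref{orientation}) this yields $\kappa_+^{\,2}>-\rho$ and $\kappa_-^{\,2}<-\rho$; recalling that in $\mathbb{H}^2(\rho)$ a constant curvature curve is a circle when $|\kappa|>\sqrt{-\rho}$, a horocycle when $|\kappa|=\sqrt{-\rho}$, and a hypercycle when $0<|\kappa|<\sqrt{-\rho}$, we conclude that $\kappa_+$ gives a circle and $\kappa_-$ gives a hypercycle (neither root can be zero nor of modulus $\sqrt{-\rho}$, as is immediate from the same identity).

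The hardest piece of the argument is not the algebra but the last geometric identification: one must remember the trichotomy circle/horocycle/hypercycle in $\mathbb{H}^2(\rho)$ and verify that the identity $\kappa^2+\rho=\kappa/\mu$ really rules out the boundary case $\kappa^2=-\rho$ (which it does, since it would force $\kappa=0$ and hence $\rho=0$, contradicting $\rho<0$). Once this observation is in place, the statement falls out case by case with no further computation.
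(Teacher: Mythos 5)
Your proposal is correct and follows essentially the same route as the paper: substitute constant $\kappa$ into \eqref{EL}, reduce to the quadratic $\kappa^2-\kappa/\mu+\rho=0$, and identify the roots geometrically in each space form. Your identity $\kappa^2+\rho=\kappa/\mu$ is a slightly more explicit way of obtaining the sign comparisons $(\kappa_0^{+})^2>-\rho$ and $(\kappa_0^{-})^2<-\rho$ that the paper simply asserts in the hyperbolic case, but the argument is the same.
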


\begin{proof}
If the curvature $\kappa$ of $\gamma$ is constant, we deduce from \eqref{EL} that $\kappa^2-\kappa/\mu+\rho=0$. If they exist, the two solutions $\kappa_0^+$ and $\kappa_0^{-}$ of this equation, which may coincide, are 
$$\kappa_0^{\pm}=\frac{1\pm \sqrt{1-4\rho\mu^2}}{2\mu}\,.$$
In particular, for the existence, $4\rho\mu^2\leq 1$ must hold and we have the following cases depending on the sign of $\rho$.
\begin{enumerate}
\item Case $\mathbb{R}^2$. Then $\kappa_0^{-}=0$ and $\gamma$ is a straight line and $\kappa_0^+=1/\mu$ and $\gamma$ is a circle of radius $\mu$.
\item Case $\mathbb{S}^2(\rho)$. We have the restriction that $1-4\rho\mu^2\geq 0$. Then $\kappa_0^+$ and $\kappa_0^-$ are both positive and  correspond with two circles (parallels after suitable rigid motion) if $2\sqrt{\rho}\,\mu<1$, or just one if equality holds. In the latter, $\kappa_0^+=\kappa_0^-=\sqrt{\rho}$.
\item Case $\mathbb{H}^2(\rho)$. Then $1-4\rho\mu^2> 0$, hence we always have two different solutions. For $\kappa_0^+$, we have  $\left(\kappa_0^+\right)^2>-\rho$ and this proves that the solution corresponds with an (Euclidean) circle. For $\kappa_0^{-}$, we have $\left(\kappa_0^-\right)^2<-\rho$, and the solution is a hypercycle.
\end{enumerate}
\end{proof}

We finish this subsection obtaining a  first integral of  the Euler-Lagrange equation \eqref{EL}.

\begin{proposition} \label{pr-first}
If $\gamma$ is a critical curve of $\mathbf{\Theta}_\mu$ with non constant curvature $\kappa$, then there is a constant $d\in\r$ such that 
\begin{equation}\label{fi}
\mu^4\kappa_s^2=d e^{-2\mu\kappa}-\left(\mu\kappa-1\right)^2-\rho\mu^2\,.
\end{equation}
\end{proposition}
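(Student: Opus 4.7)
The plan is to exploit that the Euler-Lagrange equation \eqref{EL} is autonomous: its coefficients depend only on $\kappa$ and never on $s$ explicitly. Whenever a second-order ODE of the form $u_{ss} + g(\kappa)u = 0$ has this structure, multiplying by the appropriate derivative produces a conserved ``energy''. Concretely, set $u := e^{\mu\kappa}$, so $u_s = \mu\kappa_s\, e^{\mu\kappa}$ and $u\,u_s = \mu\kappa_s\, e^{2\mu\kappa}$. Equation \eqref{EL} reads $u_{ss} + (\kappa^2 - \kappa/\mu + \rho)u = 0$. Multiplying by $u_s$ gives
$$\tfrac{1}{2}\tfrac{d}{ds}(u_s^2) + \mu(\kappa^2 - \kappa/\mu + \rho)e^{2\mu\kappa}\kappa_s = 0,$$
and the second summand is a total $s$-derivative $F(\kappa)_s$, where $F'(\kappa) = \mu(\kappa^2 - \kappa/\mu + \rho)e^{2\mu\kappa}$.

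The next step is to produce $F$ explicitly. Either two successive integrations by parts or the ansatz $F(\kappa) = (A\kappa^2 + B\kappa + C)e^{2\mu\kappa}$ with coefficients matched against $F'(\kappa) = \mu(\kappa^2 - \kappa/\mu + \rho)e^{2\mu\kappa}$ will yield, up to an additive constant,
$$F(\kappa) = \frac{e^{2\mu\kappa}}{2\mu^2}\bigl((\mu\kappa - 1)^2 + \rho\mu^2\bigr).$$
This is the main computational step and the only real obstacle; once a correct ansatz is written down, the identification is mechanical and can be checked by differentiation.

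Integrating the resulting exact equation gives a conservation law
$$\tfrac{1}{2}u_s^2 + F(\kappa) = c_0$$
for some constant $c_0 \in \mathbb{R}$. Substituting $u_s^2 = \mu^2\kappa_s^2\, e^{2\mu\kappa}$ and the explicit form of $F$, multiplying both sides through by $2\mu^2 e^{-2\mu\kappa}$, and setting $d := 2\mu^2 c_0$, the assertion
$$\mu^4\kappa_s^2 = d\, e^{-2\mu\kappa} - (\mu\kappa - 1)^2 - \rho\mu^2$$
falls out. The hypothesis that $\kappa$ is non-constant plays no role in the derivation itself; it is only needed downstream, to guarantee that \eqref{fi} provides a non-trivial quadrature for $\kappa$ as a function of $s$ (rather than being the trivial identity $0 = 0$ obtained when $\kappa_s \equiv 0$, a case already handled in Proposition \ref{constant}).
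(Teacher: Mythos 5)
Your proposal is correct and follows essentially the same route as the paper: both multiply the Euler--Lagrange equation by $(e^{\mu\kappa})_s$ and recognize the result as an exact derivative, the only cosmetic difference being that the paper finds the antiderivative via the identity $\kappa\,(e^{\mu\kappa})_s=\bigl((\kappa-1/\mu)e^{\mu\kappa}\bigr)_s$ while you obtain the same $F(\kappa)$ by an ansatz with undetermined coefficients. Your closing remark about the role of the non-constancy hypothesis is also consistent with how the paper uses it.
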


\begin{proof}  Since the curvature $\kappa(s)$ is not constant, the derivative of $e^{\,\mu\kappa}$ with respect to $s$ does not vanish identically. We multiply equation \eqref{EL} by twice this derivative, obtaining 
\begin{eqnarray*}
0&=&2\left(e^{\,\mu\kappa}\right)_s\left(e^{\,\mu\kappa}\right)_{ss}+2\left(\kappa^2-\frac{\kappa}{\mu}\right) e^{\,\mu\kappa}\left(e^{\,\mu\kappa}\right)_s+2\rho e^{\,\mu\kappa}\left(e^{\,\mu\kappa}\right)_s\\ 
&=&2\left(e^{\,\mu\kappa}\right)_s\left(e^{\,\mu\kappa}\right)_{ss}+2\left(\kappa-\frac{1}{\mu}\right)e^{\,\mu\kappa}\left(\left(\kappa-\frac{1}{\mu}\right)e^{\,\mu\kappa}\right)_s+2\rho e^{\,\mu\kappa}\left(e^{\,\mu\kappa}\right)_s\\ 
&=&\left(\left(\left(e^{\,\mu\kappa}\right)_s\right)^2+\left(\kappa-\frac{1}{\mu}\right)^2 e^{2\mu\kappa}+\rho e^{2\mu\kappa}\right)_s=\frac{1}{\mu^2}\left(\left(\mu^4\kappa_s^2+\left(\mu\kappa-1\right)^2+\rho\mu^2\right)e^{2\mu\kappa}\right)_s.
\end{eqnarray*}
 Therefore,  there exists a constant $d$ such that 
$$\left(\mu^4\kappa_s^2+\left(\mu\kappa-1\right)^2+\rho\mu^2\right)e^{2\mu\kappa}=d,$$
concluding with \eqref{fi}.  
\end{proof}

%%%%%%%%%%%%%%%%
\subsection{Geometric construction}
%%%%%%%%%%%%%%%

We finish this section showing a geometric method of constructing all rotational surfaces with constant skew curvature in 3-space forms. The idea is to use a critical curve $\gamma$ of the curvature functional $\mathbf{\Theta}_\mu$ and to evolve it in the direction of its binormal vector field with a prescribed velocity. Here we use the theory of binormal flow developed in \cite{agp} and which is based on a nice result of Langer and Singer  about Killing vector fields along $\gamma$ (\cite{ls}).  

Let $\gamma$ be a solution of the Euler-Lagrange equation \eqref{EL} parametrized by arc-length and consider $\m^2(\rho)$ isometrically immersed in $\m^3(\rho)$ as a totally geodesic surface. By Proposition \ref{orientation}, we may assume that $\mu>0$. Along $\gamma$,  define the vector field
\begin{equation}\label{I}
\mathcal{I}=\mu e^{\,\mu\kappa}\,B,
\end{equation}
  where $B$ denotes the (constant) binormal vector field of $\gamma$. Following \cite{ls}, $\mathcal{I}$ is a Killing vector field along $\gamma$ in the sense that $\gamma$ evolves in the direction of $\mathcal{I}$ without changing shape, only position. The Killing vector fields along curves can uniquely be extended to Killing vector fields on the whole ambient space, hence,  we extend $\mathcal{I}$ along $\gamma$ on $\m^3(\rho )$ and denote it by $\mathcal{I}$ again. 
Since $\m^3(\rho)$ is complete, let $\{\phi_t\,:\,t\in\mathbb{R}\}$ be the one-parameter group of isometries determined by the flow of $\mathcal{I}$. Define the \emph{binormal evolution surface} as 
\begin{equation}\label{nn}
N^2_\gamma=\{\varphi(s,t)=\phi_t(\gamma(s))\,:\, s\in I, t\in\mathbb{R}\}.
\end{equation}
Then $N^2_\gamma$ is an $\mathcal{I}$-invariant surface which is foliated by congruent copies of $\gamma$, $\gamma_t(s)=\phi_t(\gamma(s))$. Since $\phi_t$ are isometries of $\m^3(\rho)$, the curvature of any curve $\varphi(-,t)$ is   $\kappa(s)$. Similarly, all the curves   $\varphi(-,t)$ have zero torsion if $\gamma$ is not a geodesic in $\m^3(\rho)$. If    $\kappa$ is constant, then $N^2_\gamma$ is a flat isoparametric surface and if $\kappa$ is not constant, then $N^2_\gamma$ is a rotational surface of $\m^3(\rho)$: see  \cite{agp}.

We are in conditions to   prove the converse of Theorem \ref{t1}. 

\begin{theorem}\label{converse}  
 If $\gamma$ is  a critical curve for $\mathbf{\Theta}_\mu$   with non-constant curvature, then the binormal evolution surface $N^2_\gamma$ defined in \eqref{nn} is a rotational surface of $\m^3(\rho)$ with constant skew curvature $\mathcal{S}_k=1/ \mu$.
\end{theorem}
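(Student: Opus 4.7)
The plan is to reduce the theorem to a direct computation of the principal curvatures of $N_\gamma^2$, reusing the rotational-surface framework already set up in the proof of Theorem \ref{t1}. From the paragraph preceding the statement, the hypothesis that $\kappa$ is non-constant together with the binormal flow theory of \cite{agp} and the Killing extension result of \cite{ls} already guarantees that $\mathcal{I}$ extends to a (rotational) Killing vector field on $\m^3(\rho)$, that $N^2_\gamma$ is a genuine rotational surface, and that $\gamma$ is a profile curve (i.e.\ orthogonal to the orbits of $\{\phi_t\}$). Hence the formulas from the proof of Theorem \ref{t1} apply verbatim to $N^2_\gamma$, and only the constancy and value of the skew curvature remain to be verified.

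The key identification is the function $G$ that measures the length of the infinitesimal generator of $\{\phi_t\}$ along the profile curve. Since $B$ is the (constant) unit binormal of $\gamma$ and $\mu>0$, the defining expression $\mathcal{I}=\mu e^{\mu\kappa}B$ gives immediately $G(s)=\mu e^{\mu\kappa(s)}$.

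By \eqref{h22}, the two principal curvatures of $N^2_\gamma$ are $\kappa_1=-\kappa$ and
\[
\kappa_2=h_{22}=\frac{1}{\kappa}\left(\frac{G_{ss}}{G}+\rho\right).
\]
Since $G/\mu=e^{\mu\kappa}$, the Euler--Lagrange equation \eqref{EL} (divided through by the positive quantity $e^{\mu\kappa}$) can be rewritten as $G_{ss}/G=-(\kappa^2-\kappa/\mu+\rho)$. Substituting into the expression for $h_{22}$ yields $\kappa_2=-\kappa+1/\mu$ and therefore $|\kappa_1-\kappa_2|=1/\mu$ wherever $\kappa\neq 0$. The isolated zeros of $\kappa$ (if any) cause no trouble, since both principal curvatures and their difference extend continuously; thus $\mathcal{S}_k\equiv 1/\mu$ on the whole of $N^2_\gamma$.

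The only non-routine ingredient is the content of the first paragraph: that the flow $\{\phi_t\}$ is really a one-parameter group of rotations (rather than, say, a group of parabolic or translational isometries in $\h^3(\rho)$), that $\gamma$ is orthogonal to the orbits, and that the parametrization $\varphi(s,t)=\phi_t(\gamma(s))$ is regular so that $h_{22}$ is well-defined. This is precisely the binormal-flow construction in \cite{agp}, which I would invoke rather than re-derive; once granted, the rest of the argument is a one-line algebraic consequence of \eqref{EL}.
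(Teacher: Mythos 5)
Your proposal is correct and follows essentially the same route as the paper: the paper's proof introduces the flow velocity $V(s)=\mu e^{\,\mu\kappa}$ (your $G$), observes that the Euler--Lagrange equation \eqref{EL} says exactly that $V_{ss}+\left(\kappa^2-\kappa/\mu+\rho\right)V=0$, i.e.\ that \eqref{red} holds with $c=-1/\mu$, and then reads off $\mathcal{S}_k=\lvert c\rvert$ from the identification $\kappa_1=-\kappa$, $\kappa_2=h_{22}$ provided by the binormal-flow framework of \cite{agp}. Your version merely makes the substitution into \eqref{h22} explicit (obtaining $\kappa_2=-\kappa+1/\mu$) and adds the harmless remark about continuity at isolated zeros of $\kappa$.
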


\begin{proof}
 Let $V(s)=\langle \mathcal{I},\mathcal{I}\rangle^{1/2}$ be the velocity of the flow, that is,
\begin{equation}\label{V}
V^2(s)=\langle \mathcal{I},\mathcal{I}\rangle=\langle \frac{\partial\varphi}{\partial t},\frac{\partial\varphi}{\partial t}\rangle=\mu^2 e^{2\mu\kappa}\,.
\end{equation}
Since the evolution is defined by isometries, $\gamma$ and all its congruent copies $\varphi(-,t)$ are planar critical curves for $\mathbf{\Theta}_\mu$ with non-constant curvature. From   \eqref{EL}, the function $V(s)$ satisfies 
$$V_{ss}+\left(\kappa^2-\frac{\kappa}{\mu}+\rho\right)V=0\,.$$
Since $N^2_\gamma$ is a rotational surface, the principal curvatures are $\kappa_1(s)=-\kappa(s)$ and $\kappa_2(s)=h_{22}(s)$, \cite{agp}. Thus $N^2_\gamma$ satisfies the equation \eqref{lw} for $c=-1/\mu$, hence $\mathcal{S}_k=\lvert c\rvert$ is constant, proving the result.
\end{proof}

Consequently, Theorem \ref{converse} provides a geometric method of constructing non-isoparametric rotational surfaces of constant skew curvature in $\m^3(\rho)$. Moreover, together with Theorem \ref{t1}, we have a complete characterization of these surfaces as binormal evolution surfaces generated by critical curves for the exponential type curvature energy $\mathbf{\Theta}_\mu$. Let us notice  that this characterization is valid for any of the three types of rotations in hyperbolic space because Theorem \ref{converse} holds  in these cases. Exactly,  the constant of integration $d$ in \eqref{fi}  plays a fundamental role to describe the shape of the orbits.  

\begin{proposition}
 Let $N^2_\gamma$ be a non-isoparametric rotational surface of $\m^3(\rho)$ with constant skew curvature and given by \eqref{nn}. If the constant of integration $d$ in \eqref{fi} is positive, then $N^2_\gamma$ is a spherical rotational surface.
\end{proposition}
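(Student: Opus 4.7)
The plan is to compute the curvature of the orbits of the Killing vector field $\mathcal{I}$ as curves of $\m^3(\rho)$, and to show that the first integral \eqref{fi} turns the positivity of $d$ into the condition that these orbits are closed circles. The conclusion is automatic in $\r^3$ and $\s^3(\rho)$, where every rotational Killing field is spherical; the content lies in $\h^3(\rho)$, which additionally admits hyperbolic and parabolic rotations with orbits that are hypercycles and horocycles, respectively. The argument I outline is uniform in $\rho$.

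First, I would use the $(s,t)$ coordinates of \eqref{nn} on $N^2_\gamma$, in which the induced metric is the warped product $ds^2+V(s)^2\,dt^2$ with $V=\mu e^{\mu\kappa}$. Because the $s$-curves are geodesics of $N^2_\gamma$ and the $t$-curves are the orbits of $\mathcal I$, the two families are mutually orthogonal and form the principal directions of the rotational surface. A routine computation in this metric gives the geodesic curvature of an orbit in $N^2_\gamma$ equal to $|V_s|/V$, while its normal curvature in $\m^3(\rho)$ is the principal curvature $\kappa_2=h_{22}$. Pythagoras yields
\[
\kappa_{\text{orb}}^{\,2}=\frac{V_s^{\,2}}{V^{\,2}}+\kappa_2^{\,2}.
\]

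Next, I would rewrite the first integral \eqref{fi}. With $V=\mu e^{\mu\kappa}$ one has $V_s=\mu^2 e^{\mu\kappa}\kappa_s$, and the constant skew curvature condition \eqref{lw} with $c=-1/\mu$ gives $\kappa_2=1/\mu-\kappa$. Substituting into \eqref{fi} turns it into
\[
V_s^{\,2}+V^{\,2}\bigl(\kappa_2^{\,2}+\rho\bigr)=d,
\]
so dividing by $V^2>0$ yields the key identity
\[
\kappa_{\text{orb}}^{\,2}+\rho=\frac{d}{V^{\,2}}.
\]

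To finish, I would invoke the standard classification of planar curves of constant curvature in $\m^3(\rho)$: such a curve is a closed circle precisely when $\kappa_{\text{orb}}^{\,2}+\rho>0$, a horocycle when equality holds and an equidistant from a geodesic otherwise. Since orbits of a Killing field have constant ambient curvature and lie in totally geodesic planes, $d>0$ forces every orbit to be a closed circle, whence $\mathcal I$ generates a one-parameter group of spherical rotations and $N^2_\gamma$ is a spherical rotational surface. The main obstacle I anticipate is the identification of the normal curvature of an orbit with $\kappa_2$: this requires the $t$-curves to be genuine lines of curvature of $N^2_\gamma$, which follows from the orthogonality of the $(s,t)$-frame together with the rotational symmetry established in Theorem \ref{converse}.
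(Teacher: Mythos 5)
Your argument is correct and follows essentially the same route as the paper: both compute the squared curvature $\eta_\delta^2$ of an orbit as (geodesic curvature)$^2$ plus $h_{22}^2$, use the first integral \eqref{fi} to obtain $\eta_\delta^2+\rho=d/V^2$, and conclude that $d>0$ is exactly the condition $\eta_\delta^2>-\rho$ characterizing circular orbits (the only nontrivial case being $\h^3(\rho)$). The only difference is that you derive the orbit-curvature formula from the warped-product metric rather than citing \cite{agp}; incidentally, your geodesic-curvature term $V_s^2/V^2=\mu^2\kappa_s^2$ is the consistent one, whereas the paper's displayed $\mu^4\kappa_s^2$ appears to be a typo.
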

\begin{proof}
We know that  equation \eqref{fi} represents a first integral of \eqref{EL}. For any fixed $s_0\in I$, let $\delta(\upsilon)$ represent an arc-length parametrized orbit of $N^2_\gamma$ through $s_0$, and denote by $\eta_\delta$ the curvature of $\delta$. Now, specializing the computations of \cite{agp} about   $\eta_\delta$, we have
$$\eta_\delta^2=\mu^4\kappa_s^2+h_{22}^2=\frac{d}{\mu^2 e^{2\mu\kappa}}-\rho\,,$$
where for the last equality we have used \eqref{h22} and \eqref{fi}.

Note that if $\rho\geq 0$,   the constant $d$ in \eqref{fi} is necessarily positive. In these cases, namely, $\r^2$ and $\s^2(\rho)$, there exist only spherical rotations. On the other hand, in $\mathbb{H}^2(\rho)$, the three possible signs of $d$ indicate the type of the three different options for the rotation. In particular, we have that orbits are circles ($\eta_\delta^2>-\rho$)  if and only if  $d>0$, as stated. 
\end{proof} 

From now on, the constant $d$ will be restricted to be positive.

%%%%%%%%%%%%%%%%%%%%%%%%%%%%%%%%%%%
\section{Properties of the critical curves}\label{sec3}
%%%%%%%%%%%%%%%%%%%%%%%%%%%%%%%%%%%

In this section, we obtain some qualitative properties of the critical curves $\gamma$ for  $\mathbf{\Theta}_\mu$ viewing $\gamma$ as solutions of an autonomous system. This will simplify the work for the subsequent classification of the rotational surfaces with constant skew curvature given in Section \ref{sec4}. 

We consider the Riemannian $2$-space form  $\m^2(\rho)$ as a subset of the affine space $\mathbb{R}^3$ given as follows. If   $(x_1,x_2,x_3)$ are the  canonical coordinates of $\r^3$, then the Euclidean plane $\mathbb{R}^2$ is $\mathbb{R}^2\times\{0\}$, the $2$-sphere $\mathbb{S}^2(\rho)$ is the hyperquadric $x_1^2+x_2^2+x_3^2=1/\rho$ and the hyperbolic plane $\mathbb{H}^2(\rho)$ is $x_1^2+x_2^2-x_3^2=1/\rho$ with $x_3>0$. For these models,   $\mathbb{R}^2$ and $\mathbb{S}^2(\rho)$ have the induced metric from the Euclidean metric of $\mathbb{R}^3$, while $\mathbb{H}^2(\rho)$ is endowed with the Lorentzian metric  $dx_1^2+dx_2^2-dx_3^2$.

Let $\gamma$ be a critical curve for $\mathbf{\Theta}_\mu$ with non-constant curvature $\kappa$. By Proposition \ref{pr-first}, we know that $\kappa$ satisfies  \eqref{fi} for some constant $d$. Recall that we are assuming $d>0$. Let us introduce the notation
\begin{equation}\label{not}
x=e^{\,\mu\kappa}\,,\quad \quad\quad y=x_s\,.
\end{equation}
Then the equation \eqref{fi} can be rewritten as an implicit equation that describes the orbits in the phase space,
\begin{equation}\label{F}
F(x,y)=\mu^2y^2+\left(\log x-1\right)^2x^2+\rho \mu^2 x^2=d\,,
\end{equation}
for $x>0$. 

Using \eqref{fi}, we analyze the orbits in the phase plane of the following autonomous system   
\begin{equation}\label{php}
\left(\begin{array}{c}x_s\\ y_s\end{array}\right)=\left(\begin{array}{c} y\\ -\dfrac{x}{\mu^2}\left(\log^2(x)-\log(x)+\rho\mu^2\right)\end{array}\right).
\end{equation}
The are two stationary points, namely, $P_+$ and $P_{-}$, where $P_{\pm}=\left(x_{\pm},0\right)$  and 
\begin{equation}\label{ll}
\log x_{\pm}=\frac{1\pm\sqrt{1-4\rho\mu^2}}{2}.
\end{equation}
See figure \ref{orbitas}. As it is expected, these points coincide with the solutions of the Euler-Lagrange equation \eqref{EL} with constant curvature. The Hessian matrix associated to \eqref{php} evaluated at $P_{\pm}$ is
$$\mathcal{H}\left(P_{\pm}\right)=\left(\begin{array}{cc} 0&1\\ \dfrac{\mp\sqrt{1-4\rho\mu^2}}{2\mu^2}&0\end{array}\right).$$
Then, the classification of the type of stationary points is done according to the value of $\rho$:
\begin{enumerate}
\item Case $4\rho\mu^2<1$. This occurs always in $\r^2$ and $\h^2(\rho)$.  
 The eigenvalues corresponding to the point $P_+$ are two opposite pure imaginary complex numbers and $P_{-}$ has two opposite real eigenvalues. Thus $P_{+}$ is a centre and $P_{-}$ is an unstable saddle point. 
\item Case $4\rho\mu^2= 1$. In this case, necessarily $\rho>0$. Now there is a unique stationary point, namely, $P_+=P_{-}=(\sqrt{e},0)$. The eigenvalues of $\mathcal{H}\left(P_+\right)$     are $0$ and the matrix $\mathcal{H}\left(P_+\right)$ is not diagonalizable. Then $(\sqrt{e},0)$ is a degenerate point.
\end{enumerate}

\begin{figure}[hbtp]
\begin{center}
\includegraphics[width=.3175\textwidth]{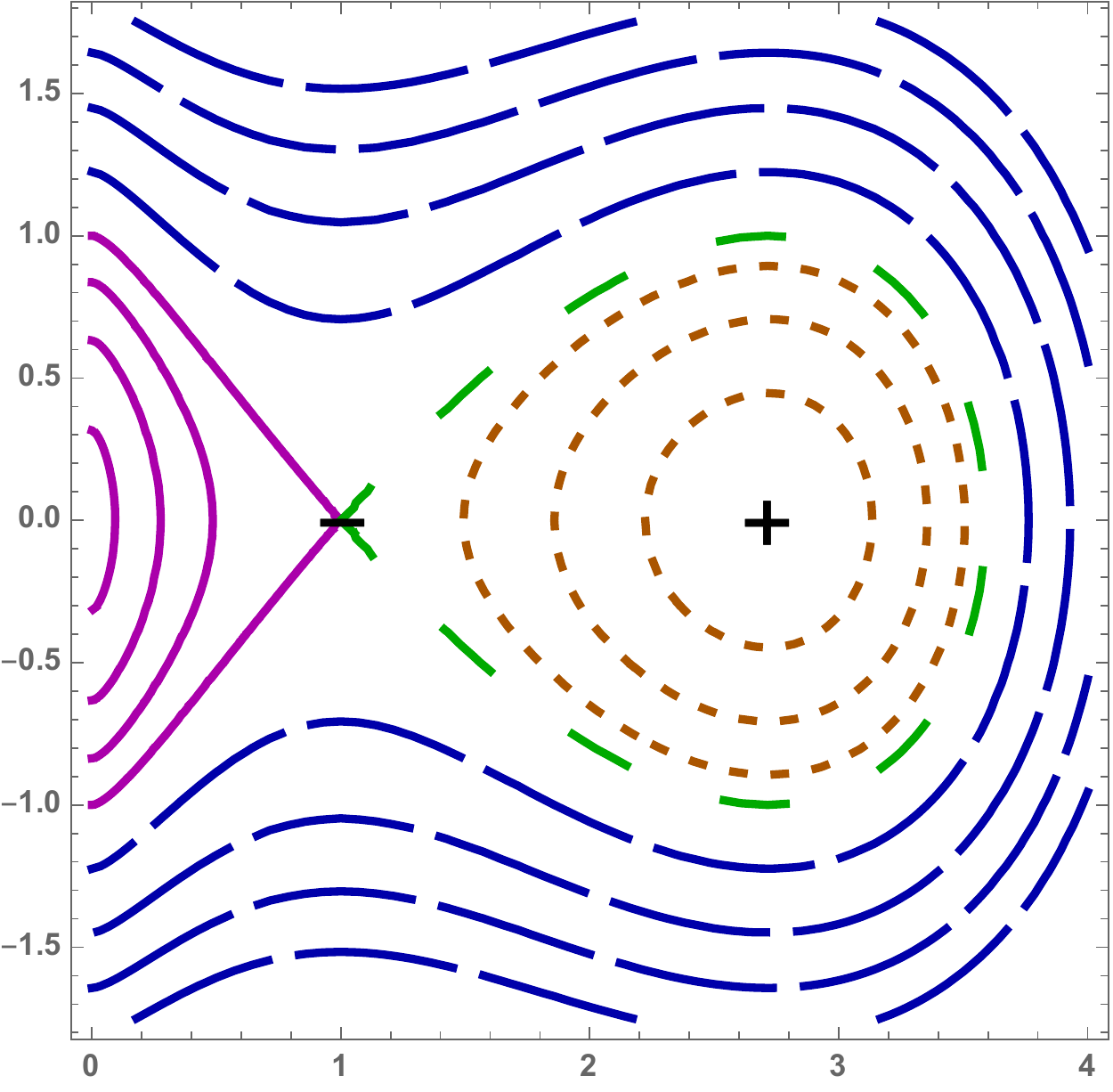}\quad\,\,\includegraphics[width=.31\textwidth]{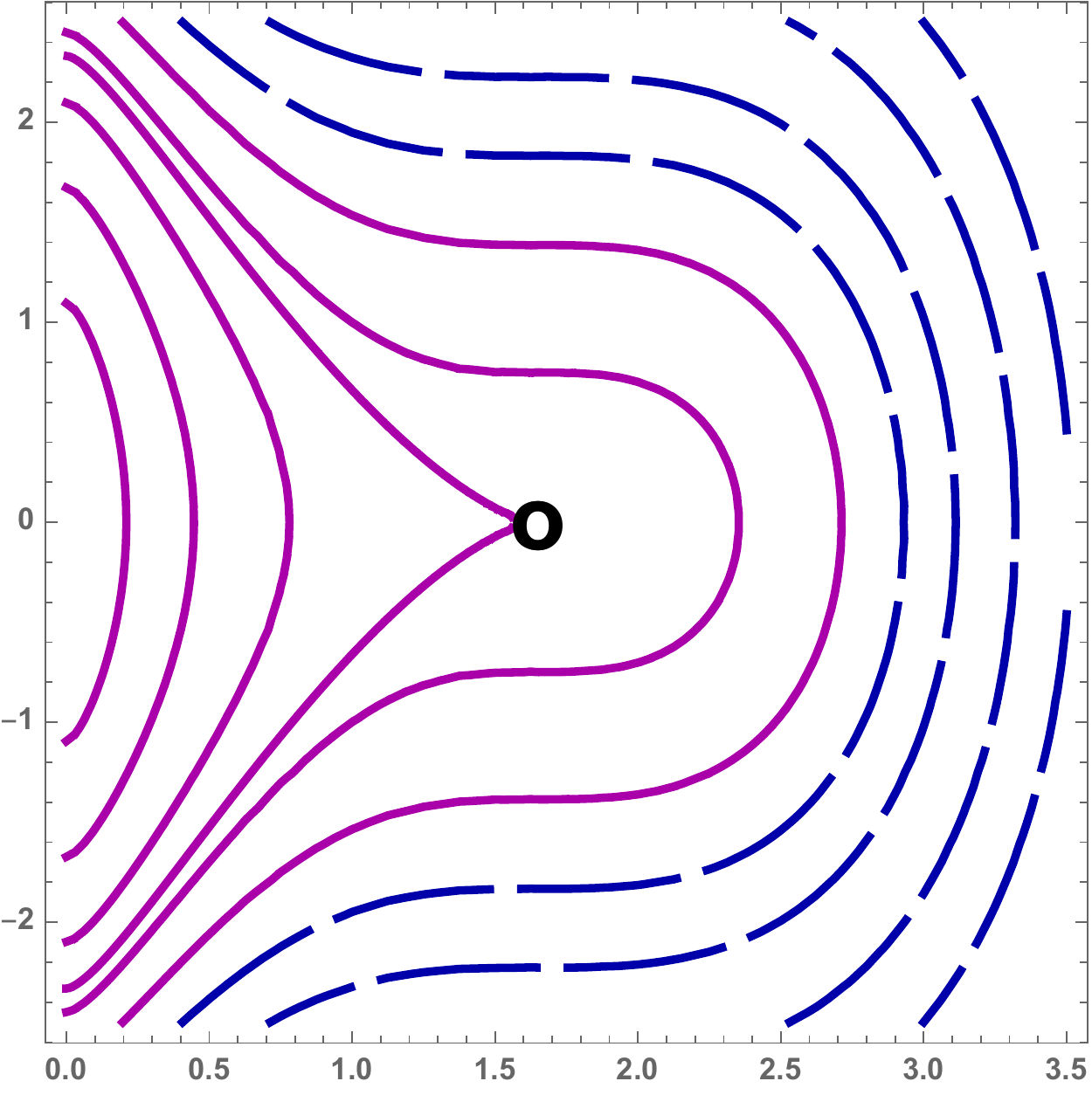}\quad\,\includegraphics[width=.31\textwidth]{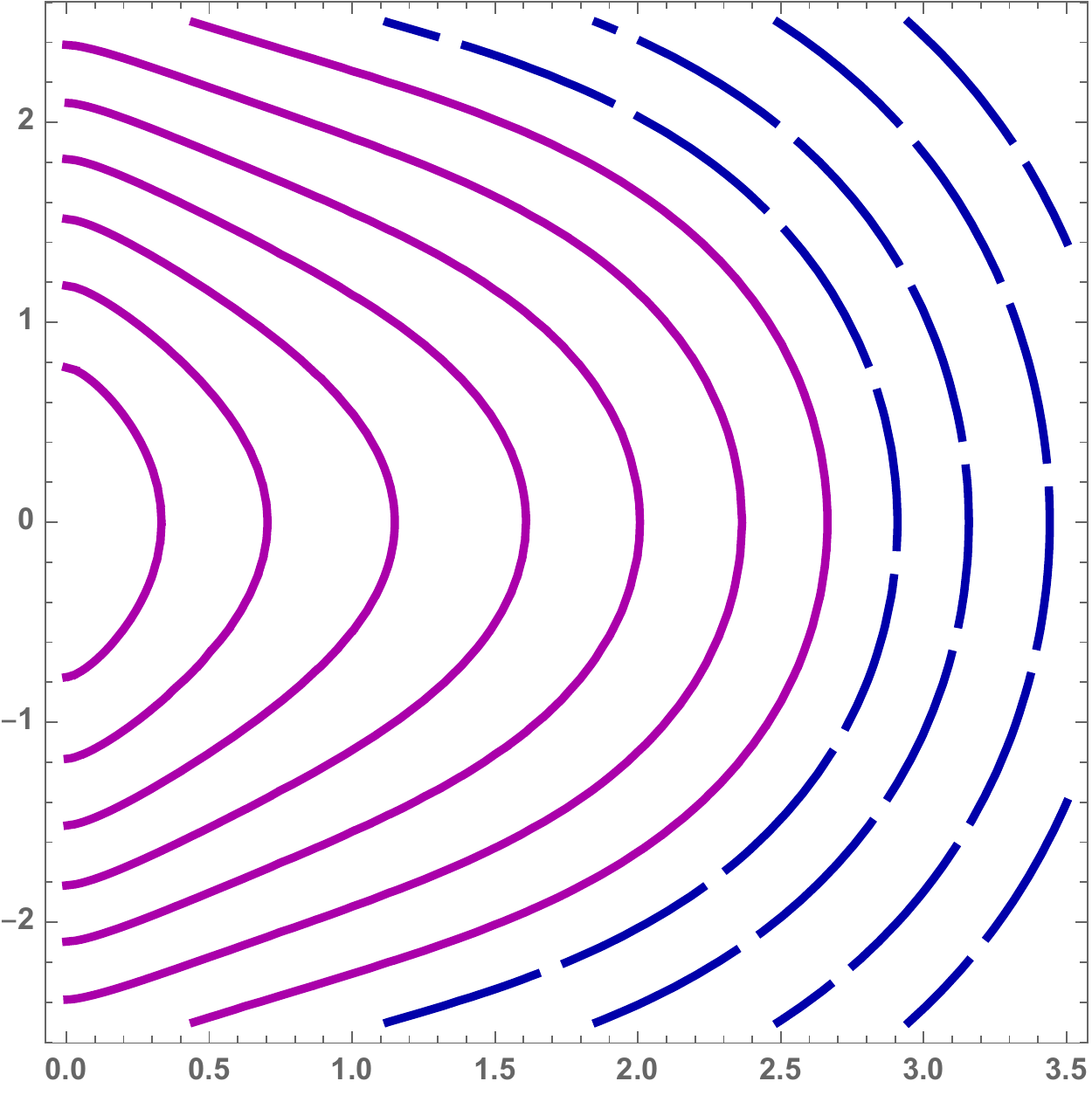}
\end{center}
\caption{Orbits in phase planes. (Left) Case $4\rho\mu^2<1$: here $\rho=0$, $\mu=1$ and the critical points are $P_{-}=(1,0)$ and $P_{+}=(e,0)$, respectively. (Centre) Case $4\rho\mu^2=1$: here $\rho=1$, $\mu=1/2$ and the critical points are $P_{-}=P_{+}=(\sqrt{e},0)$. (Right) Case $4\rho\mu^2>1$: here $\rho=1$, $\mu=1$ and there are no critical points.}
\label{orbitas}
\end{figure}

For each fixed connected component of an orbit, we call $x_0$ to the maximum value of $x>0$. From \eqref{F},  this value $x_0$ is reached when $y=0$. Define the function 
$$\widehat{F}(x)=F(x,0)=x^2\left(\left(\log x-1\right)^2+\rho\mu^2\right)\,.$$
When $x\rightarrow 0$, $\widehat{F}(x)$ tends to $0$. Then, $\widehat{F}(x)$ increases in the interval $\left(0,x_{-}\right)$. After that value, $\widehat{F}(x)$  begins to decrease until it reaches the other value $x_{+}$ and, finally,  $\widehat{F}(x)$ increases until infinity. Moreover, the value of $\widehat{F}(x)$ at $x_{\pm}$ is
$$\widehat{F}(x_{\pm})=x_{\pm}^2\log x_{\mp}\,.$$
This implies that for each fixed $d>0$ the equation $ \widehat{F}(x)=d$ has either one or three solutions depending on the value of $\rho$.

With the notation introduced in \eqref{not}, the critical curve $\gamma$ can be arc-length parametrized as 
\begin{equation}\label{param}
\gamma(s)=\Phi\left(\mu x(s),\psi(s)\right),
\end{equation}
where  the function $\Phi:\mathbb{R}^2\rightarrow \m^2(\rho)\subset\mathbb{R}^3$ is
\begin{eqnarray*}
 \Phi(u,v)=\begin{cases} \dfrac{1}{\sqrt{d}}\left(u,dv,0\right), & \mbox{if } \rho=0\, \\ \dfrac{1}{\sqrt{\rho\,d}}\left(\sqrt{\rho}\,u,\sqrt{d-\rho u^2}\sin\left(\sqrt{\rho d}v\right),\sqrt{d-\rho u^2}\cos\left(\sqrt{\rho d}v\right)\right), & \mbox{if } \rho>0
  \\ \dfrac{1}{\sqrt{-\rho\,d}}\left(\sqrt{-\rho}\,u,\sqrt{d-\rho u^2}\sinh\left(\sqrt{-\rho d}v\right),\sqrt{d-\rho u^2}\cosh\left(\sqrt{-\rho d}v\right)\right), & \mbox{if } \rho<0\,, \end{cases}
\end{eqnarray*}
and the function $\psi(s)$ is determined by the arc-length parametrization. We   can assume that the arc-length parameter is chosen so that $x(0)=x_0$, where recall that $x_0$ is the maximum value of $x>0$ in each fixed orbit. Then 
\begin{equation}\label{psi}
\psi(s)=\int_0^s \frac{x(t)\left(\log x(t)-1\right)}{d-\rho\mu^2x(t)^2}\,dt\,.
\end{equation}
The function $\psi(s)$ is well defined provided that $d-\rho\mu^2x(t)^2\not=0$. We observe that $d=\rho\mu^2x(t)^2$ only when $\rho>0$ and the curve passes through the pole. 

\begin{proposition}\label{pole} Let $\gamma$ be a critical curve in $\mathbb{S}^2(\rho)$  for the energy $\mathbf{\Theta}_\mu$. If  $d>0$ is the constant of integration in \eqref{fi}, then $\gamma$ passes through the pole of $\mathbb{S}^2(\rho)$ if and only if $d=\rho\,\mu^2e^2$ and $x_0\geq e$.
\end{proposition}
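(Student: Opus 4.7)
The plan is to identify the pole condition directly from the explicit parametrization $\gamma(s)=\Phi(\mu x(s),\psi(s))$ in the case $\rho>0$, and then exploit the first integral \eqref{fi} to turn that condition into an algebraic constraint on $d$ and $x_0$. The sphere $\mathbb{S}^2(\rho)\subset\mathbb{R}^3$ sits as $x_1^2+x_2^2+x_3^2=1/\rho$, and from the form of $\Phi$ the rotation axis is the $x_1$-axis, so the poles are $(\pm 1/\sqrt{\rho},0,0)$. Looking at the second and third coordinates of $\Phi(\mu x,\psi)$, they share a common factor $\sqrt{d-\rho\mu^2 x^2}$ multiplying $\sin(\sqrt{\rho d}\,\psi)$ and $\cos(\sqrt{\rho d}\,\psi)$ respectively, so (since sine and cosine cannot vanish together) $\gamma$ hits a pole at some $s=s^\ast$ precisely when $x(s^\ast)^2=d/(\rho\mu^2)$. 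A one-line check then confirms that the first coordinate of $\Phi$ at this value equals $1/\sqrt{\rho}$, so we do land at a pole.

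For the direct implication, I would substitute the pole condition $\rho\mu^2 x(s^\ast)^2=d$ into the first integral \eqref{fi} written as $\mu^2 y^2+(\log x-1)^2 x^2+\rho\mu^2 x^2=d$. After cancelling $\rho\mu^2 x(s^\ast)^2=d$ against $d$ on the right, what remains is
\begin{equation*}
\mu^2 y(s^\ast)^2+(\log x(s^\ast)-1)^2\,x(s^\ast)^2=0,
\end{equation*}
a sum of two nonnegative terms that must both vanish. Hence $\log x(s^\ast)=1$, so $x(s^\ast)=e$; comparing with $x(s^\ast)^2=d/(\rho\mu^2)$ gives $d=\rho\mu^2 e^2$. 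Moreover $y(s^\ast)=0$ says $s^\ast$ is a turning point of $x$, so $x(s^\ast)=e$ is a local extremum and in particular $x_0\geq e$.

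For the converse, with $d=\rho\mu^2 e^2$ and $x_0\geq e$ assumed, I want to show that the orbit actually attains the value $x=e$. The key observation is that for $x>e$ we have $(\log x-1)^2>0$, hence
\begin{equation*}
\widehat{F}(x)=x^2\bigl((\log x-1)^2+\rho\mu^2\bigr)>\rho\mu^2 x^2>\rho\mu^2 e^2=d,
\end{equation*}
so no point of the orbit can have $x>e$. Combined with the hypothesis $x_0\geq e$, this forces $x_0=e$. Since $(e,0)$ is then a turning point of the orbit in the phase plane, the value $x=e$ is attained by $x(s)$, and at that parameter $\sqrt{d-\rho\mu^2 x(s)^2}=0$, so $\gamma(s)$ is a pole by Step~1.

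The main obstacle is really just bookkeeping around the phase-plane geometry in the converse: one needs the monotonicity-at-infinity estimate $\widehat{F}(x)>d$ for $x>e$ to rule out orbit components living to the right of $e$, so that the condition $x_0\geq e$ (which looks like an inequality) in fact pins down $x_0=e$ and guarantees $e$ lies on the orbit; the forward direction is pure algebra from the first integral, with the cosmetic identification of the pole in the $\Phi$-parametrization being the only point that needs a careful reading of the coordinates.
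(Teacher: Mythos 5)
Your proof is correct and follows essentially the same route as the paper: read off the pole condition $d=\rho\mu^2x^2$ from the parametrization, substitute into the first integral to force $(x,y)=(e,0)$, and hence $d=\rho\mu^2e^2$. The only difference is that you spell out the converse (using $\widehat{F}(x)>d$ for $x>e$ to pin down $x_0=e$), which the paper compresses into the remark that the parametrization \eqref{param} concludes the result.
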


\begin{proof} 
By   \eqref{F},   $d=\rho\mu^2x(t)^2$ at the point $P=(x,y)$ if and only if $\mu^2y^2+x^2\left(\log x-1\right)^2=0$.   Thus   $P=(e,0)$. Now,   the parametrization of $\gamma$ in \eqref{param}  concludes the result. 
\end{proof}

We parametrize the critical curve $\gamma$ in terms of the variable $x$. Using \eqref{F}, the change of variable $y=x_s$ in the expression \eqref{psi} of $\psi(s)$ gives
\begin{equation}\label{Psi(x)}
\psi\left(x\right)=\int_{x_0}^{x}\frac{r\left(\log r-1\right)}{\left(d-\rho\mu^2r^2\right)y(r)}\,dr=\mu\int_{x_0}^{x}\frac{r\left(\log r-1\right)}{\left(d-\rho\mu^2r^2\right)\sqrt{d-\rho\mu^2 r^2-r^2\left(\log r-1\right)^2}}\,dr\,.
\end{equation}
It turns out that for any $\rho$, $d>\rho\mu^2x^2+x^2\left(\log x-1\right)^2>\rho\mu^2x^2$ holds almost everywhere. Then $\psi'(x)<0$ if $x\in\left(0,e\right)$ and $\psi'(x)>0$ when $x>e$, provided that we are considering the symmetric  branch of the critical curve $\gamma$ given by $y(x)\geq 0$ (see Proposition \ref{sym}). This implies that $\psi(x)$ decreases while $x\in\left(0,e\right)$ and increases for $x>e$. 

From the parametrization \eqref{param} and using \eqref{Psi(x)}, we now  deduce some properties about the symmetries of $\gamma$ and its cuts with respect to some coordinate axis.  The following result will simplify the work of the classification of Section \ref{sec4}. 

\begin{proposition}\label{sym} Let $\gamma$ be a critical curve of $\mathbf{\Theta}_\mu$ in ${\mathbb M}^2(\rho)$. Then, up to rigid motions, we have: 
\begin{enumerate}
\item The curve $\gamma$ is symmetric with respect to the geodesic $\alpha= \Pi_{13}\cap {\mathbb M}^2(\rho)$, where $\Pi_{13}$ is the plane of equation $x_2=0$. 
\item When $x$ goes to $0$, $\gamma$ tends to cut the geodesic $\beta=\Pi_{23}\cap {\mathbb M}^2(\rho)$ orthogonally, where $\Pi_{23}$ is the plane of equation $x_1=0$. Moreover, the geodesic $\beta$ corresponds with the rotation axis $\zeta$.
\end{enumerate}
\end{proposition}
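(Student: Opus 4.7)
The plan is to read both assertions directly off the parametrization \eqref{param}, exploiting the choice of origin $s=0$ at a maximum of $x$, which forces $x_s(0)=0$. Since the Euler--Lagrange equation \eqref{EL}, rewritten via \eqref{php}, is an autonomous second-order ODE for $x(s)$, uniqueness of solutions with initial data $(x_0,0)$ yields $x(-s)=x(s)$; substituting into \eqref{psi} then gives $\psi(-s)=-\psi(s)$.

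For part (1), I inspect $\Phi(u,v)$ case by case: linear in $v$ for $\rho=0$, with second and third coordinates proportional to $\sin(\sqrt{\rho d}\,v)$ and $\cos(\sqrt{\rho d}\,v)$ for $\rho>0$, and to $\sinh(\sqrt{-\rho d}\,v)$ and $\cosh(\sqrt{-\rho d}\,v)$ for $\rho<0$. In every case $\Phi(u,-v)$ differs from $\Phi(u,v)$ only by a sign in the second coordinate. Combining this with the parities of $x$ and $\psi$ obtained above, $\gamma(-s)$ is the reflection of $\gamma(s)$ through the plane $\Pi_{13}$, and hence through the geodesic $\alpha=\Pi_{13}\cap\m^2(\rho)$.

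For part (2), the first coordinate of $\gamma(s)$ equals, in all three models, a positive constant times $\mu x(s)$, so $x\to 0$ forces $\gamma(s)\to\Pi_{23}\cap\m^2(\rho)=\beta$. Differentiating \eqref{param} yields $\gamma'(s)=\mu x_s\,\Phi_u(\mu x,\psi)+\psi_s\,\Phi_v(\mu x,\psi)$, where $\Phi_u$ and $\Phi_v$ are orthogonal at every point and $\Phi_v|_{u=0}$ spans the tangent line of $\beta$. From \eqref{fi} one has $\mu^2 x_s^2\to d$ as $x\to 0$, while $\psi_s(s)=x(\log x-1)/(d-\rho\mu^2 x^2)\to 0$ because $x\log x\to 0$ and the denominator stays bounded below by $d>0$. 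Hence, in the limit, $\gamma'$ is a nonzero multiple of $\Phi_u|_{u=0}$, which is orthogonal to $\Phi_v|_{u=0}$, proving the orthogonal intersection. Finally, by the construction in Section \ref{sec2}, the axis $\zeta$ is the fixed set of the flow of the unique Killing extension of $\mathcal{I}=\mu e^{\,\mu\kappa}B$, and matching $\mathcal{I}|_\gamma$ with the generators of the ambient rotations in each of the three space forms identifies $\zeta$ with the infinitesimal rotation about $\beta$.

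The main obstacle I anticipate is this last identification $\beta=\zeta$, which has to be carried out separately in $\r^3$, $\s^3(\rho)$ and $\h^3(\rho)$ using the explicit form of the Killing fields in each ambient geometry; the symmetry and orthogonal-intersection statements, by contrast, are immediate algebraic consequences of the explicit form of $\Phi$ together with the parities of $x$ and $\psi$.
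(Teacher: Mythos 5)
Your proof is correct and follows essentially the same route as the paper: both read the symmetry off the parities of $x(s)$ and $\psi(s)$ in the parametrization \eqref{param}, and both obtain the orthogonal cut with $\beta$ by computing the limit of $\gamma'$ as $x\to 0$ using $\psi'\to 0$, with the identification $\zeta=\beta$ asserted from the construction of $\mathcal{I}$. If anything, you are slightly more careful than the paper in justifying $x(-s)=x(s)$ via uniqueness for the autonomous system and in invoking the orthogonality of $\Phi_u$ and $\Phi_v$ rather than the (unneeded) claim that $\psi(x)\to 0$.
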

\begin{proof}
After suitable rigid motions, we assume that $\gamma$  is parametrized by \eqref{param}. 
\begin{enumerate}
\item Consider the change of variable   in \eqref{psi} given by $\psi(-s)=-\psi(s)$. From the parametrization \eqref{param}, the second coordinate of $\gamma(-s)$ has the opposite sign to the corresponding one of $\gamma(s)$ and the first and third coordinates coincide. Thus, $\gamma$ is symmetric with respect to $\alpha$. This proves the first statement. 
\item From the parametrization \eqref{param}, $\gamma$ would meet the geodesic $\beta$ if its first component vanishes, that is, when $x=e^{\,\mu\kappa}=0$, which is outside of the parametrization. However, in order to obtain complete curves, the point $x=0$ can be added by continuity.

We now compute the first derivative of the critical curve $\gamma$ using the parametrization $\gamma(x)$, \eqref{param} and \eqref{Psi(x)}, to obtain
$$\gamma'(x)=\mu \Phi_u\left(\mu x,\psi(x)\right)+\psi'(x)\Phi_v\left(\mu x,\psi(x)\right).$$
As $x\rightarrow 0$, from \eqref{Psi(x)} we have $\psi(x)\rightarrow 0$ and $\psi'(x)\rightarrow 0$. Then a simple computation from \eqref{param} concludes that $\gamma'(x)\rightarrow  \left(\mu/\sqrt{d},0,0\right)$
as $x\rightarrow 0$. Thus the tangent   of the critical curve $\gamma$ is orthogonal to the geodesic $\beta$ at $x=0$. Finally, from the geometric construction introduced above, the vector field $\mathcal{I}$ defined in  \eqref{I}, and the parametrization \eqref{param}, it is clear that the axis of rotation $\zeta$ is, precisely, the geodesic $\beta$.
\end{enumerate}
\end{proof}

\begin{remark}\label{regularity} For the critical curves $\gamma$ that  cut orthogonally the geodesic $\beta$, after reflecting them across $\beta$, we obtain   complete curves of class $\mathcal{C}^1$ on the points $x=0$. However, these curves are not of  class $\mathcal{C}^2$ in those points since the curvature $\kappa=\log\left(x\right)/\mu$ tends to $-\infty$ when $x$ goes to zero. By the variational problem, in the rest of the points, these curves are smooth. Moreover, we recall that after the binormal evolution shown in previous sections, the same regularity conditions hold for the associated rotational surface with constant skew curvature.
\end{remark}
 
%%%%%%%%%%%%%%%%%%%%%%%%%%%%%%%%%%%
\section{Classification of the critical curves in $2$-space forms}\label{sec4}
%%%%%%%%%%%%%%%%%%%%%%%%%%%%%%%%%%%

In this section we give a   classification of the critical curves for the exponential type curvature energy $\mathbf{\Theta}_\mu$ showing all possible shapes for these curves. This  will be done depending on the sign of the constant sectional curvature $\rho$: see Theorem \ref{r2} ($\rho=0$), Theorems \ref{r3} and \ref{r4} ($\rho>0$) and Theorem \ref{r5} ($\rho<0$).

%%%%%%%%%%%%%%%%%%
\subsection{Case $\rho=0$: the Euclidean plane $\r^2$}
%%%%%%%%%%%%%%%%%%

We   consider the Euclidean plane $\r^2$ ($\rho=0$). Firstly, we prove a result about the behavior of the critical curves by dilations. We write $F=F(x,y;\mu)$ to indicate the dependence of $F$ on the index energy $\mu$.

\begin{proposition}\label{unique} In the Euclidean plane $\mathbb{R}^2$, any dilation of ratio $\lambda>0$ of a critical curve of $\mathbf{\Theta}_\mu$ for $d>0$ is a critical curve of $\mathbf{\Theta}_{\lambda\mu}$ for $d>0$. 
\end{proposition}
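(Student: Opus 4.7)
The plan is to verify directly that the dilation respects the Euler-Lagrange equation \eqref{EL} with $\rho = 0$, using the fact that dilation rescales arc-length and curvature in opposite ways. I would present this either via the first integral \eqref{fi} (or equivalently the phase-plane equation \eqref{F}), which makes the invariance transparent, or via a direct substitution in \eqref{EL}.

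The first step is to fix the transformation. Let $\gamma$ be a critical curve of $\mathbf{\Theta}_\mu$, parametrized by arc-length $s$, with curvature $\kappa(s)$ satisfying \eqref{EL} with $\rho=0$. Define $\widetilde{\gamma}(\widetilde{s}) = \lambda\,\gamma(\widetilde{s}/\lambda)$; this is again arc-length parametrized, with curvature $\widetilde{\kappa}(\widetilde{s}) = \kappa(\widetilde{s}/\lambda)/\lambda$. Setting $\widetilde{\mu} = \lambda\mu$, the key identity is
\[
\widetilde{\mu}\,\widetilde{\kappa}(\widetilde{s}) = \mu\,\kappa(\widetilde{s}/\lambda),
\]
so that $e^{\widetilde{\mu}\widetilde{\kappa}}$ as a function of $\widetilde{s}$ is just $e^{\mu\kappa}$ pulled back by the rescaling $s = \widetilde{s}/\lambda$.

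The second step is to substitute this into the Euler-Lagrange equation \eqref{EL} written for the pair $(\widetilde{\mu},\widetilde{\kappa})$. Since $d/d\widetilde{s} = (1/\lambda)\, d/ds$, one gets
\[
\frac{d^{2}}{d\widetilde{s}^{2}}\bigl(e^{\widetilde{\mu}\widetilde{\kappa}}\bigr) + \Bigl(\widetilde{\kappa}^{2} - \frac{\widetilde{\kappa}}{\widetilde{\mu}}\Bigr) e^{\widetilde{\mu}\widetilde{\kappa}} = \frac{1}{\lambda^{2}}\left[\frac{d^{2}}{ds^{2}}\bigl(e^{\mu\kappa}\bigr) + \Bigl(\kappa^{2} - \frac{\kappa}{\mu}\Bigr) e^{\mu\kappa}\right],
\]
because each of the terms $\widetilde{\kappa}^{2}$, $\widetilde{\kappa}/\widetilde{\mu}$, and $d^{2}/d\widetilde{s}^{2}$ acquires exactly one factor of $1/\lambda^{2}$. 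The bracket vanishes by hypothesis, so $\widetilde{\gamma}$ is critical for $\mathbf{\Theta}_{\widetilde{\mu}}$ with $\widetilde{\mu}=\lambda\mu$.

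For the final step I would check that the constant $d$ is preserved (in particular, remains positive). The cleanest way is to note that, in the phase-plane variables \eqref{not}, one has $\widetilde{x} = e^{\widetilde{\mu}\widetilde{\kappa}} = x$ and $\widetilde{y} = \widetilde{x}_{\widetilde{s}} = y/\lambda$, so
\[
F(\widetilde{x},\widetilde{y};\widetilde{\mu}) = \widetilde{\mu}^{2}\widetilde{y}^{2} + (\log\widetilde{x}-1)^{2}\widetilde{x}^{2} = \mu^{2}y^{2} + (\log x - 1)^{2}x^{2} = F(x,y;\mu) = d,
\]
so the orbit of $\widetilde{\gamma}$ in its phase plane has exactly the same constant $d>0$. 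I do not anticipate any real obstacle: the proposition is essentially a scaling identity, and the one thing worth stating explicitly is the precise rescaling $\widetilde{s} = \lambda s$, $\widetilde{\kappa} = \kappa/\lambda$, $\widetilde{\mu} = \lambda\mu$, which makes $\widetilde{\mu}\widetilde{\kappa}$ and hence $e^{\widetilde{\mu}\widetilde{\kappa}}$ invariant.
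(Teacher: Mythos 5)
Your proof is correct and follows essentially the same route as the paper: the same rescaling $\widetilde{s}=\lambda s$, $\widetilde{\kappa}=\kappa/\lambda$, $\widetilde{\mu}=\lambda\mu$, and the same verification that $F(\widetilde{x},\widetilde{y};\widetilde{\mu})=F(x,y;\mu)=d$ via $\widetilde{x}=x$, $\widetilde{y}=y/\lambda$. The only difference is that you additionally check the Euler--Lagrange equation \eqref{EL} directly, whereas the paper's proof records only the invariance of the first integral; your extra step is harmless and, if anything, makes the criticality claim more self-contained.
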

\begin{proof}
Consider a dilation of ratio $\lambda>0$ of $\gamma$, say $\widetilde{\gamma}( s)=\lambda\gamma(s/\lambda)$. Then its curvature $\widetilde{\kappa}$ is $\widetilde{\kappa}( s)=\kappa(s/\lambda)/\lambda $. Moreover $x(s)=\widetilde{x}(\lambda s)$ and $y(s)=\lambda \widetilde{y}(\lambda s)$. If $\widetilde{\mu}=\lambda\mu$, this implies  $d=F(x,y;\mu)=F(\widetilde{x},\widetilde{y};\widetilde{\mu})$. 
\end{proof}

Using Propositions \ref{orientation} and   \ref{unique}, after a change of orientation  and a dilation if necessary, we suppose that the energy index is $\mu=1$. By \eqref{param} and \eqref{Psi(x)}, the parametrization of $\gamma$ in the Euclidean plane is
\begin{equation}\label{paramr2}
\gamma(x)=\frac{1}{\sqrt{d}}\left(x,\int \frac{x\left(\log x-1\right)}{\sqrt{d-x^2\left(\log x-1\right)^2}}\, dx\right).
\end{equation}

We are now in the right position to describe the critical curves of $\mathbf{\Theta}_1$ in the Euclidean plane $\r^2$. In  figure \ref{criticalR2}, we show the shapes of the critical curves and the corresponding rotational surfaces are   shown in figure \ref{surfacesR3}. Here we follow the terminology given in the Introduction. 

\begin{theorem}\label{r2} 
Critical curves with non-constant curvature of $\mathbf{\Theta}_1$ in $\r^2$ are of oval type, simple biconcave type, figure-eight type, non-simple biconcave type, borderline type and orbit-like type.
\end{theorem}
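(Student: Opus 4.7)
The strategy is a phase-plane analysis of the first integral \eqref{fi}, combined with the explicit parametrization \eqref{paramr2}, to read off the shape of $\gamma$ from each orbit. Specializing to $\mu=1$ and $\rho=0$, the first integral becomes the level-set equation
\[
F(x,y)=y^2+x^2(\log x-1)^2=d,\qquad d>0,
\]
in the half-plane $x>0$, with the saddle $P_-=(1,0)$ and the center $P_+=(e,0)$ identified in Section \ref{sec3}. The topology of the level sets is governed by $\widehat{F}(x)=x^2(\log x-1)^2$, which vanishes at $x=0$ and $x=e$, attains a local maximum $\widehat{F}(1)=1$, and tends to $+\infty$ as $x\to\infty$. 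This naturally splits the analysis into the regimes $d<1$, $d=1$ and $d>1$.

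For $0<d<1$ the level set has two connected components. The component around $P_+$ is a closed oval confined to a strip $1<x_2\le x\le x_3$; there $\kappa=\log x>0$ without inflections and $\gamma$ never meets the axis $\zeta$, producing the orbit-like type. The second component lies in $0<x\le x_1<1$, so $\kappa=\log x<0$ and $\gamma$ is free of inflections; by Proposition \ref{sym} the two ends with $x\to 0$ meet $\zeta$ orthogonally, and Remark \ref{regularity} closes the curve into a simple convex arc, the oval type. For $d=1$ the component of the separatrix enclosing $P_+$ lies in $1\le x\le x_3^\ast$ and has $P_-$ as an asymptotic endpoint (in infinite arc length, because $P_-$ is a saddle); since $\kappa\to 0$ at $P_-$, both ends of $\gamma$ become asymptotic to a straight line parallel to $\zeta$, giving the borderline type.

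For $d>1$ the level set is a single orbit with $x\in(0,x_3]$ that crosses $x=1$, hence $\gamma$ has exactly two inflection points per arch, meets $\zeta$ orthogonally at both ends (Proposition \ref{sym}), and extends by reflection (Remark \ref{regularity}) to a curve periodic along $\zeta$. Whether this periodic curve is embedded, pinched at a point of $\zeta$, or self-intersecting is governed by the axial displacement between consecutive contacts with $\zeta$,
\[
\Delta\psi(d)=2\int_{0}^{x_3(d)}\frac{x(\log x-1)}{\sqrt{d-x^2(\log x-1)^2}}\,dx.
\]
The main obstacle is to prove that $\Delta\psi$ changes sign on $(1,\infty)$. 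As $d\to 1^+$ the integrand becomes nearly singular near $x=1$, where $\log x-1=-1$, so the negative contribution around $x=1$ dominates and $\Delta\psi$ is large and negative, yielding simple biconcave (vesicle) profiles; as $d\to\infty$ the turning point $x_3$ grows unboundedly and the positive contribution from $x>e$ eventually prevails, producing non-simple biconcave (immersed) profiles. By the intermediate value theorem there exists a critical $d^\ast>1$ with $\Delta\psi(d^\ast)=0$, at which the two ends coincide on $\zeta$ and the profile becomes figure-eight. Together with Proposition \ref{constant}, which handles the excluded case of constant curvature, this accounts for every orbit of the phase plane and hence every shape listed in the theorem.
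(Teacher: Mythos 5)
Your overall strategy coincides with the paper's: both reduce the classification to the phase portrait of the first integral \eqref{fi} with $\mu=1$, $\rho=0$, split into the regimes $d<1$, $d=1$, $d>1$ according to the critical value $\widehat{F}(x_{-})=1$, and read off the shape of $\gamma$ from the monotonicity of $x$ and of the axial function $\psi$ of \eqref{Psi(x)}. Your treatment of the regimes $d<1$ (oval and orbit-like) and $d=1$ (borderline) matches the paper's cases. For $d>1$ you actually go further than the paper: you prove, via the asymptotics of the displacement integral and the intermediate value theorem, that all three subtypes really occur, whereas the paper only lists the three possibilities according to the sign of $\psi(0)$.

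However, you have the correspondence between the sign of $\Delta\psi$ and the curve type backwards. Since $\psi$ attains its minimum at $x=e$ and $\psi(x_0)=0$, the branch of $\gamma$ with $y\geq 0$ first moves to one side of the symmetry geodesic $\alpha$ and returns to cross $\alpha$ at an interior point precisely when $\psi(0)>0$, i.e. when $\Delta\psi=-2d\,\psi(0)<0$; that interior crossing is a self-intersection with the mirror branch and makes the profile \emph{non-simple} biconcave. Hence the logarithmic divergence $\Delta\psi\to-\infty$ as $d\to 1^{+}$ (which you compute correctly) produces the non-simple, immersed profiles --- consistent with the fact that these curves inherit the loop of the borderline curve at $d=1$ --- while $\Delta\psi>0$ for large $d$ gives the simple (vesicle) profiles. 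The paper's figure \ref{criticalR2} confirms this: $d=1.5$ is non-simple, $d\simeq 2.9$ is figure-eight, $d=3.5$ is simple. Since your intermediate value argument shows that $\Delta\psi$ takes both signs and vanishes somewhere, the existence of all three types survives the swap, but the labelling in your final paragraph must be corrected.
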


\begin{proof} The values of $x_{\pm}$ in \eqref{ll} are $x_{-}=1<e=x_{+}$ with $\widehat{F}(x_{+})=0<1=\widehat{F}(x_{-})$. Thus   the orbit $F(x,y)=d$ has exactly three cuts with the axis $y=0$ if $d<1$ and only one cut if $d> 1$. For $d=1$ we have two cuts, but one of them corresponds with the critical point $P_{-}$ whose associated critical curve is a straight line and it is out of our consideration here.

We classify the critical curves depending on the value of $d$ and the type of orbit (see figure \ref{orbitas}, left). By the symmetry proven in Proposition \ref{sym}, we only consider the branch $y\geq 0$ in the associated orbit. 
\begin{enumerate}
\item Case $d\leq \widehat{F}(x_{-})=1$ and $x_0\leq x_{-}=1$ (see purple orbits). We start at $x=0$ where the curve $\gamma$ cuts orthogonally the geodesic $\beta$ (see Proposition \ref{sym}). Then, it is clear that $x$ increases until its maximum value $x_0\leq x_{-}=1$. At the same time, the function $\psi(x)$ defined in \eqref{Psi(x)}, decreases. Moreover, at $x=x_0$ we have $\psi(x)=0$ which means that the curve cuts the symmetry axis $\alpha$. Thus, $\gamma(x)$ for these values is of oval type.
\item Case $d=\widehat{F}(x_{-})=1$ and $x_0>x_{-}=1$ (see the green orbit). In this case, $x$ is always bigger than $1$, so the curve does not cut $\beta$. When $x\rightarrow 1$, the curvature $\kappa=\log x$ tends to zero, hence  $\gamma$ is asymptotic to a straight line. This straight line is represented in the phase plane by the critical point $P_{-}$. Moreover, differentiating $\gamma(x)$ in \eqref{paramr2}, we see that the tangent of $\gamma$ tends to be vertical when $x\rightarrow 1$. Thus, $\gamma$ is asymptotic to a vertical line. Now, an analysis of the function $\widehat{F}(x)$ tells us that, indeed, $x_0>e$. Then, while $x$ increases from $1$ to $x_0$, the function $\psi(x)$  decreases until $x=e$ and then it increases. Again, at $x=x_0$ we have $\psi(x)=0$. Finally, notice that, by symmetry the curve is non-simple since it must cut the axis $\alpha$ before $x_0$. Therefore, we obtain that $\gamma(x)$ is of borderline type.
\item Case $d>\widehat{F}(x_{-})=1$ (see blue orbits). We start at $x=0$, where the curve meets $\beta$ orthogonally. Then, while $x$ increases until $x_0$, the function $\psi(x)$ in \eqref{Psi(x)}, decreases from $x=0$ to $x=e$ and then it increases. We may have three different types of curves depending on the sign of $\psi(0)$:
\begin{enumerate}
\item If $\psi(0)>0$, due to the behavior of $\psi(x)$,  apart from $x_0$ there must be another cut with $\alpha$ somewhere between $0$ and $x_0$. That is, $\gamma(x)$ is non-simple. Hence, $\gamma(x)$ is of non-simple biconcave type.
\item If $\psi(0)=0$, this means that $\gamma(x)$ cuts $\alpha$ exactly at $x=0$ and $x=x_0$, which gives rise to the figure-eight type curve.
\item Finally, for $\psi(0)<0$, then the curve $\gamma(x)$ only meets the axis $\alpha$ at $x_0$. This case corresponds with  simple biconcave type.
\end{enumerate}
\item Case $0=\widehat{F}(x_{+})<d<\widehat{F}(x_{-})=1$ and $x_0>x_{+}=e$ (see brown orbits). The orbits for these values are closed and, hence, the curvature $\kappa$ of $\gamma(x)$ is periodic. The possible values of $x$ are restricted to lie on an interval, which means that the critical curve is bounded by two vertical lines. Moreover, since $x>1$ necessarily,  $\kappa=\log x>0$ and, therefore, the curve is of orbit-like type. %\textcolor{red}{By Proposition \ref{3.2}, there are no closed ones.}
\end{enumerate}
This concludes the proof.
\end{proof}

\begin{figure}[hbtp]
\begin{center}
\includegraphics[width=.16\textwidth]{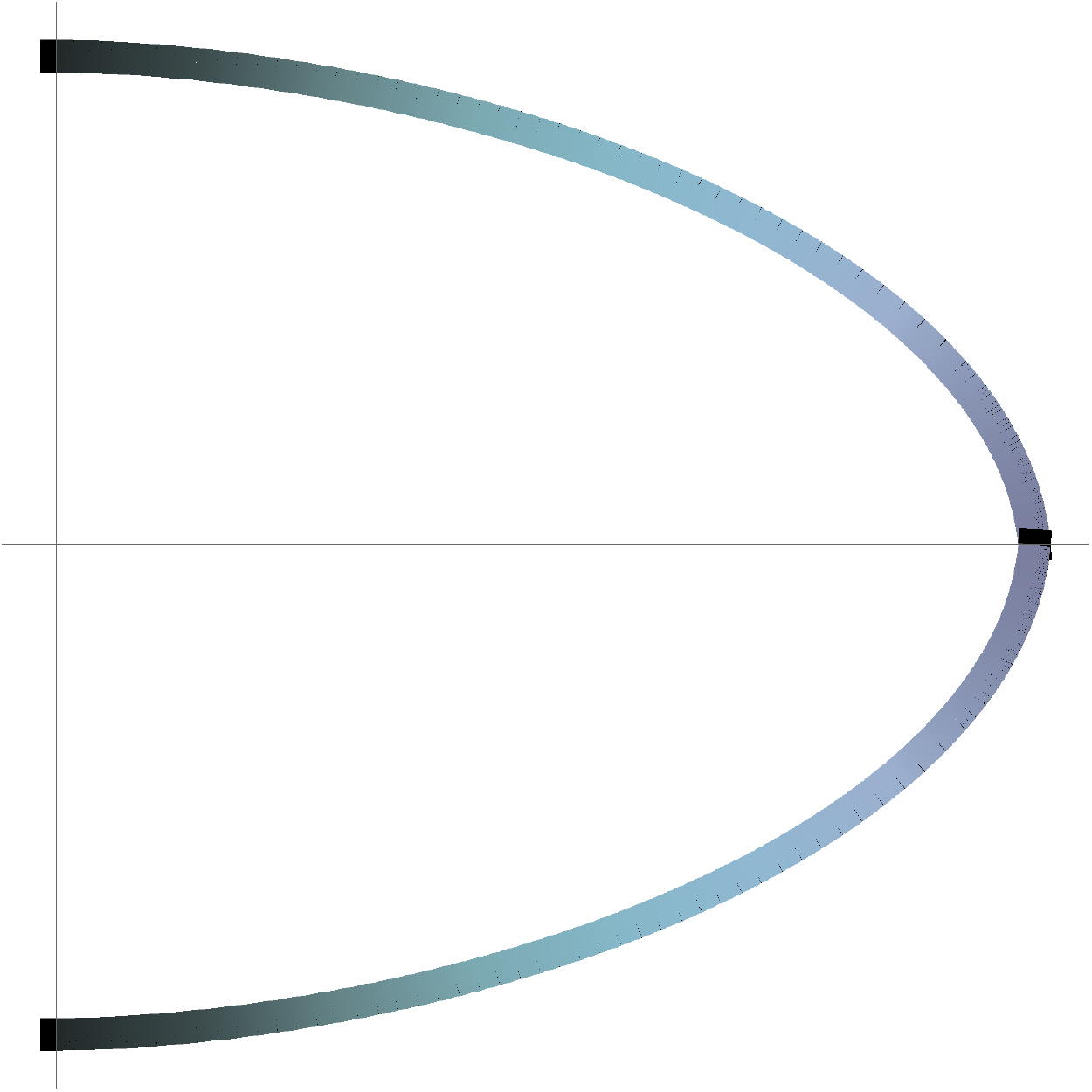}\,\includegraphics[width=.16\textwidth]{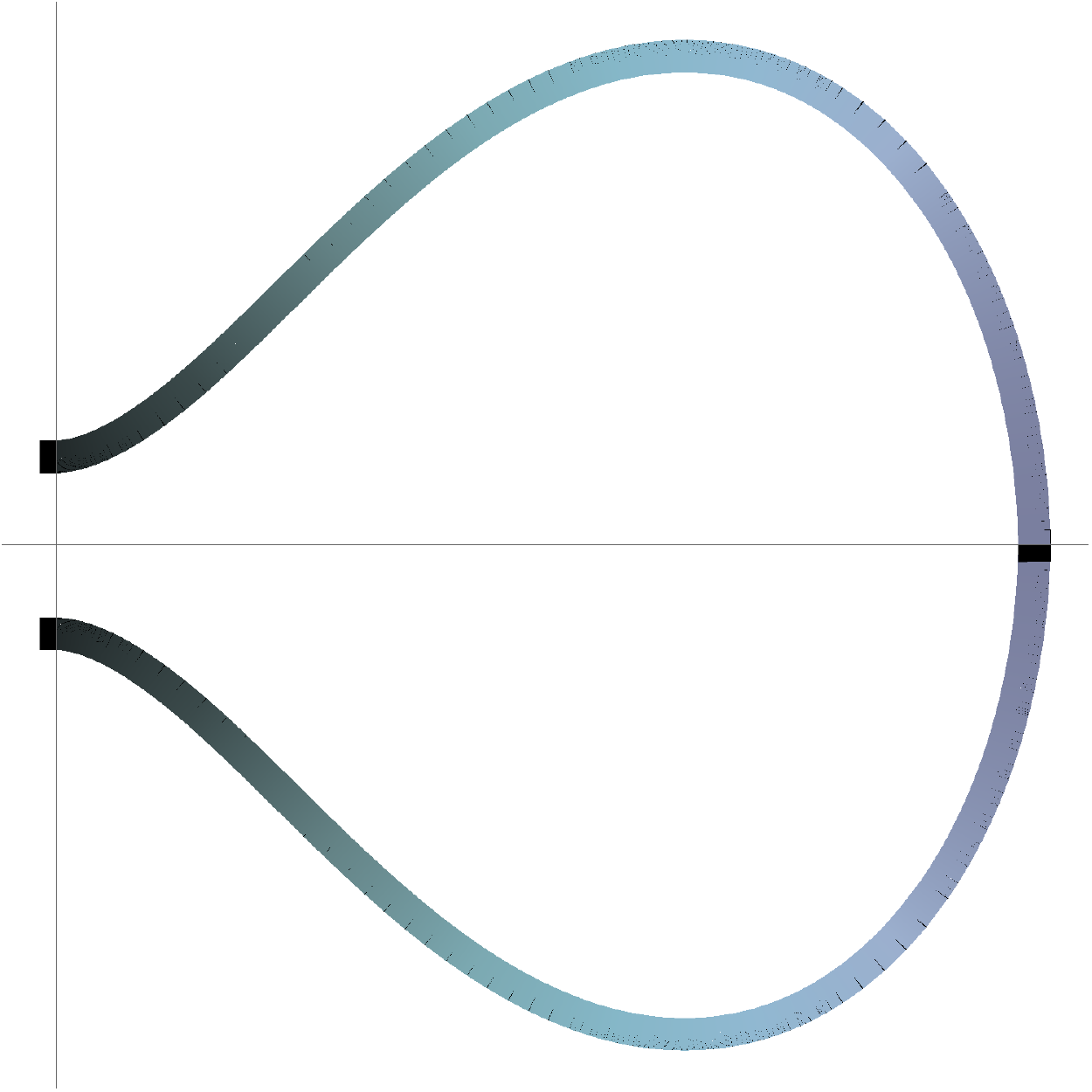}\,\includegraphics[width=.16\textwidth]{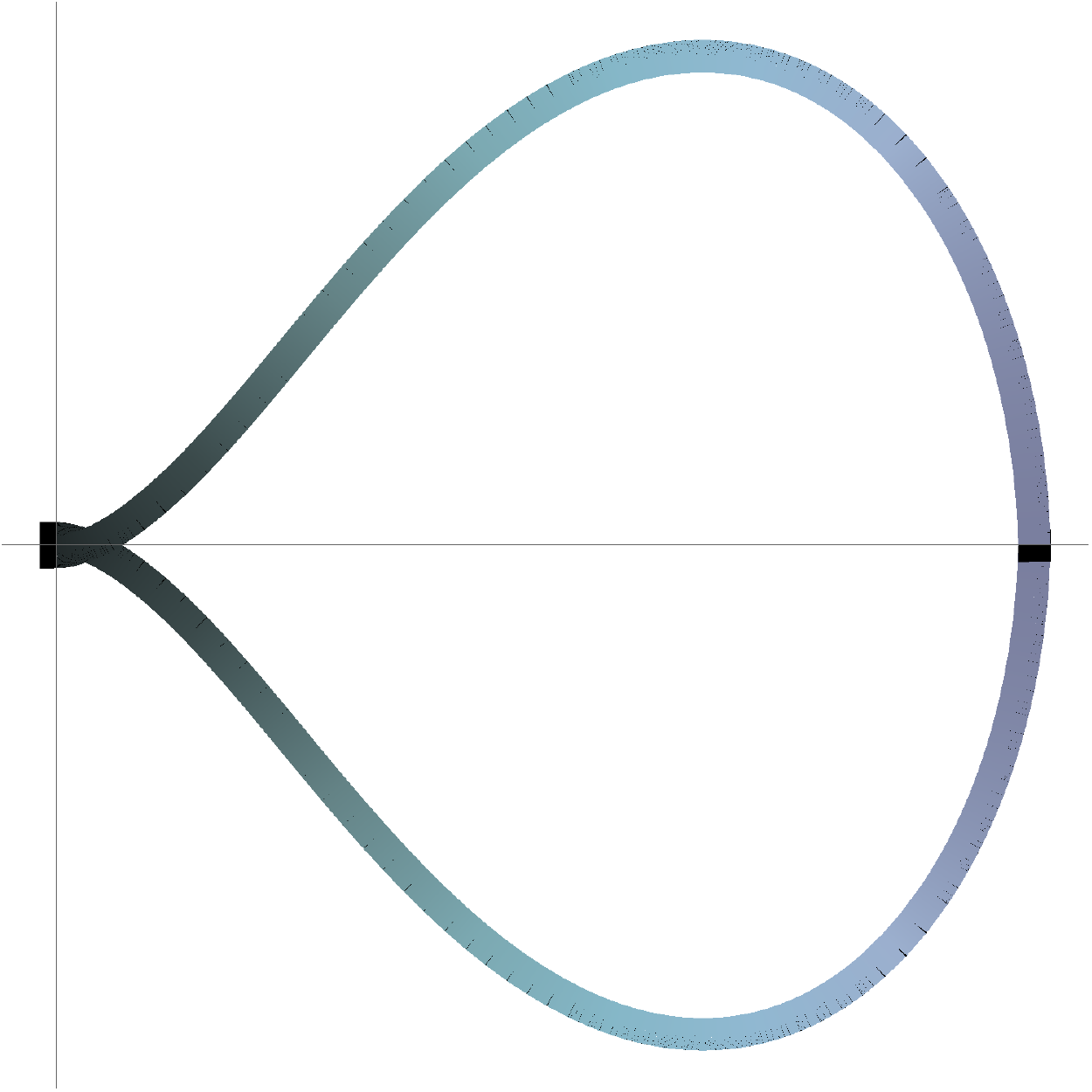}\,\includegraphics[width=.16\textwidth]{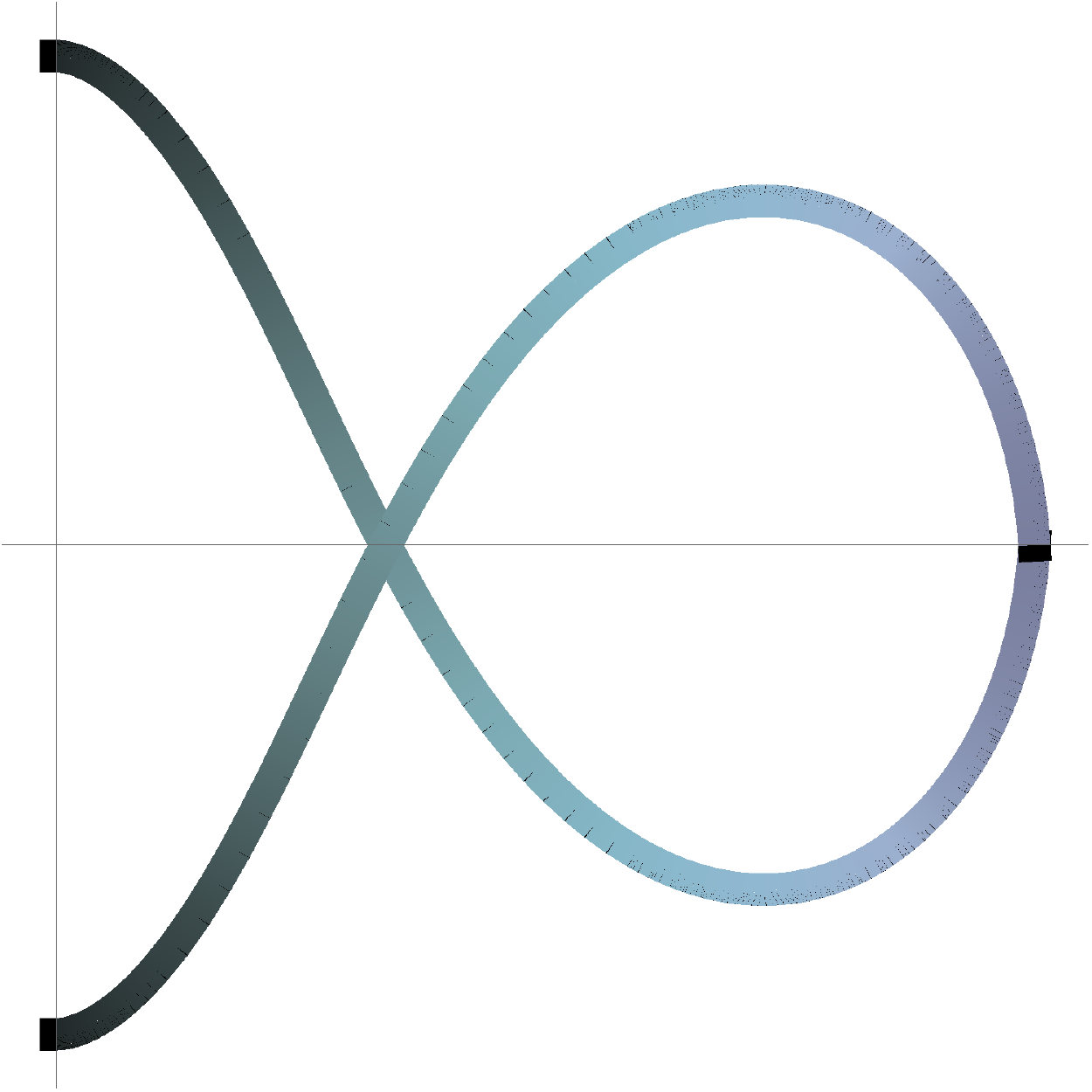}\, \includegraphics[width=.16\textwidth]{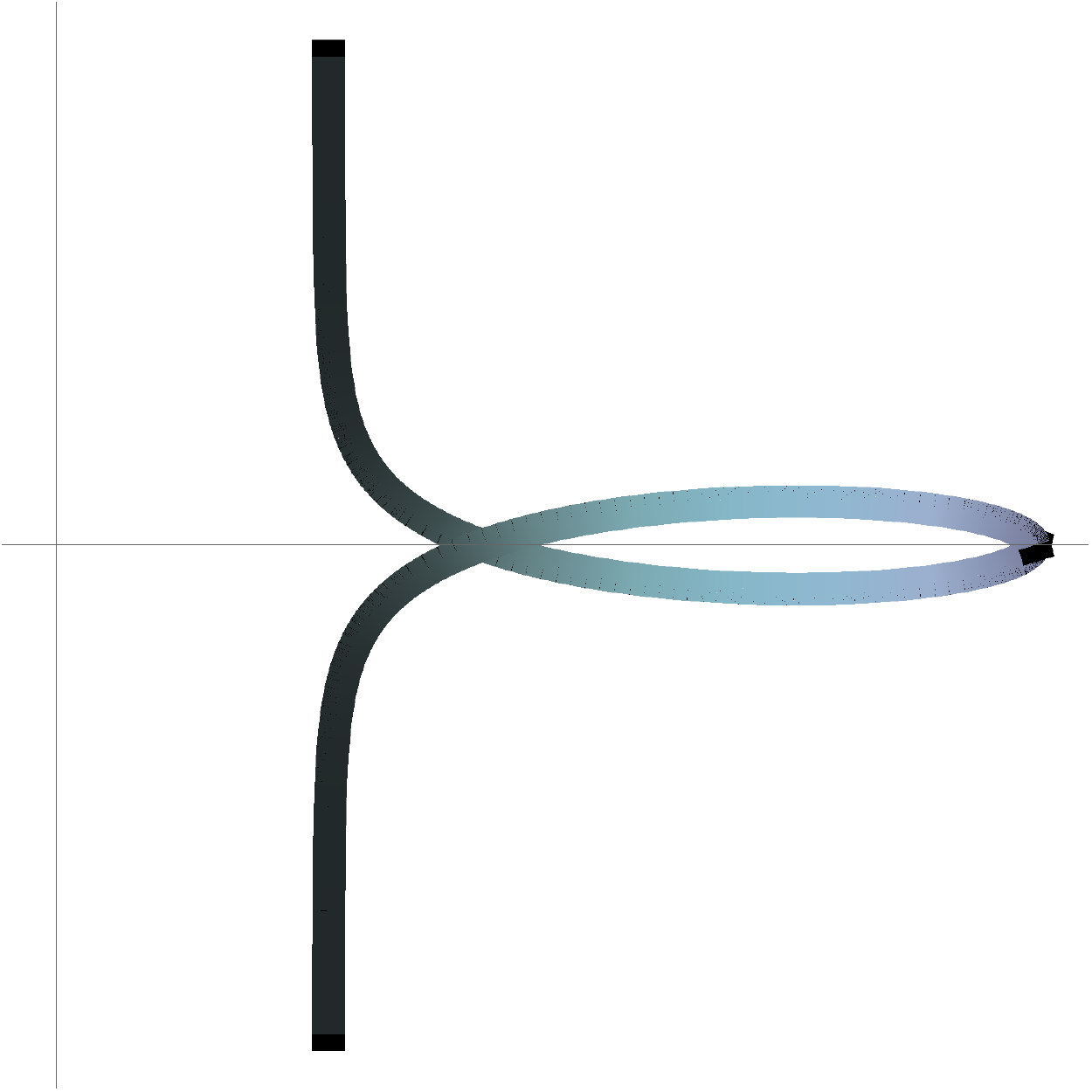}\,\includegraphics[width=.16\textwidth]{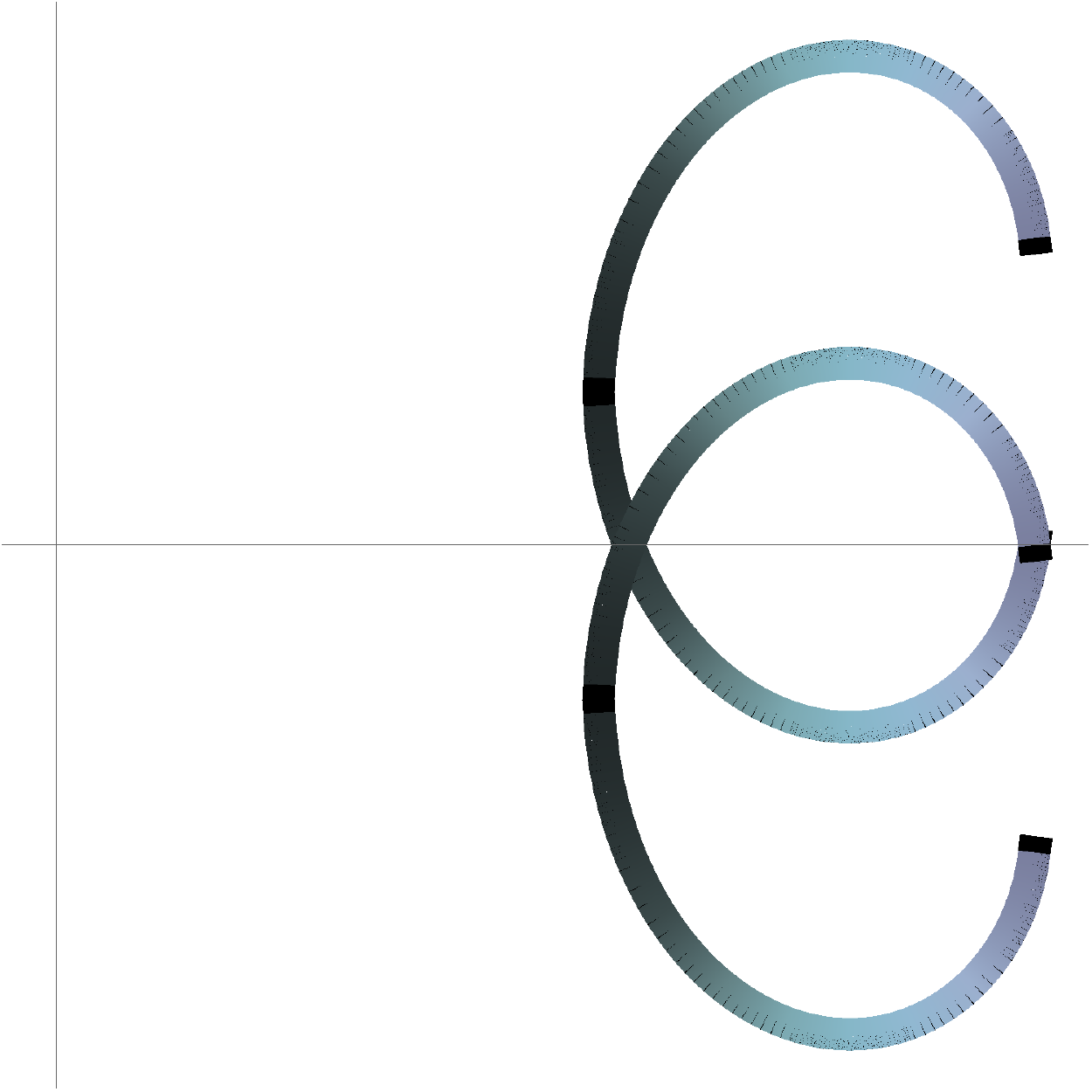}
\end{center}
\caption{Critical curves of $\mathbf{\Theta}_1$ with non-constant curvature in the Euclidean plane $\r^2$. The geodesic $\alpha$ is represented by the horizontal axis, whereas $\beta$ is the vertical one. From left to right:  oval type ($d=0.5$),  simple biconcave type ($d=3.5$), figure-eight type ($d\simeq 2.9$),  non-simple biconcave type ($d=1.5$), borderline type ($d=1$) and orbit-like type ($d=0.5$).}
\label{criticalR2}
\end{figure}

%%%%%%%%%%%%%%%%%%
\subsection{Case $\rho>0$: the sphere $\s^2(\rho)$}
%%%%%%%%%%%%%%%%%%

 After rescaling, we may assume that $\rho=1$, so we are considering the sphere of radius one  $\s^2(1)$. Recall also that  the energy index is   positive (Proposition \ref{orientation}). In this setting, the parametrization \eqref{param}  of $\gamma$ is
\begin{equation}\label{params2}
\gamma(x)=\frac{1}{\sqrt{d}}\left(\mu x,\sqrt{d-\mu^2 x^2}\sin\left(\sqrt{d}\,\psi(x)\right),\sqrt{d-\mu^2x^2}\cos\left(\sqrt{d}\,\psi(x)\right)\right)
\end{equation}
where the function $\psi(x)$ is given in \eqref{Psi(x)}.

The classification of the critical curves of $\mathbf{\Theta}_\mu$ depends on the value of $\mu>0$. First we consider the case $\mu<1/2$: see figure \ref{criticalS2}. 

\begin{theorem}\label{r3} 
If  $\mu<1/2$, then  the critical curves of $\mathbf{\Theta}_\mu$ in $\s^2(1)$ with non-constant curvature are of oval type, simple biconcave type, figure-eight type, non-simple biconcave type, borderline type and orbit-like type.
\end{theorem}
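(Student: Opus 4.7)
The plan is to mirror the case-analysis of Theorem \ref{r2}, exploiting the fact that for $\mu<1/2$ the phase portrait of \eqref{php} retains the same qualitative structure: two distinct stationary points $P_{\pm}$, with $P_{+}$ a centre and $P_{-}$ a saddle (see Section \ref{sec3}). The first step is to analyse the effective potential
\[
\widehat F(x)=x^{2}\!\left((\log x-1)^{2}+\mu^{2}\right),
\]
showing that it vanishes at $0$, rises to a local maximum $\widehat F(x_{-})=x_{-}^{2}\log x_{+}$, descends to a local minimum $\widehat F(x_{+})=x_{+}^{2}\log x_{-}$, and then grows to $\infty$. Since $\mu<1/2$ one has $1<x_{-}<x_{+}$, whence $0<\widehat F(x_{+})<\widehat F(x_{-})$, and the level sets of $F$ split into the same four families of orbits that appeared in the Euclidean case.

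Next, I would partition the admissible values of $d>0$ by the critical levels $\widehat F(x_{\pm})$ and, in each regime, extract the shape of $\gamma$ from the parametrization \eqref{params2} together with Propositions \ref{sym} and \ref{pole}. For $d<\widehat F(x_{-})$ with $x_{0}<x_{-}$, the branch $y\ge 0$ runs from $(0,\sqrt{d}/\mu)$ to $(x_{0},0)$, so $\gamma$ meets $\beta$ orthogonally at $x=0$ and cuts $\alpha$ at $x_{0}$, giving the oval type. For $\widehat F(x_{+})<d<\widehat F(x_{-})$ with $x_{0}>x_{-}$, the orbit is a closed loop around $P_{+}$ bounded away from $x=0$; the curvature is periodic and $\gamma$ never meets $\beta$, giving the orbit-like type. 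For $d=\widehat F(x_{-})$ with $x_{0}>x_{-}$, the separatrix gives $\kappa\to\kappa_{0}^{-}=(\log x_{-})/\mu$ as $x\to x_{-}$, so by Proposition \ref{constant} $\gamma$ is asymptotic to the parallel circle associated with $P_{-}$, yielding the borderline type. Finally, for $d>\widehat F(x_{-})$ the orbit surrounds both equilibria and passes through $x=0$; the three biconcave subtypes are distinguished by the sign of $\psi(0)$ read off from \eqref{Psi(x)}, exactly as in the Euclidean case.

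The main obstacle, specific to the spherical setting, is the possibility that $\gamma$ crosses a pole of $\mathbb{S}^{2}(1)$: by Proposition \ref{pole} this occurs precisely when $d=\mu^{2}e^{2}$ and $x_{0}\ge e$, at which value the denominator $d-\mu^{2}x^{2}$ in \eqref{Psi(x)} vanishes. Depending on $\mu\in(0,1/2)$ this critical level may fall into the orbit-like range, onto the separatrix, or into the biconcave range, so for each subcase I would need to verify that the pole-crossing is consistent with the asserted shape and that the parametrization extends continuously through $x=e$. A secondary point, already present in the Euclidean case, is to confirm that all three signs of $\psi(0)$ are actually realised as $d$ ranges over $(\widehat F(x_{-}),\infty)$; this I would treat by continuity of the map $d\mapsto\psi(0)(d)$ together with a limit analysis as $d\downarrow\widehat F(x_{-})$ and as $d\to\infty$, applying the Intermediate Value Theorem to produce the figure-eight value.
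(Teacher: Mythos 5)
Your overall strategy is the paper's: reduce to the phase-portrait of \eqref{php}, note that for $\mu<1/2$ the two stationary points persist with $0<\widehat{F}(x_{+})<\widehat{F}(x_{-})$, and repeat the four-case analysis of Theorem \ref{r2} using the monotonicity of $\psi$ and the symmetry of Proposition \ref{sym}. However, there is one genuine gap, and it is precisely the feature that distinguishes the spherical case from the Euclidean one. Your last regime, ``for $d>\widehat F(x_{-})$ the three biconcave subtypes are distinguished by the sign of $\psi(0)$, exactly as in the Euclidean case,'' is not correct as stated. The trichotomy by the sign of $\psi(0)$ relies on $\psi$ having an interior minimum at $x=e$ (it decreases on $(0,e)$ and increases for $x>e$), which forces an intermediate cut with $\alpha$ when $\psi(0)>0$; this requires $x_0>e$, i.e.\ $d>\widehat F(e)=\mu^2e^2$. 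In $\r^2$ one has $x_{+}=e$ and $\widehat F(e)=0<\widehat F(x_{-})$, so $d>\widehat F(x_{-})$ automatically gives $x_0>e$. On $\s^2(1)$ with $\mu<1/2$ one instead has $x_{+}<e$, and for $\mu$ close to $1/2$ the interval $\bigl(\widehat F(x_{-}),\widehat F(e)\bigr]$ is nonempty (indeed $\widehat F(x_{\pm})\to e/2$ while $\widehat F(e)=\mu^2e^2\to e^2/4>e/2$). For $d$ in that interval the orbit does pass through $x=0$ but its single cut satisfies $x_{+}<x_0\le e$, so $\psi$ is monotone on $(0,x_0)$, $\psi(0)>0$, and there is no intermediate zero: the curve cuts $\alpha$ only at $x_0$ and is of oval type. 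Your rule would misclassify it as non-simple biconcave. The paper's proof handles this by explicitly adjoining the range $d\in\bigl(\widehat F(x_{-}),\widehat F(e)\bigr]$ to the oval case and restricting the biconcave trichotomy to $d>\max\{\widehat F(x_{-}),\widehat F(e)\}$.

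Two smaller points. First, in the orbit-like regime the closed loops around $P_{+}$ have $x_0>x_{+}$ (the third root of $\widehat F(x)=d$), not merely $x_0>x_{-}$; the paper also uses the bound $x>x_{-}>1$ to conclude $\kappa>0$ and to locate $\gamma$ inside the spherical cap bounded by the circle of $P_{-}$. Second, your two ``obstacles'' (continuity of the parametrization through the pole, and the realization of all three signs of $\psi(0)$ via the Intermediate Value Theorem) are reasonable concerns, but you only announce them; the paper does not address the second at all and disposes of the first through Proposition \ref{pole} and the surrounding remark, since pole-crossing does not change the curve's type.
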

\begin{proof} If $\mu<1/2$, then  $1<x_{-}<\sqrt{e}< x_{+}<e$ and $0<\widehat{F}(x_{+})<\widehat{F}(x_{-})$. Then, the orbit $F(x,y)=d$ has exactly three cuts with the axis $y=0$ if $\widehat{F}(x_{+})<d<\widehat{F}(x_{-})$. Otherwise, we have only one cut. Again, note that for $d=\widehat{F}(x_{-})$ we have two cuts, but one of them corresponds with the critical point $P_{-}$ whose associated critical curve has constant curvature and, therefore, it is out of our consideration here.

Now, we classify the critical curves depending on the value of $d$ and the type of orbit, which can be described by the value $x_0$. We argue as in Theorem \ref{r2} and, by Proposition \ref{sym}, we only consider the branch $y\geq 0$ of each orbit. There are four different cases (see figure \ref{orbitas}, left):
\begin{enumerate}
\item Case $d\leq\widehat{F}(x_{-})$ and $x_0\leq x_{-}$ or $d\in\left(\widehat{F}(x_{-}),\widehat{F}(e)\right]$ (see purple orbits). The parameter of the curve increases from $x=0$ (at this point, $\gamma$ meets orthogonally the geodesic $\beta$) to $x=x_0$ (where $\gamma$ cuts the geodesic $\alpha$). At the same time, the function $\psi(x)$, \eqref{Psi(x)}, which represents the variation angle of $\gamma(x)$ (see \eqref{params2}) decreases. Therefore, the curve is of oval type.
\item Case $d=\widehat{F}(x_{-})$ and $x_0>x_{-}$ (see the green orbit). The curve $\gamma$ is asymptotic in its end-points to the critical circle represented by $P_{-}$. On other points, the $x_1$ component of \eqref{params2} is even further to the geodesic $\beta$, so $\gamma$ never cuts it. The behavior of the variation angle $\psi(x)$, \eqref{Psi(x)}, while $x$ increases from $x_{-}$ to $x_{0}$ is as follows: it decreases until $x=e$ and then increases. Also $\psi(x_{0})=0$ so that $\gamma$ cuts $\alpha$ at $x=x_0$. Due to the evolution of the variation angle, $\gamma$ cuts $\alpha$ in another intermediate point producing, after symmetry, a single loop, i.e. $\gamma$ is of borderline type.
\item Case $d>{\rm max}\{\widehat{F}(x_{-}),\widehat{F}(e)\}$ (see blue orbits). The curve $\gamma(x)$ at $x=0$ cuts the geodesic $\beta$ orthogonally. The function $\psi(x)$, \eqref{Psi(x)}, behaves as in previous point as $x$ increases. Therefore, depending on the sign of $\psi(0)$, we have:
\begin{enumerate}
\item If $\psi(0)>0$, then $\gamma$ cuts the geodesic $\alpha$ at $x=x_0$ and in another intermediate point in $\left(0,x_0\right)$, giving rise to a curve of  non-simple biconcave type.
\item If $\psi(0)=0$, then $\gamma$ cuts $\alpha$ at, precisely, $x=0$ and $x=x_0$, i.e. it is a figure-eight type curve.
\item If $\psi(0)<0$, then the only point where $\gamma$ cuts $\alpha$ is at $x=x_0$. Thus, the curve is simple. In fact, we have a  simple biconcave type curve.
\end{enumerate}
\item Case $\widehat{F}(x_{+})<d<\widehat{F}(x_{-})$ and $x_0>x_{+}$ (see brown orbits). The closure condition of the orbits in this case implies that the curvature $\kappa=\log\left(x\right)/\mu$ of $\gamma(x)$ is periodic. On the other hand, since $x>x_{-}$ for these orbits, we get that $\kappa>0$ so that $\gamma$ has no inflection points. Moreover, the same lower bound for the parameter $x$ implies that the curve lies in the smallest spherical cap whose boundary is the critical circle represented by $P_{-}$. In conclusion, $\gamma$ is of orbit-like type. 
%\textcolor{red}{Infinitely many of these curves are closed, provided that the condition \eqref{lambda} of Proposition \ref{3.2} holds.}
\end{enumerate}
This concludes the proof.
\end{proof}

\begin{figure}[hbtp]
\begin{center}
\includegraphics[width=.16\textwidth]{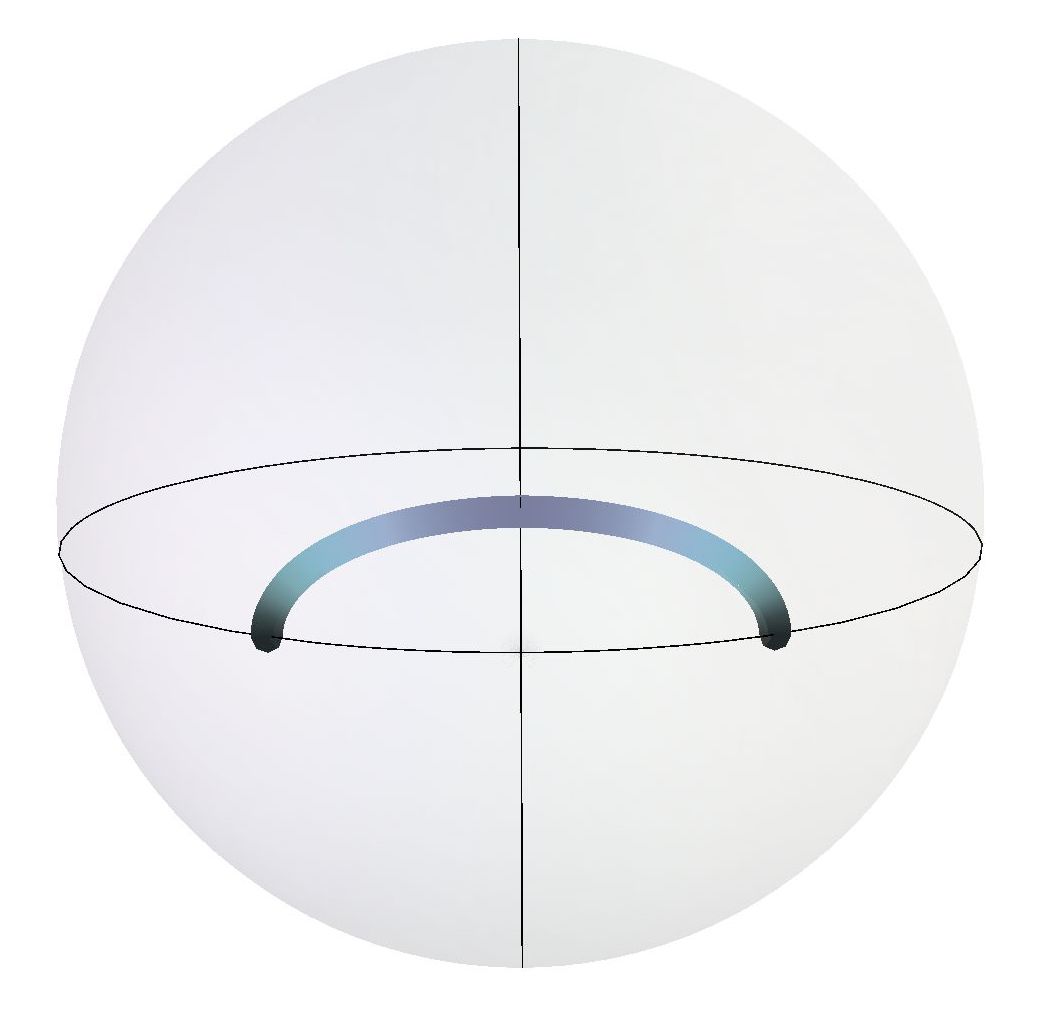}\,\includegraphics[width=.16\textwidth]{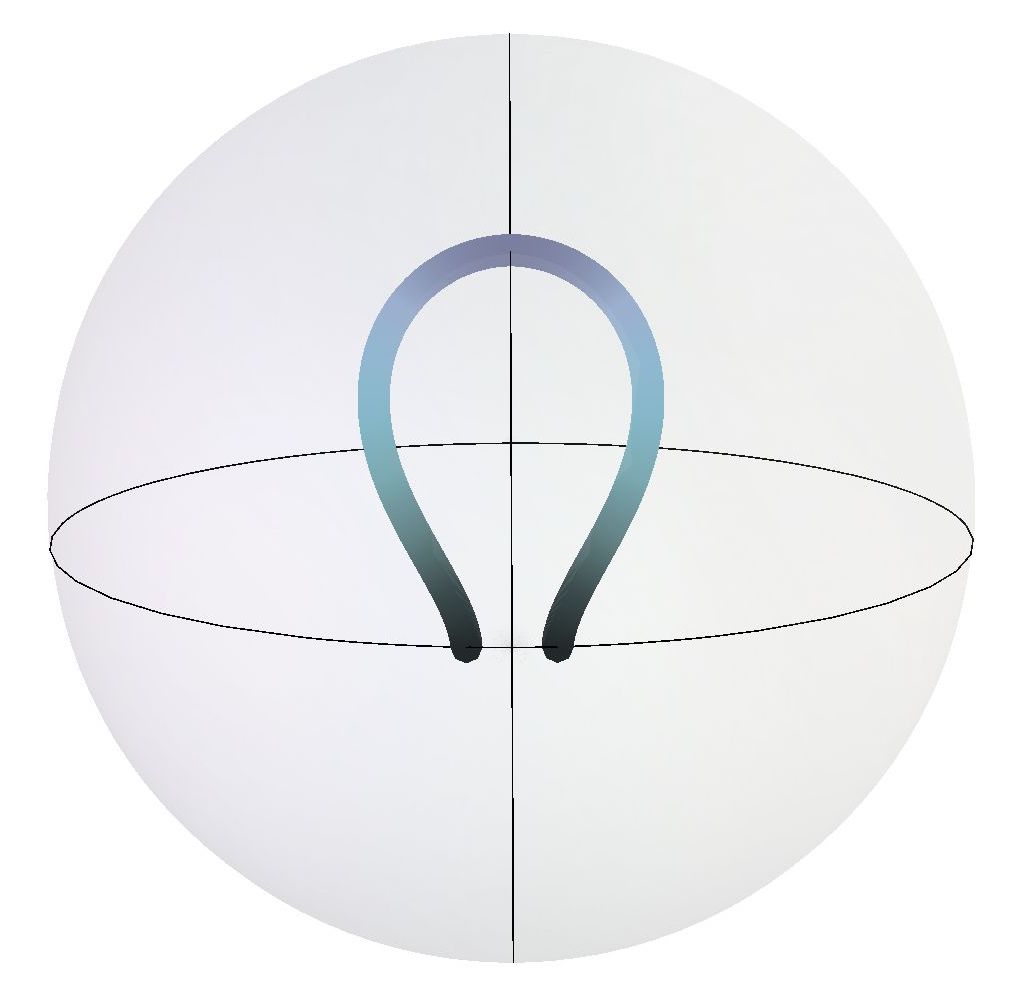}\,\includegraphics[width=.165\textwidth]{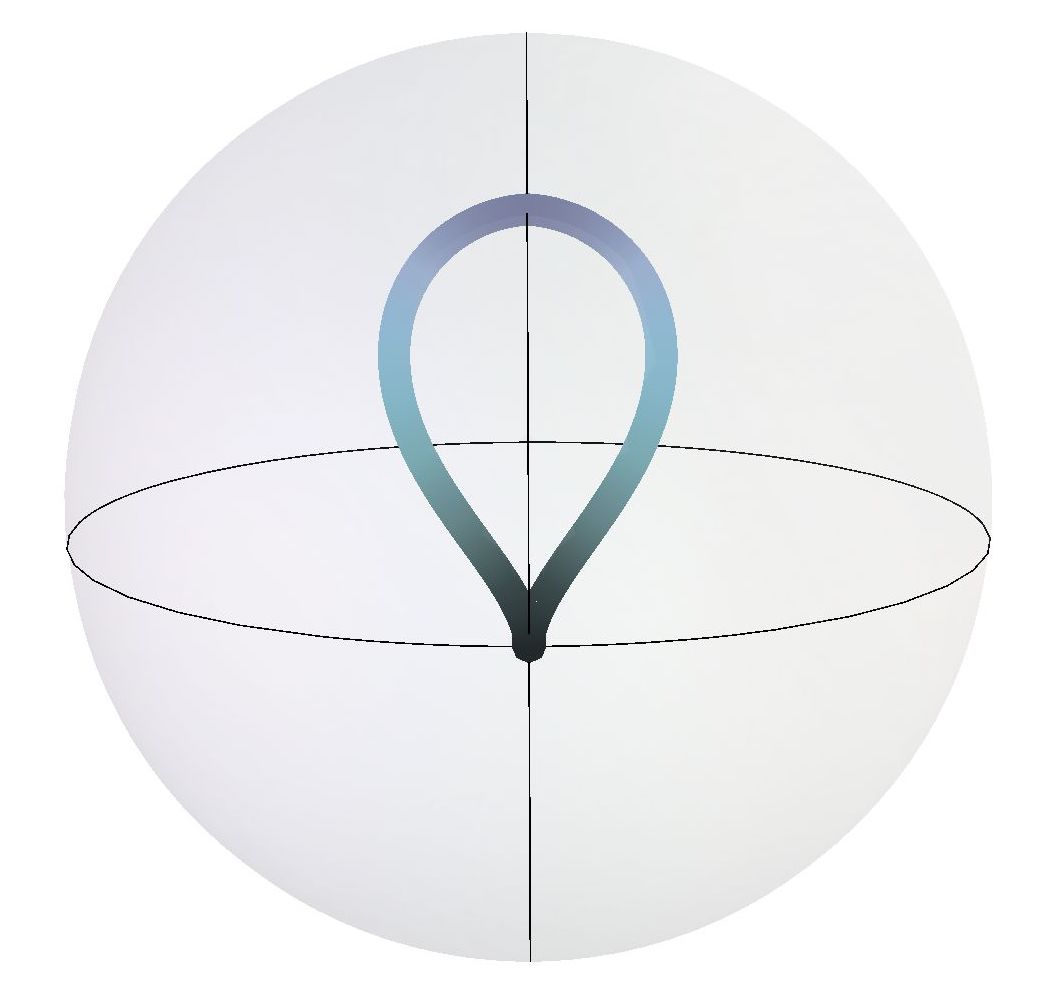}\,\includegraphics[width=.162\textwidth]{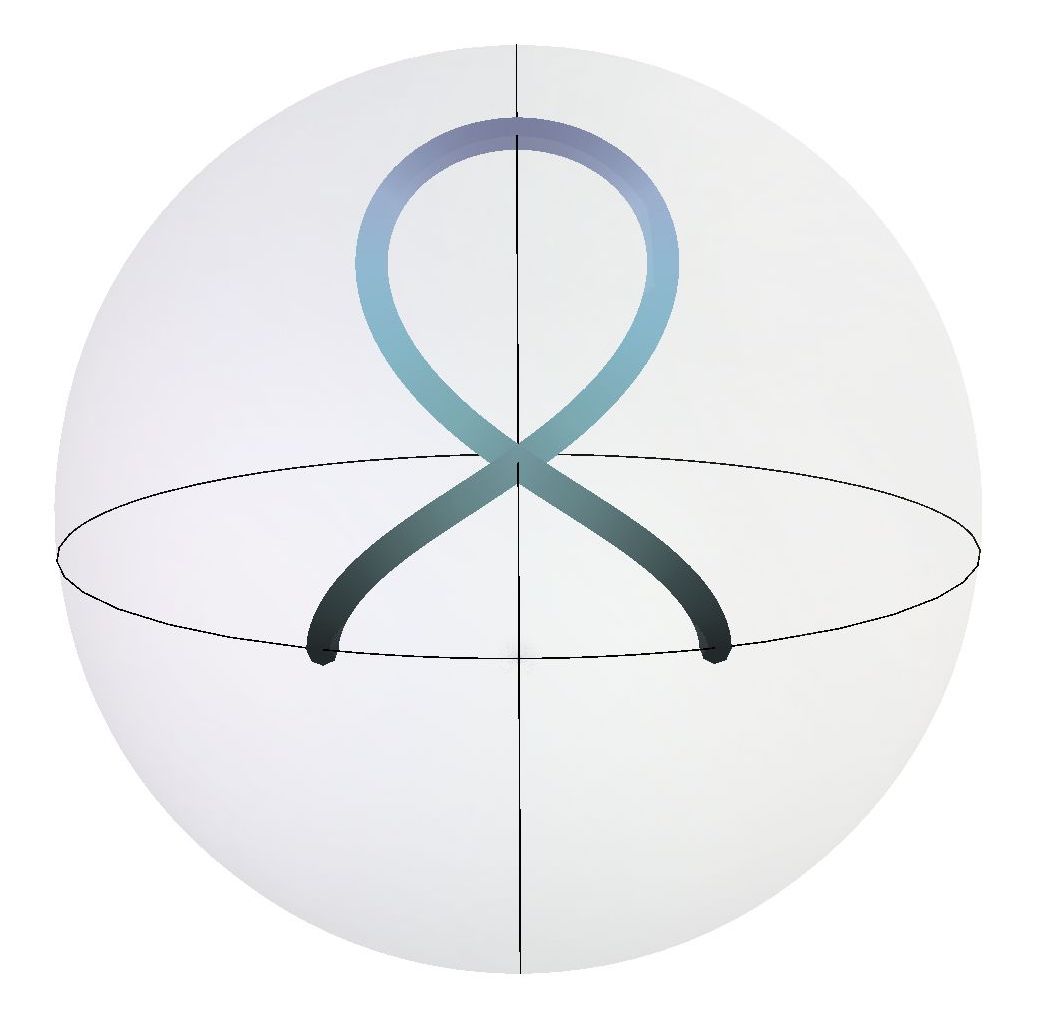}\,\,\includegraphics[width=.155\textwidth]{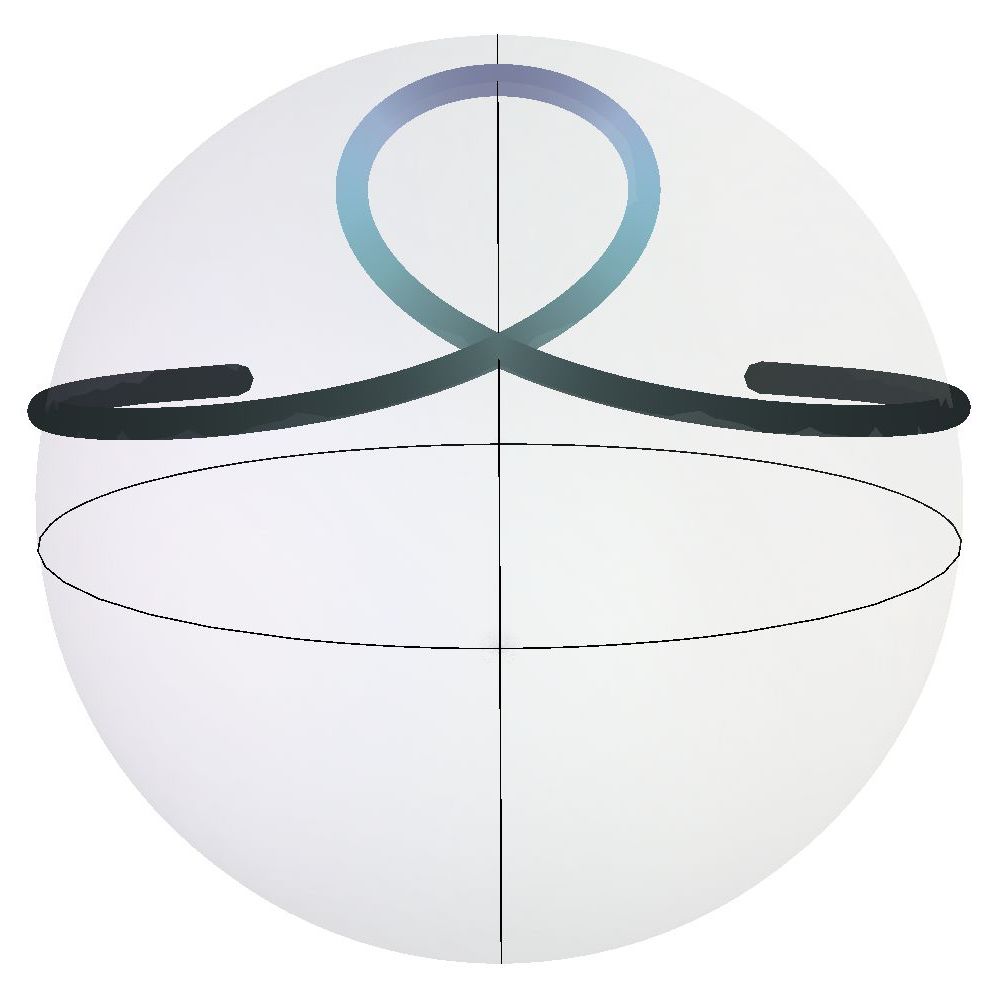}\,\,\includegraphics[width=.16\textwidth]{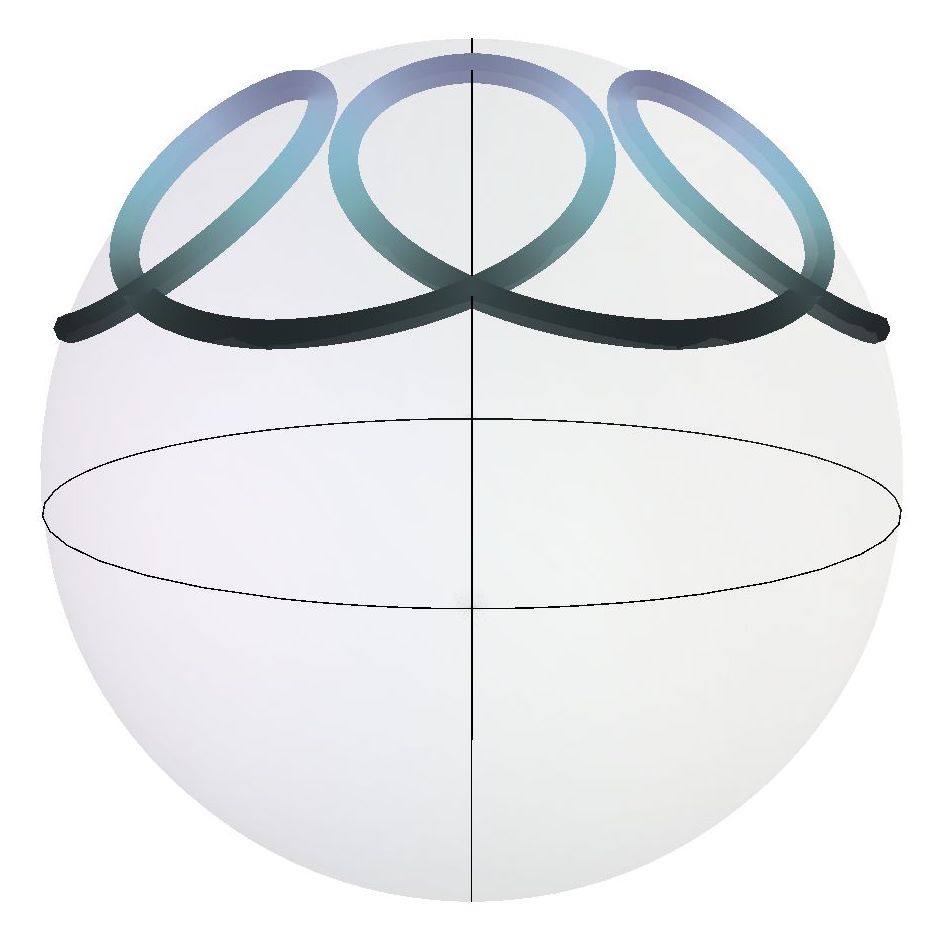}
\end{center}
\caption{Critical curves of $\mathbf{\Theta}_\mu$ with non-constant curvature in the sphere $\s^2(1)$. The geodesic $\alpha$ is represented by the meridian, whereas $\beta$ is the equator. Here $\mu=1/4$ and, from left to right:  oval type  ($d=0.95$),   simple biconcave type  ($d=4$),  figure-eight type ($d\simeq 2.9$),  non-simple biconcave type  ($d=1.5$), borderline type ($d\simeq 1.067$) and orbit-like type ($d=0.95$).}
\label{criticalS2}
\end{figure}

We now consider the case $\mu\geq 1/2$: see figure \ref{criticalS2} again. 

\begin{theorem}\label{r4}
The critical curves with non-constant curvature of $\mathbf{\Theta}_\mu$ for $\mu\geq 1/2$ in $\s^2(1)$ are of oval type, simple biconcave type, figure-eight type and non-simple biconcave type.
\end{theorem}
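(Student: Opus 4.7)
The plan is to adapt the proof of Theorem \ref{r3} to the regime $\mu \geq 1/2$ in $\s^2(1)$, where the phase plane simplifies dramatically. By Proposition \ref{constant} together with the central and right panels of Figure \ref{orbitas}, there is at most one equilibrium in the phase plane: a degenerate point at $(\sqrt{e},0)$ when $\mu = 1/2$ and no equilibria at all for $\mu > 1/2$. Consequently, neither the saddle separatrices (which produced the borderline type in Theorem \ref{r3}) nor the closed orbits around a centre (which produced the orbit-like type) can occur, and my goal reduces to showing that only the four remaining shapes arise.

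Concretely, I would first differentiate $\widehat{F}(x) = x^2\bigl[(\log x - 1)^2 + \mu^2\bigr]$ to obtain
$$\widehat{F}'(x) = 2x\bigl[(\log x - 1)^2 + (\log x - 1) + \mu^2\bigr].$$
The quadratic in $\log x - 1$ inside the brackets has discriminant $1 - 4\mu^2 \leq 0$ for $\mu \geq 1/2$, so $\widehat{F}$ is monotonically non-decreasing on $(0, \infty)$; in particular, $\widehat{F}(x) = d$ admits a unique solution $x_0 > 0$ for every $d > 0$, and each orbit is a single arc with $x \in [0, x_0]$. By Proposition \ref{sym} I restrict to the branch $y \geq 0$, along which $x$ increases from $0$ (where $\gamma$ meets $\beta$ orthogonally) to $x_0$ (where $\psi(x_0) = 0$, so $\gamma$ meets $\alpha$). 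Using that $\psi'(x) < 0$ on $(0, e)$ and $\psi'(x) > 0$ on $(e, +\infty)$, the classification splits according to whether $d \leq e^2\mu^2 = \widehat{F}(e)$ or $d > e^2\mu^2$. In the first case, $x_0 \leq e$ and $\psi$ is monotonically decreasing on $[0, x_0]$, so $\gamma$ crosses $\alpha$ only at $x_0$, yielding an oval type curve. In the second case, $x_0 > e$ and $\psi$ first decreases to a negative minimum at $x = e$ before returning to $\psi(x_0) = 0$, so the sign of $\psi(0)$ distinguishes non-simple biconcave ($\psi(0) > 0$), figure-eight ($\psi(0) = 0$), and simple biconcave ($\psi(0) < 0$).

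The main obstacle is the monotonicity of $\widehat{F}$: this is the single new ingredient beyond Theorem \ref{r3}, and everything else is a direct adaptation of the argument already used there. Once $\widehat{F}$ is monotone, the absence of closed orbits and separatrices is automatic, as is the bijection between orbit type and the four shapes in the statement. Verifying that all three sub-cases of the second scenario actually occur, that is, that $\psi(0)$ attains positive, zero, and negative values as $d$ varies over $(e^2\mu^2, +\infty)$, follows by a continuity argument in $d$ exactly analogous to the implicit reasoning of Theorem \ref{r3}.
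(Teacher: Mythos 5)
Your proposal is correct and follows essentially the same route as the paper: the paper's proof simply observes that for $\mu\geq 1/2$ the stationary points of the phase plane are absent or degenerate, so $\widehat{F}$ is monotone, each orbit meets $y=0$ once, and only cases 1 and 3 of Theorem \ref{r3} survive. Your explicit computation of $\widehat{F}'(x)$ and its discriminant merely makes the monotonicity claim concrete; the rest is the same case analysis.
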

\begin{proof} In this case, either there are no critical points of $F(x,y)$ or the critical point is degenerate (this case appears if  $\mu=1/2$). As a consequence, the function $\widehat{F}(x)$ is monotone and the orbits $F(x,y)=d$ have only one cut each with the axis $y=0$. See figure \ref{orbitas}, center and right. Therefore, only cases 1 and 3 of Theorem \ref{r3} can occur, drawing the result.
\end{proof}

\begin{remark} By Proposition \ref{pole}, the  critical curves of $\mathbf{\Theta}_\mu$ in $\s^2(1)$ pass through the pole if and only if $d=\mu^2e^2$ and $x_0\geq e$. If $\mu< 1/2$, this can happen in critical curves of  non-simple biconcave type, borderline type and orbit-like type. If $\mu\geq 1/2$, then it can happen in critical curves of  simple biconcave type, figure-eight type and oval type. See figure \ref{criticalS2Pole}.
\end{remark}

\begin{figure}[hbtp]
\begin{center}
\includegraphics[width=.16\textwidth]{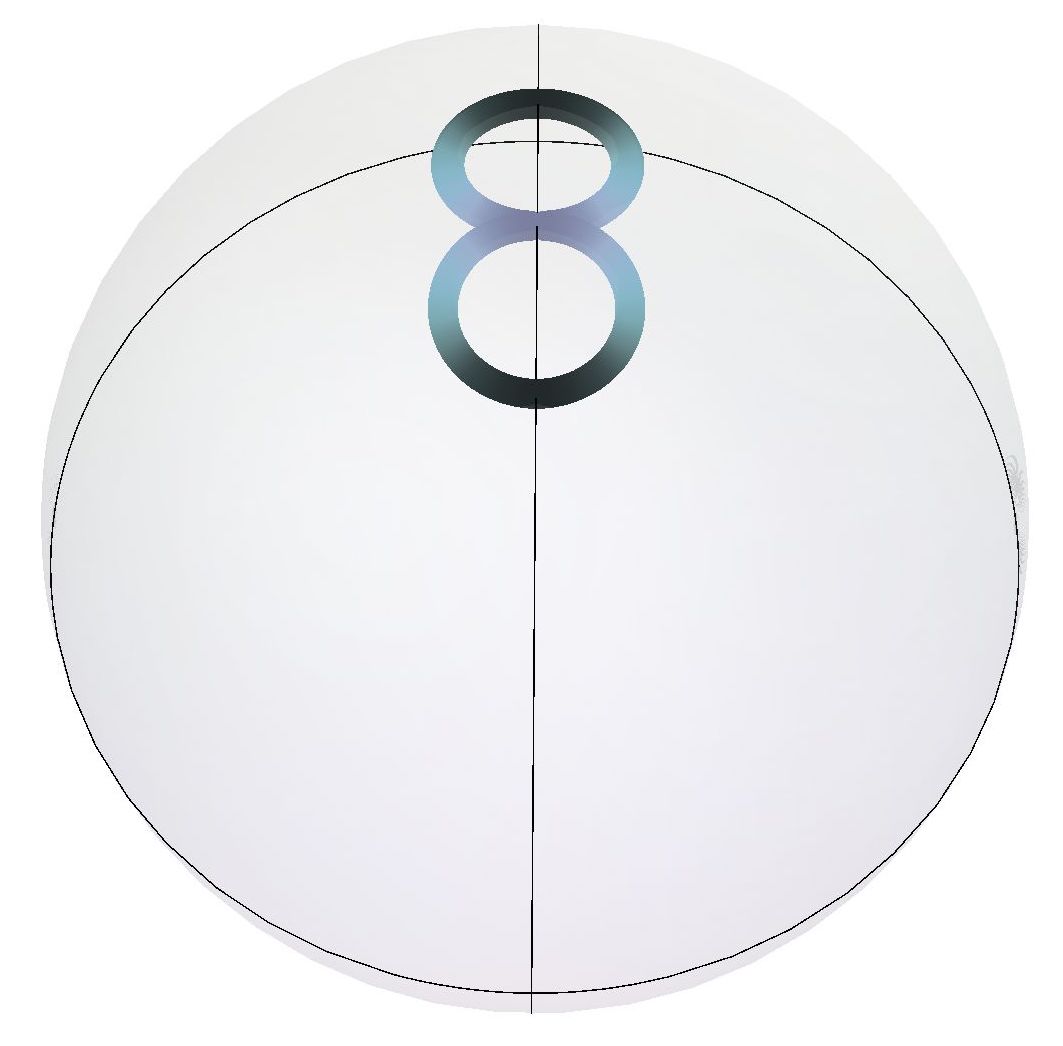}\quad\quad\includegraphics[width=.16\textwidth]{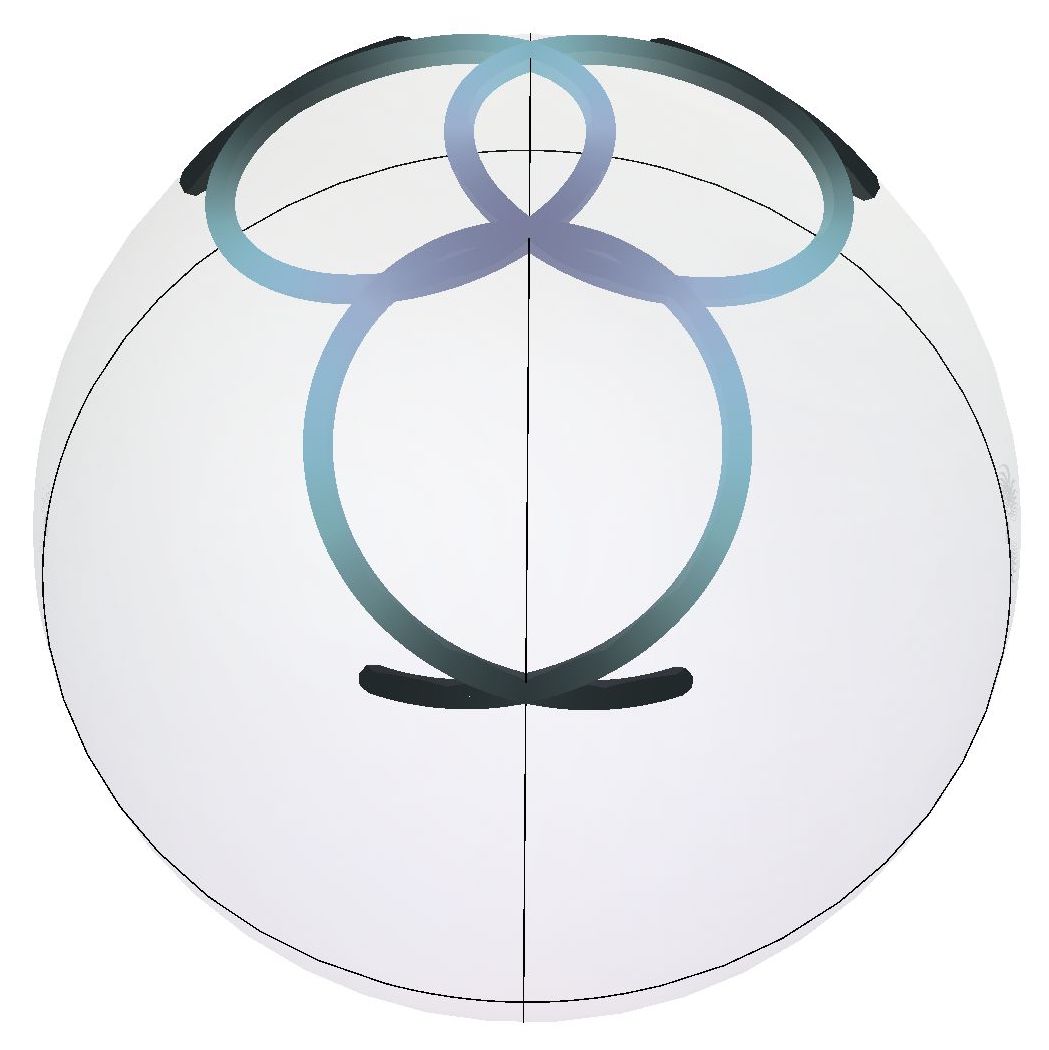}\quad\quad\includegraphics[width=.16\textwidth]{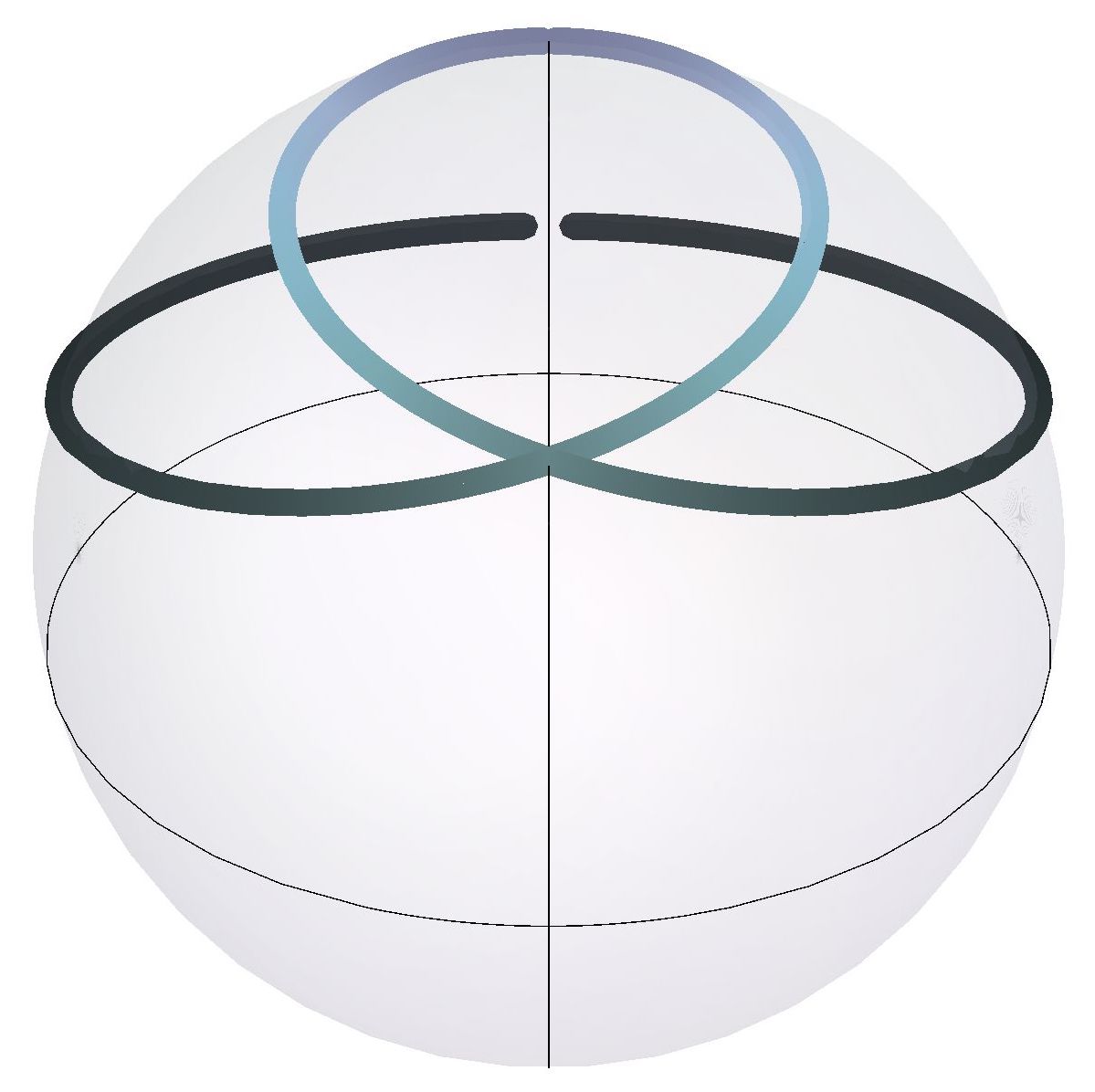}\quad\quad\includegraphics[width=.164\textwidth]{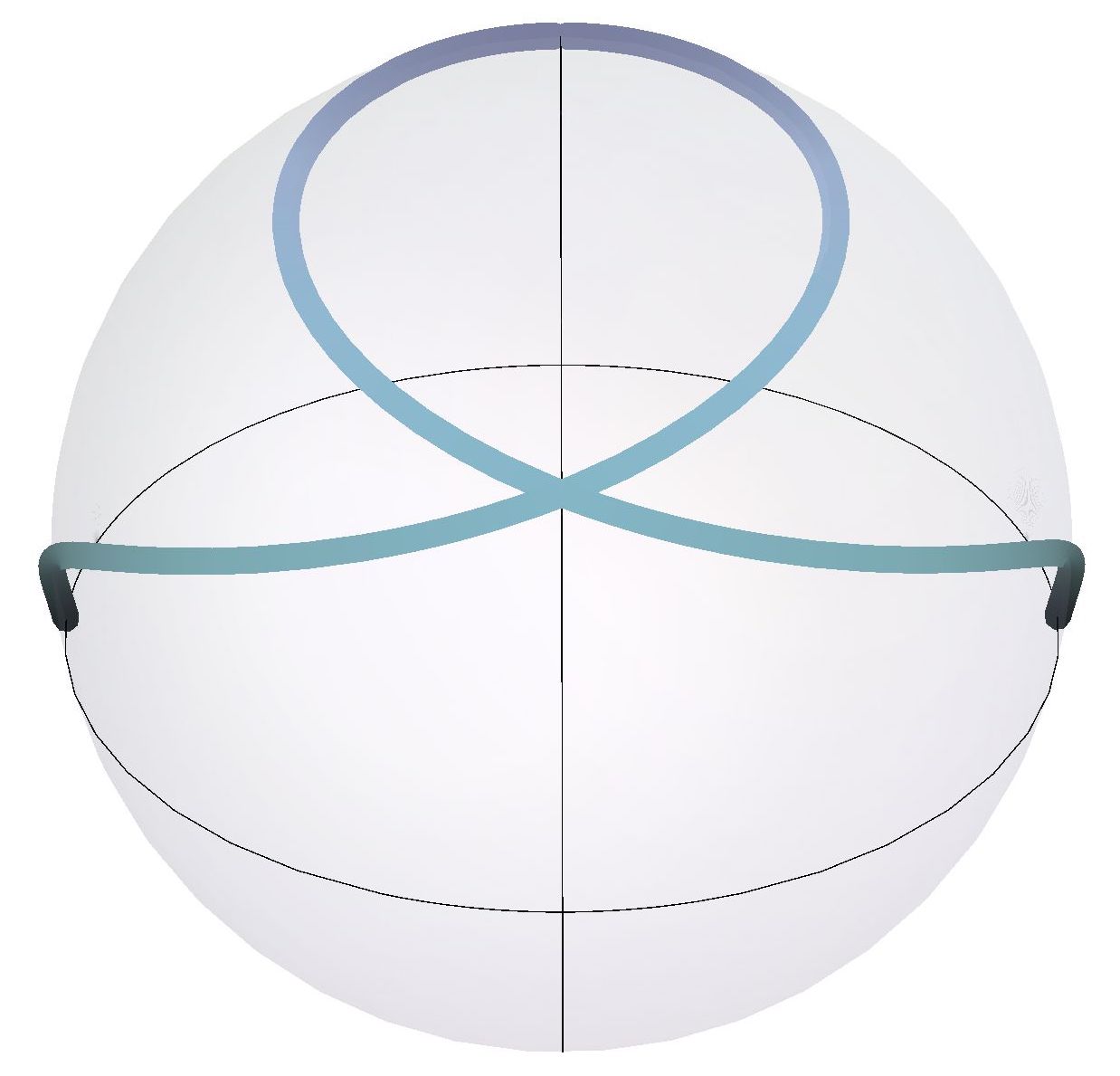}\\\vspace{0.5cm}\includegraphics[width=.16\textwidth]{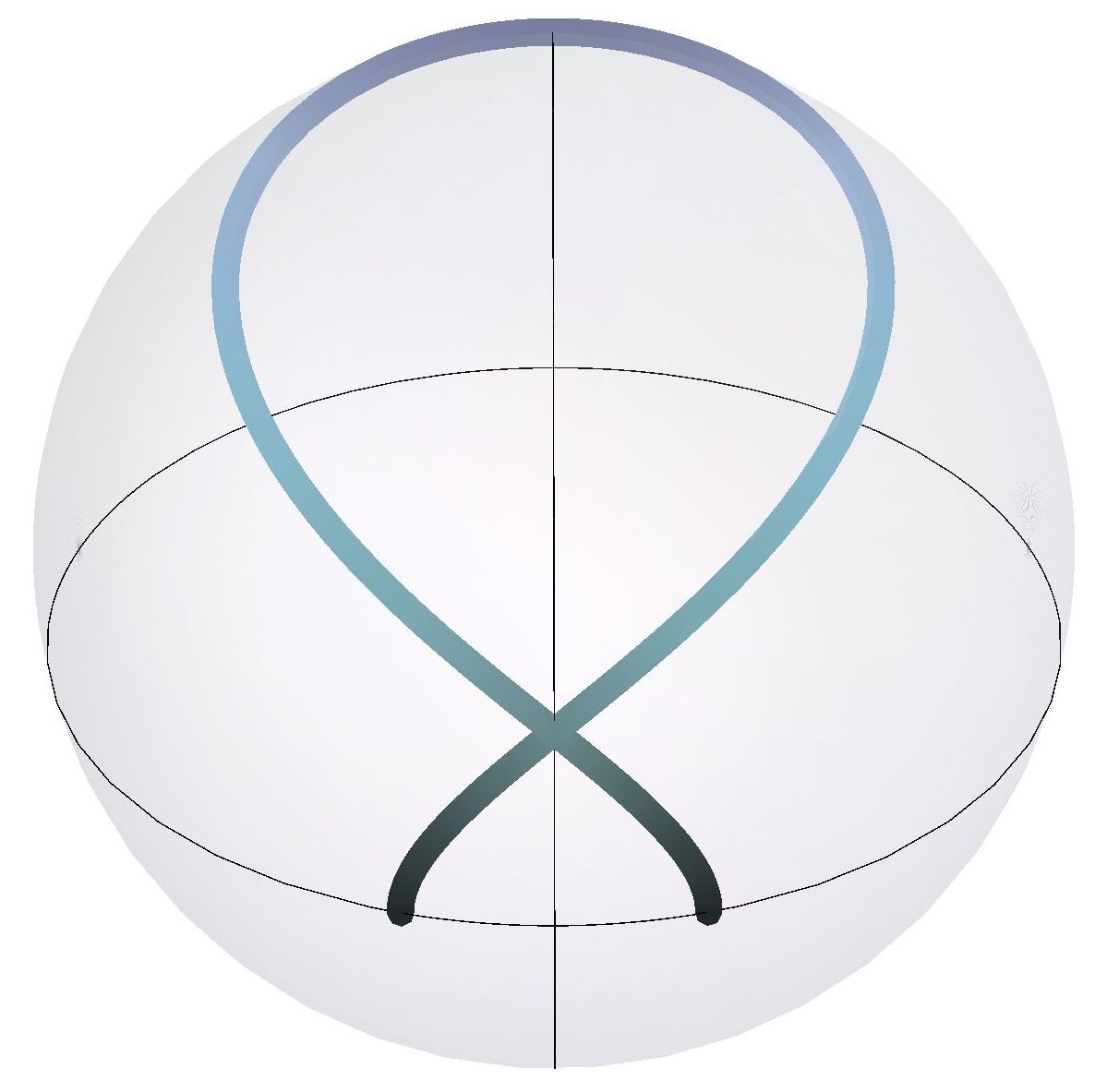}\quad\quad\includegraphics[width=.16\textwidth]{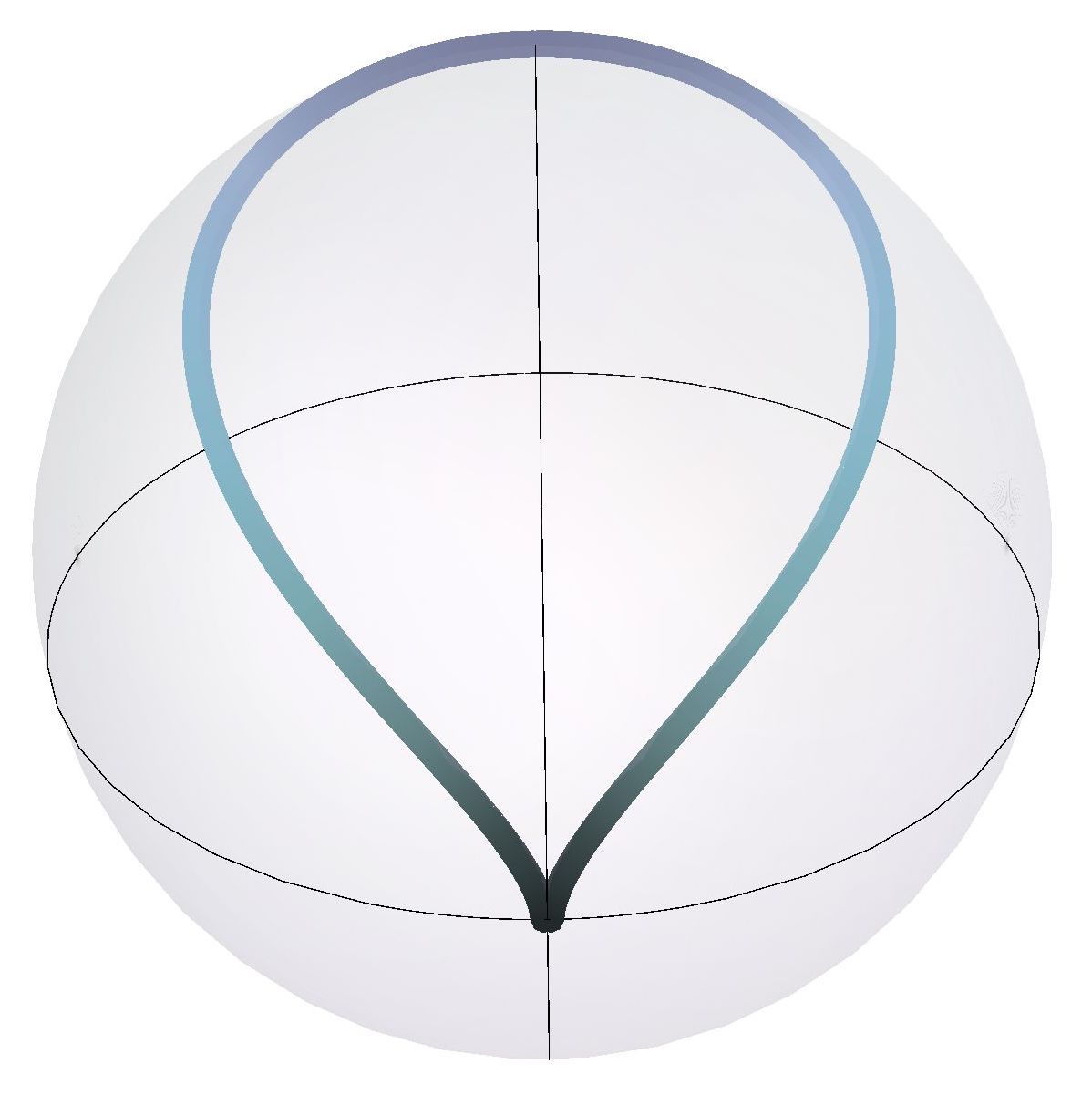}\quad\quad\includegraphics[width=.166\textwidth]{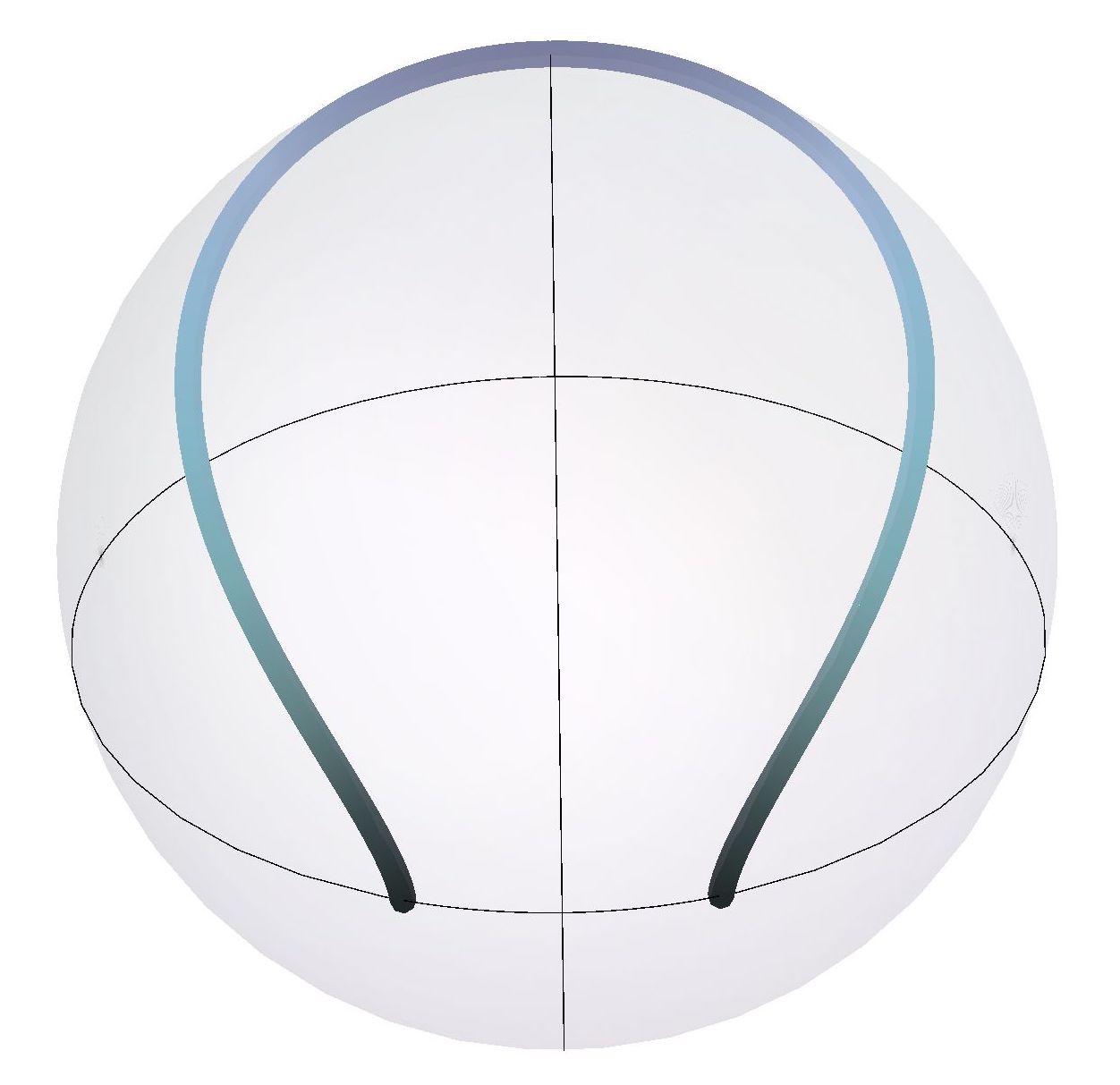}\quad\quad\includegraphics[width=.16\textwidth]{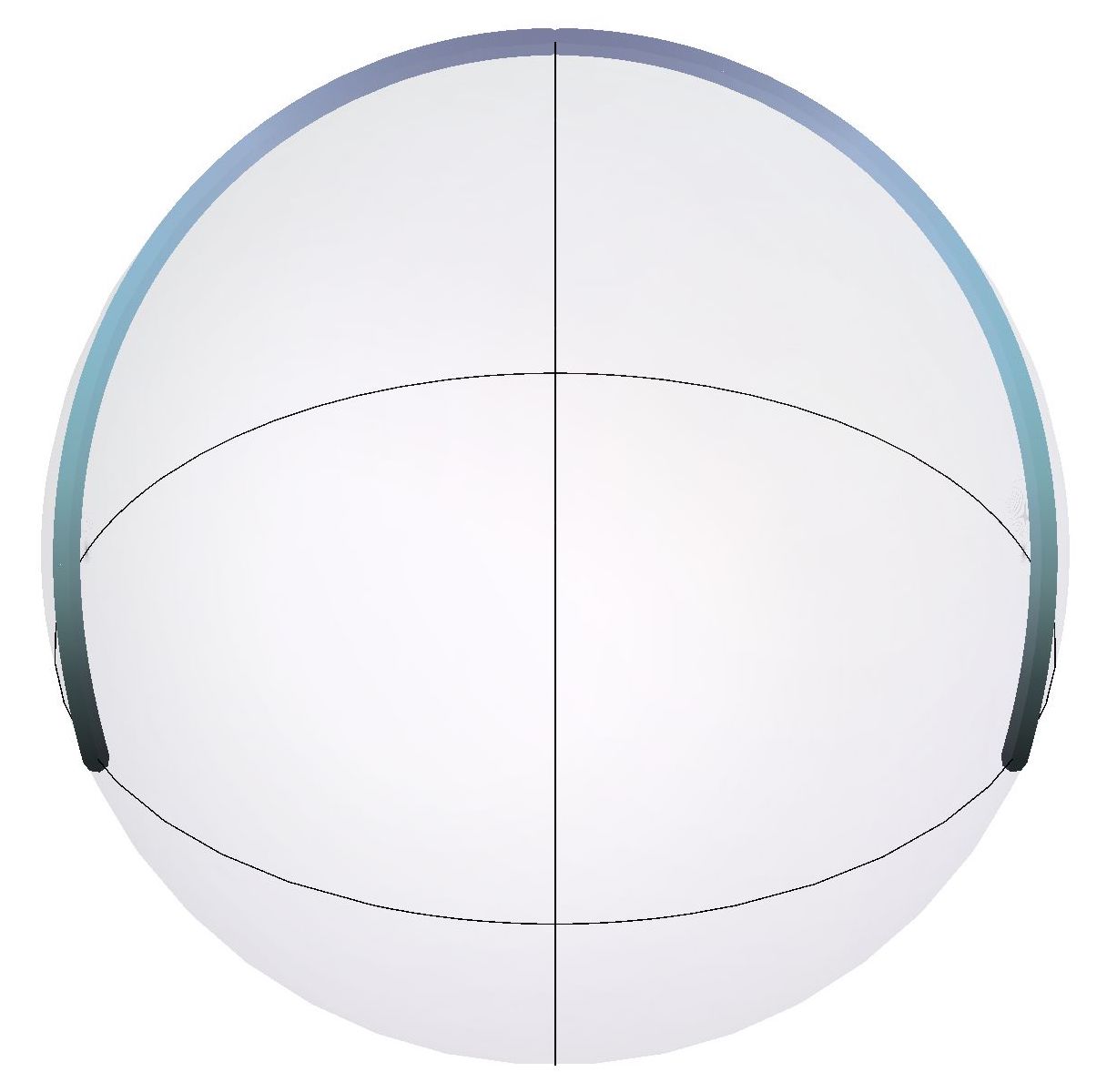}
\end{center}
\caption{Critical curves of $\mathbf{\Theta}_\mu$ with non-constant curvature in the sphere $\s^2(1)$ passing through the pole. From left to right: orbit-like type ($\mu\simeq 0.15$ and $\mu=0.35$), borderline type ($\mu\simeq 0.402$),  non-simple biconcave type ($\mu=0.42$ and $\mu=0.499$), figure-eight type ($\mu\simeq 0.54$),  simple biconcave type ($\mu=0.6$) and oval type ($\mu=1$). In all these cases $d=\mu^2e^2$.}
\label{criticalS2Pole}
\end{figure}

%%%%%%%%%%%%%%%%%%
\subsection{Case $\rho<0$: the hyperbolic plane  $\h^2(\rho)$}
%%%%%%%%%%%%%%%%%%

After rescaling and change of orientation, we can consider that $\rho=-1$ and $\mu>0$.   Recall that we are assuming  $d>0$. In this case, from \eqref{param}, a critical curve $\gamma$ parametrizes as 
\begin{equation}\label{paramh2}
\gamma(x)=\frac{1}{\sqrt{d}}\left(\mu x,\sqrt{d+\mu^2 x^2}\sinh\left(\sqrt{d}\,\psi(x)\right),\sqrt{d+\mu^2x^2}\cosh\left(\sqrt{d}\,\psi(x)\right)\right),
\end{equation}
where now we must use $\rho=-1$ in the expression \eqref{Psi(x)} of $\psi(x)$.

The classification of  the critical curves of $\mathbf{\Theta}_\mu$ in $\h^2(-1)$ is the following: see figure \ref{criticalH2}. 

\begin{theorem}\label{r5}
 Let $\mu>0$ and $d>0$ be arbitrary constants. The critical curves with non-constant curvature of $\mathbf{\Theta}_\mu$ in $\h^2(-1)$ for $d>0$ are of oval type, simple biconcave type, figure-eight type,  non-simple biconcave type, borderline type and orbit-like type.
\end{theorem}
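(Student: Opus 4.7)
The plan is to mimic the orbit-by-orbit analysis used in the proofs of Theorems \ref{r2} and \ref{r3}, since the qualitative structure of the phase portrait of system \eqref{php} is the same in this case as in the Euclidean one. For $\rho=-1$, the inequality $4\rho\mu^2<1$ holds automatically, so there are always two stationary points $P_{\pm}=(x_\pm,0)$ with $\log x_\pm=(1\pm\sqrt{1+4\mu^2})/2$; here $P_+$ is a centre and $P_-$ is an unstable saddle, and we have $x_-<1<e<x_+$. The function $\widehat{F}(x)=x^2((\log x-1)^2-\mu^2)$ (with $\rho=-1$) is increasing on $(0,x_-)$, decreasing on $(x_-,x_+)$, and increasing on $(x_+,\infty)$, with $0<\widehat{F}(x_+)<\widehat{F}(x_-)$. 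The only relevant difference with the Euclidean discussion is that now $\widehat{F}(x_+)$ is strictly positive, so the inner ``orbit-like'' region only exists in the range $\widehat{F}(x_+)<d<\widehat{F}(x_-)$.

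Having set this up, I would invoke Proposition \ref{sym} to restrict to the half-orbit $y\geq 0$ of any connected component of $F(x,y)=d$, and then use parametrization \eqref{paramh2} together with the monotonicity analysis of the angle function $\psi(x)$ in \eqref{Psi(x)}: recall that on this half-orbit one has $\psi'(x)<0$ for $x\in(0,e)$ and $\psi'(x)>0$ for $x>e$. I would then split into the same four cases as in Theorem \ref{r2}:
\begin{enumerate}
\item $d\le\widehat{F}(x_-)$ with $x_0\le x_-$: the curve starts orthogonally at $\beta$ (by Proposition \ref{sym}), $x$ grows monotonically to $x_0<e$, $\psi$ decreases, and the curve meets the symmetry axis $\alpha$ at $x=x_0$; this gives the oval type.
\item $d=\widehat{F}(x_-)$ with $x_0>x_-$: the curve is asymptotic in its ends to the constant-curvature curve corresponding to $P_-$ (a hypercycle/circle, by Proposition \ref{constant}), $\psi(x)$ first decreases on $(x_-,e)$ and then increases on $(e,x_0)$, and symmetry produces a single loop, giving the borderline type.
\item $d>\widehat{F}(x_-)$: the curve meets $\beta$ orthogonally at $x=0$, and the three subcases according to the sign of $\psi(0)$ produce, respectively, the non-simple biconcave type ($\psi(0)>0$), the figure-eight type ($\psi(0)=0$, i.e.\ the two poles coincide after symmetry), and the simple biconcave type ($\psi(0)<0$).
\item $\widehat{F}(x_+)<d<\widehat{F}(x_-)$ with $x_0>x_+$: the orbit is closed around the centre $P_+$, so $\kappa(s)=\log(x(s))/\mu$ is periodic and bounded below by $\log(x_-)/\mu$; the periodicity yields the orbit-like type.
\end{enumerate}

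The main conceptual obstacle, compared with the Euclidean and spherical arguments, is checking that the ``hyperbolic rotation'' in the third coordinate of \eqref{paramh2} does not distort the geometric interpretation of $\psi(x)$ as the angle-like coordinate that governs the cuts with $\alpha$ and $\beta$. For this, I would rely on the hypothesis $d>0$ together with the proposition established at the end of Section \ref{sec2} guaranteeing that, for $d>0$, the associated rotational surface is spherical, so that $\beta$ genuinely plays the role of the rotation axis and the cuts of $\gamma$ with $\alpha$ and $\beta$ can be read off the signs and zeros of $\psi(x)$ exactly as in the previous theorems. Once this geometric dictionary is in place, the case analysis above closes the proof, and a figure analogous to Figure \ref{criticalS2} can be produced to display each of the six resulting shapes.
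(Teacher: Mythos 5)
Your overall strategy is exactly the paper's: restrict to the branch $y\geq 0$ by Proposition \ref{sym}, read off the cuts with $\alpha$ and $\beta$ from the monotonicity of $\psi(x)$ in \eqref{Psi(x)}, and run the same four-case split on $d$ and $x_0$ as in Theorem \ref{r2}, with the subcases governed by the sign of $\psi(0)$. However, there is a concrete error in your description of the phase portrait: you assert $0<\widehat{F}(x_+)<\widehat{F}(x_-)$ and that the orbit-like regime is $\widehat{F}(x_+)<d<\widehat{F}(x_-)$ with a \emph{positive} lower bound. Since $\widehat{F}(x_{\pm})=x_{\pm}^2\log x_{\mp}$ and, for $\rho=-1$, $\log x_-=(1-\sqrt{1+4\mu^2})/2<0$, one actually has $\widehat{F}(x_+)<0<\widehat{F}(x_-)$. (Equivalently, $(\log x_+-1)^2-\mu^2<0$ because $\sqrt{1+4\mu^2}-1<2\mu$.) The picture you describe, with $\widehat{F}(x_+)$ strictly positive, is the spherical one for $\mu<1/2$ (Theorem \ref{r3}), not the hyperbolic one.

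The consequence is that your accounting of the level sets $F(x,y)=d$ is off: in $\h^2(-1)$, for \emph{every} $d\in\left(0,\widehat{F}(x_-)\right)$ the level set has three cuts with $y=0$, so an oval branch and a closed orbit around $P_+$ coexist for all such $d$; the correct range for the orbit-like case (given the standing restriction $d>0$) is $0<d<\widehat{F}(x_-)$ and $x_0>x_+$, not a window bounded below by a positive $\widehat{F}(x_+)$. Your version would wrongly exclude the orbit-like curves for small $d>0$. Since all six curve types still appear somewhere in your case analysis, the statement of the theorem is not lost, but the parameter ranges attached to each type are misidentified and should be corrected as above. The remaining points (asymptoticity to the hypercycle represented by $P_-$ in the borderline case, periodicity of $\kappa$ on closed orbits, and the use of $d>0$ to guarantee spherical rotations) match the paper's argument.
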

\begin{proof} For any constant $\mu>0$, the values of $x_{\pm}$ satisfy $0<x_{-}<1<e<x_{+}$. Moreover, we   have $\widehat{F}(x_{+})<0<\widehat{F}(x_{-})$. Hence, the orbit $F(x,y)=d$ has exactly three cuts with the axis $y=0$ if $d<\widehat{F}(x_{-})$. Otherwise, we have only one cut (recall that we are not considering the critical point $P_{-}$). 

We argue as in Theorem \ref{r2}. By Proposition \ref{sym}, we are only considering the branch $y\geq 0$ in the associated orbit. There are four different cases depending on the value of $d>0$ and the type of orbit described by the value $x_0$ (see figure \ref{orbitas}, left):
\begin{enumerate}
\item Case $d\leq \widehat{F}(x_{-})$ and $x_0\leq x_{-}$ (see purple orbits). The curve starts at $x=0$ meeting orthogonally the geodesic $\beta$. While $x$ increases until $x_0$, $\psi(x)$, \eqref{Psi(x)}, which represents the hyperbolic variation angle of $\gamma(x)$, \eqref{paramh2}, decreases. Thus, it is of oval type.
\item Case $d=\widehat{F}(x_{-})$ and $x_0>x_{-}$ (see the green orbit). In this case, using $\widehat{F}(x)$, it is easy to prove that $x>x_{-}>0$ so that the curve never cuts (or tends to) the axis $\beta$. In fact, $\gamma$ is asymptotic in its end-points to the critical hypercycle represented by $P_{-}$. While $x$ increases until $x_0$, the function $\psi(x)$, \eqref{Psi(x)}, decreases from $x=x_{-}$ to $x=e$, then it starts to increase. Notice that the $x_2$ component vanishes precisely at $x=x_0$ and, as a consequence, the curve cuts the geodesic $\alpha$ in that point. Finally, after symmetry, $\gamma$ has a single loop. Thus, it is of borderline type.
\item Case $d>\widehat{F}(x_{-})$ (see blue orbits). These curves begin at $x=0$ meeting orthogonally the geodesic $\beta$. While $x$ increases until $x_0$ the behavior of the function $\psi(x)$, \eqref{Psi(x)}, is the same as in previous point. Then, depending on the sign of $\psi(0)$ we have three different types:
\begin{enumerate}
\item Case $\psi(0)>0$. Since the hyperbolic variation angle in \eqref{paramh2} decreases from a positive value of the $x_2$ component at $x=0$ and then increases until reaching $x_2=0$, there is an intermediate point where $\gamma$ cuts the geodesic $\alpha$. Hence, the curve is of non-simple biconcave type.
\item Case $\psi(0)=0$. Here, $\gamma$ cuts $\alpha$ only at $x=0$ and at $x=x_0$. Thus, it is of figure-eight type.
\item Case $\psi(0)<0$. The component $x_2$ in \eqref{paramh2} has a negative value at $x=0$ and the hyperbolic variation angle gets more negative until $x=e$. Then, it increases until reaching $x_2=0$ at $x=x_0$. Necessarily the curve is simple and, as a consequence, of  simple biconcave type.
\end{enumerate}
\item Case $0<d<\widehat{F}(x_{-})$ and $x_0>x_{+}$ (see brown orbits). The curvature $\kappa$ of $\gamma$ for these values is periodic since the corresponding orbits are closed. The parameter $x$ is bounded from below by $x_{-}>0$ which means that $\gamma$ does not meet the geodesic $\beta$. It is also bounded from above by $x_0$. Hence $\gamma$ lies between two hypercycles. Therefore, the curve is of orbit-like type. 
%\textcolor{red}{By Proposition \ref{3.2}, none of these curves are closed.}
\end{enumerate}
This covers all the possible cases.
\end{proof}

\begin{figure}[hbtp]
\begin{center}
\includegraphics[width=.16\textwidth]{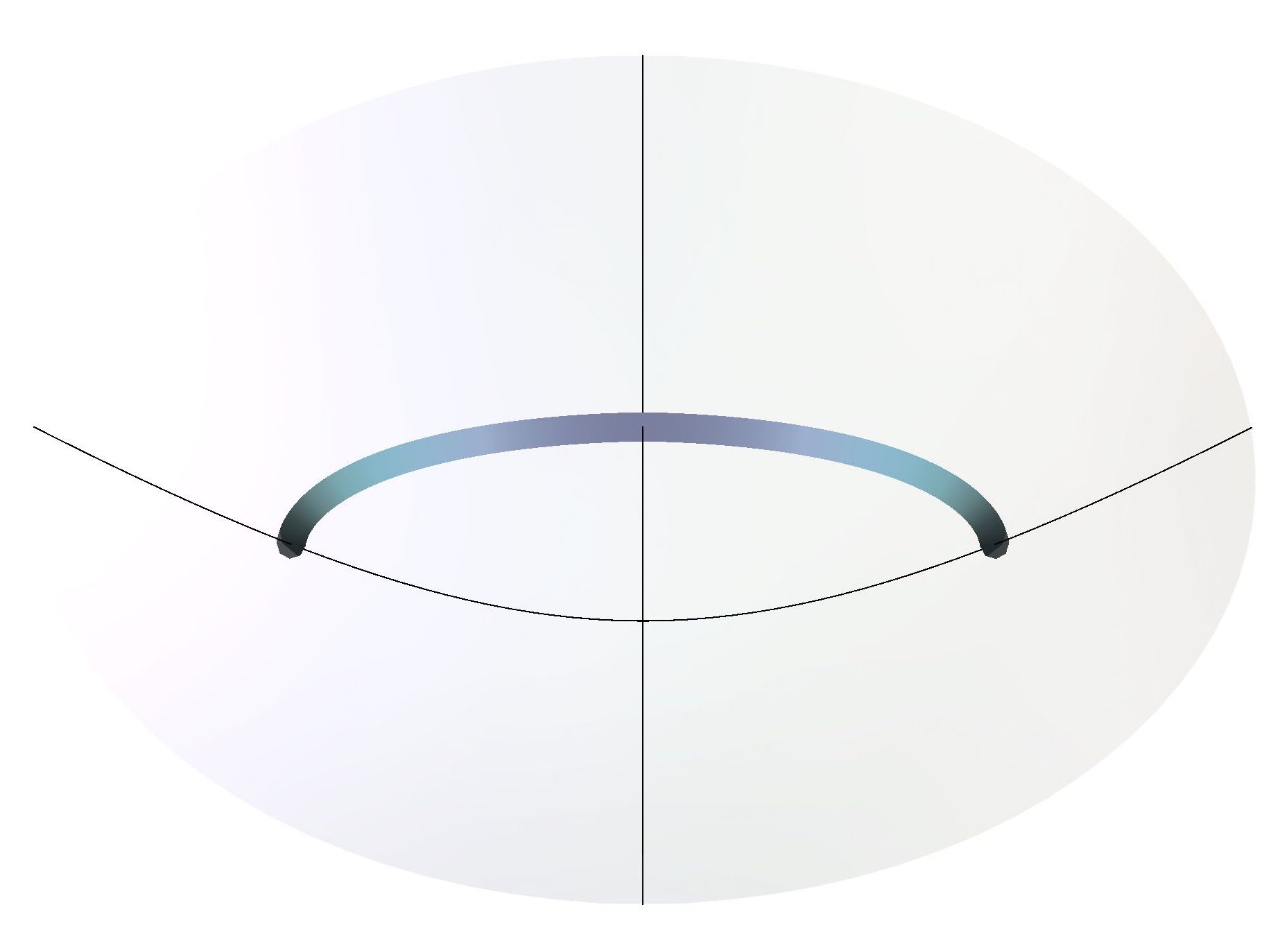}\,\includegraphics[width=.16\textwidth]{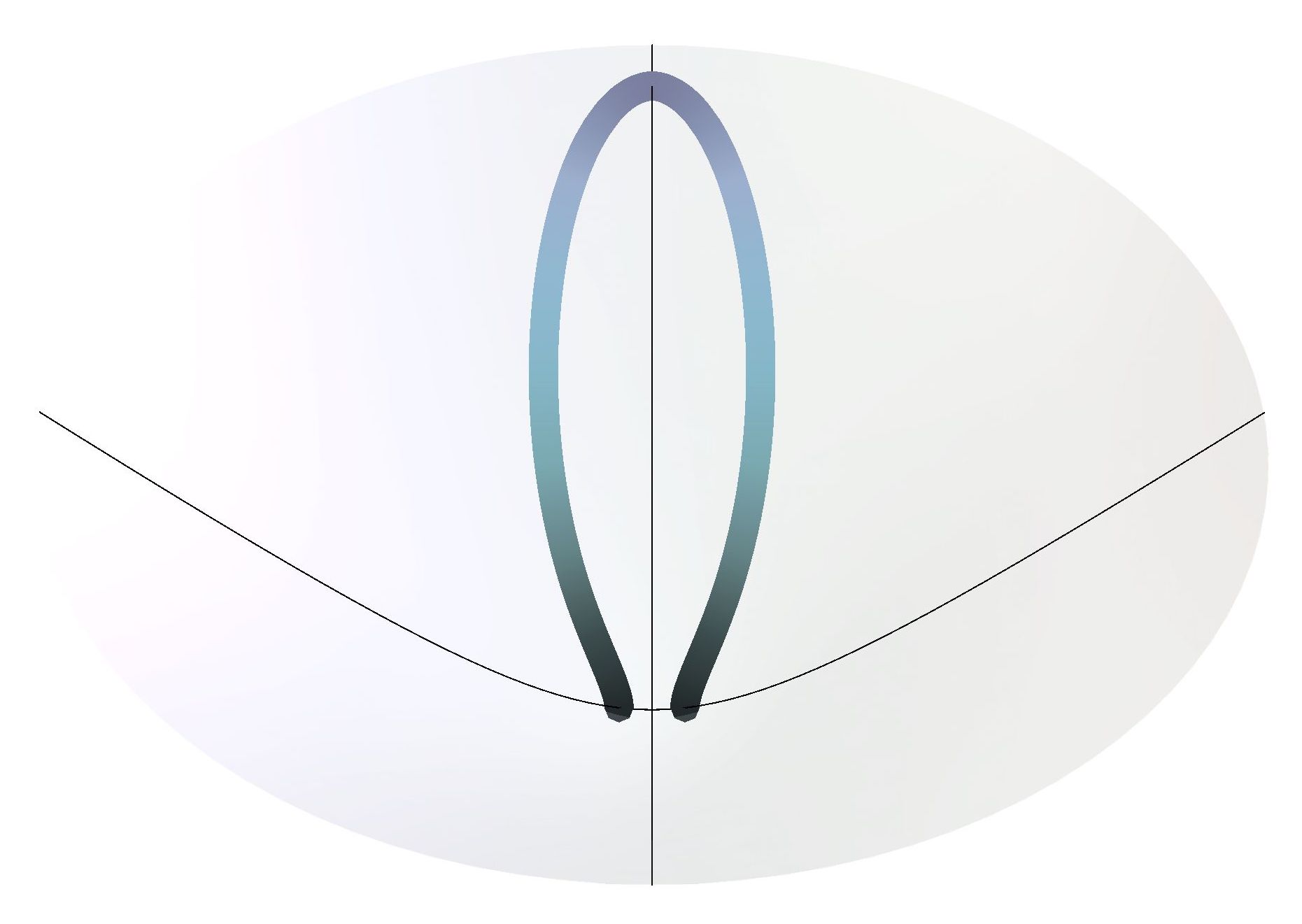}\,\includegraphics[width=.16\textwidth]{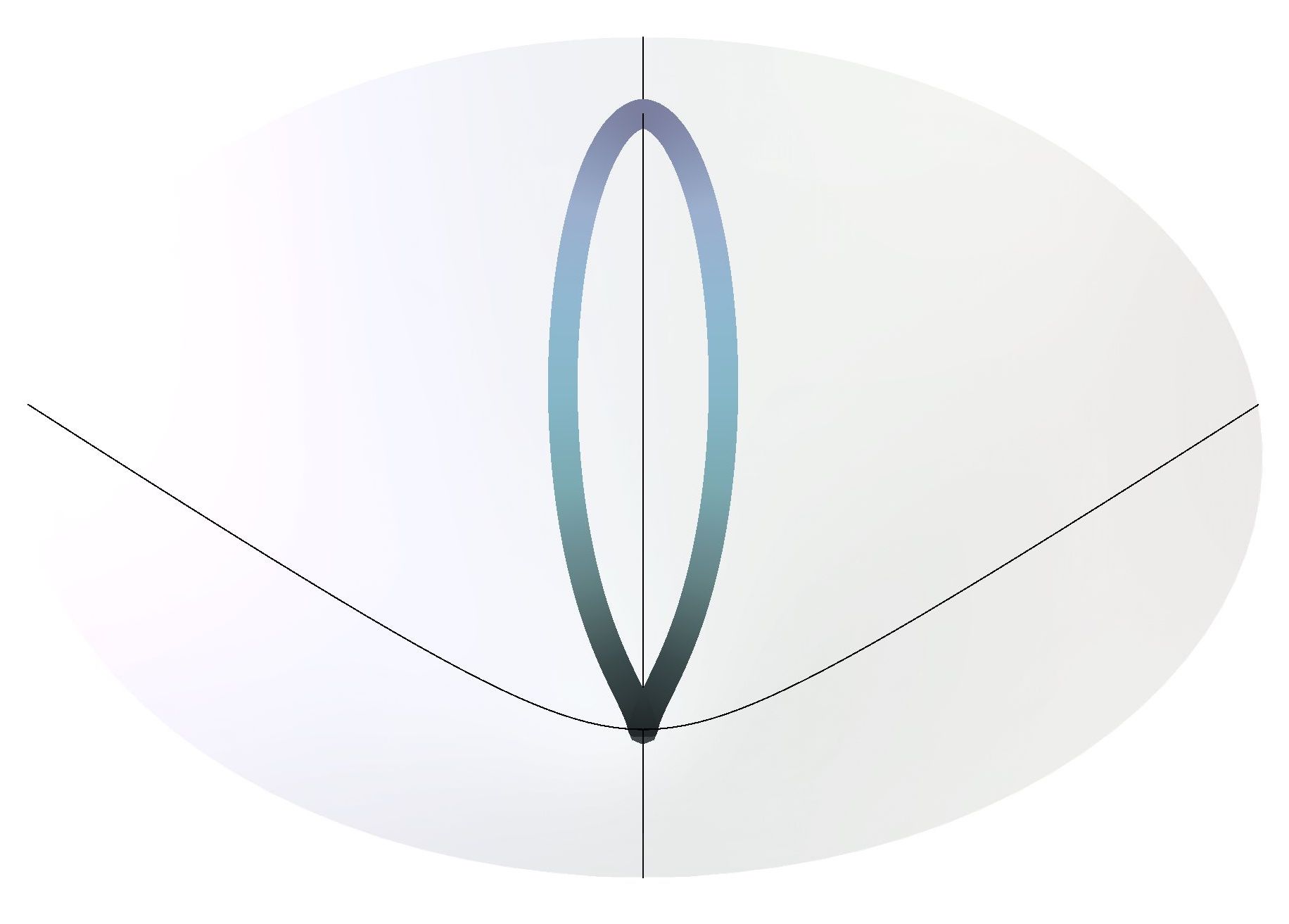}\,\includegraphics[width=.16\textwidth]{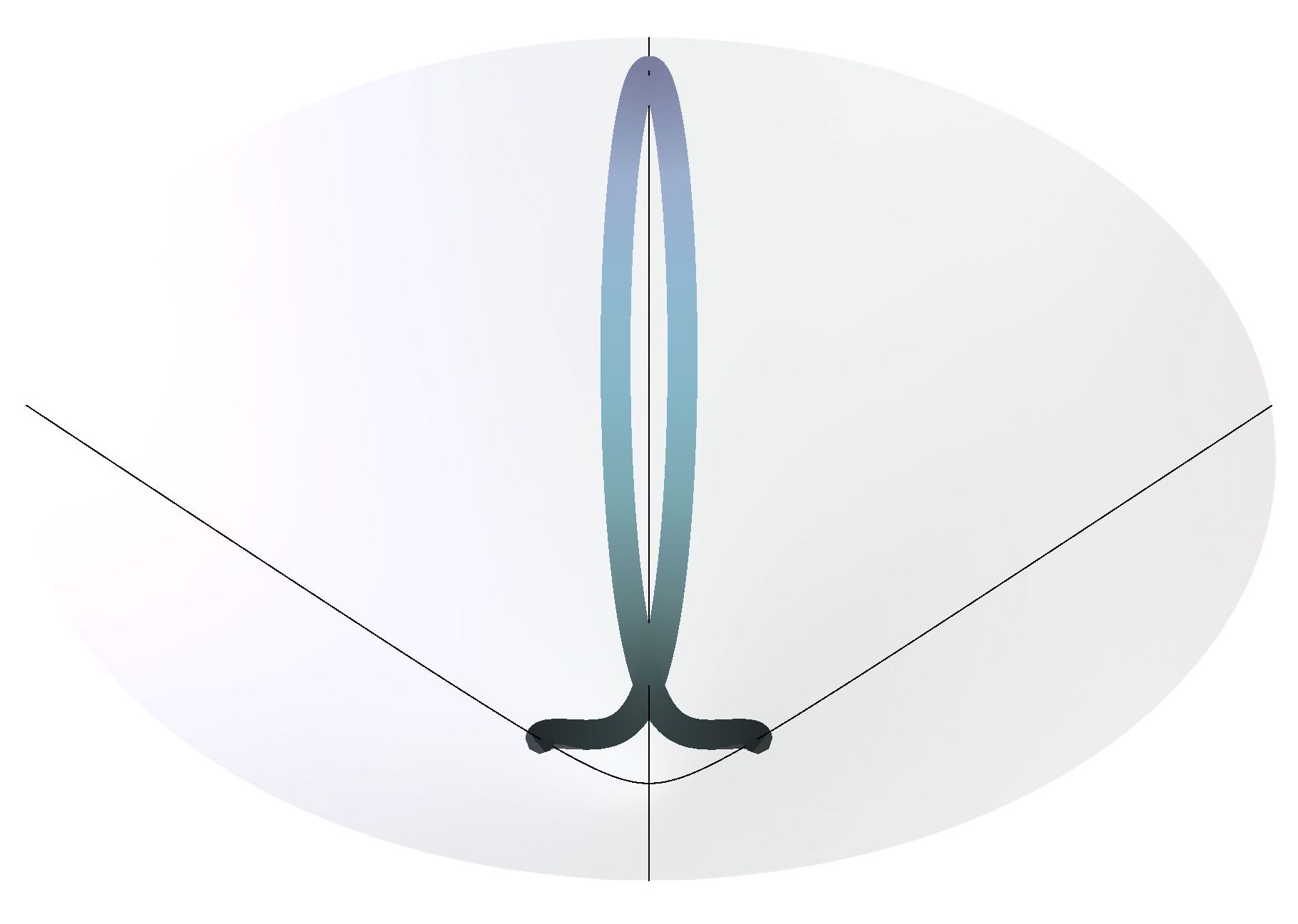}\,\includegraphics[width=.165\textwidth]{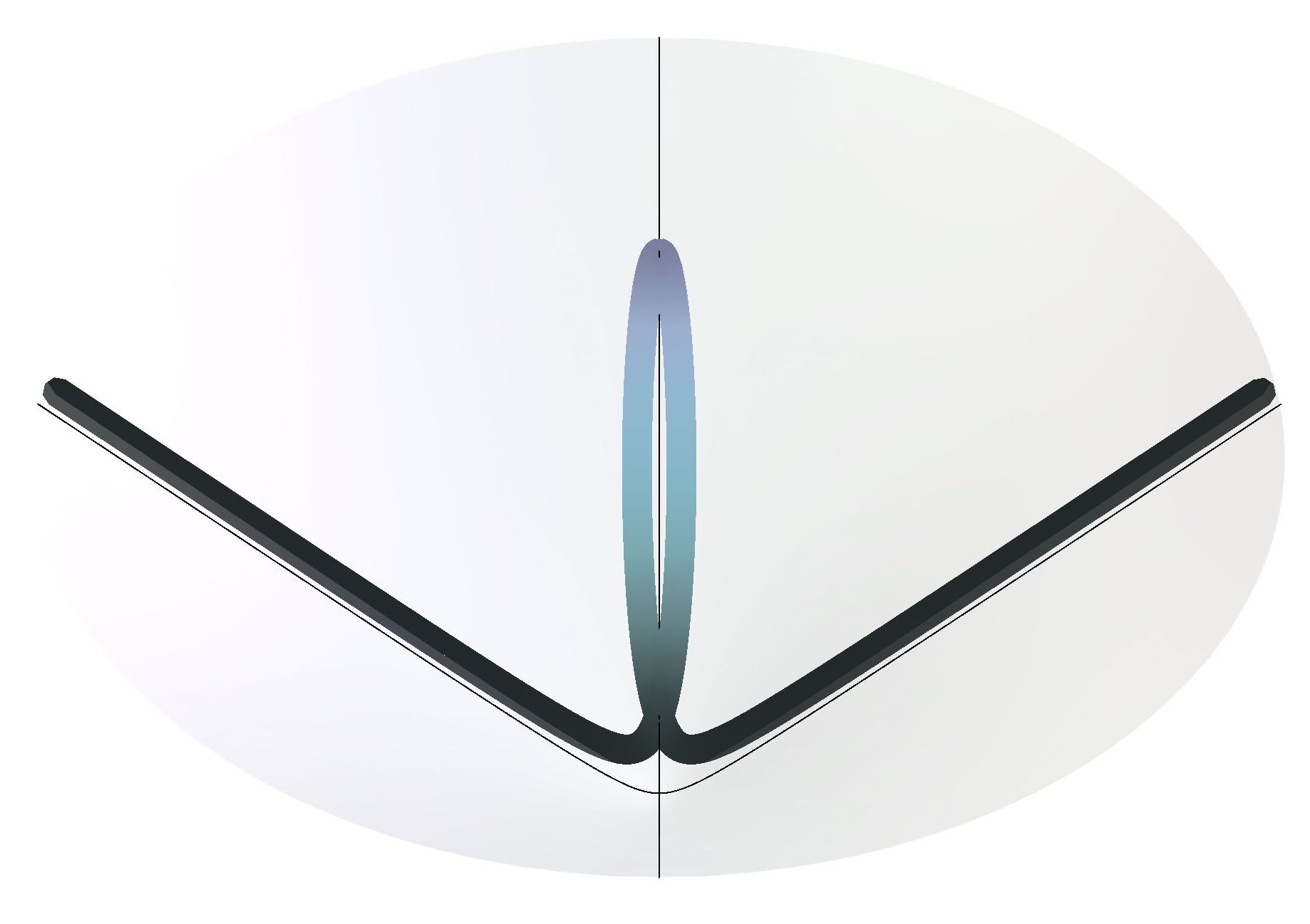}\,\includegraphics[width=.165\textwidth]{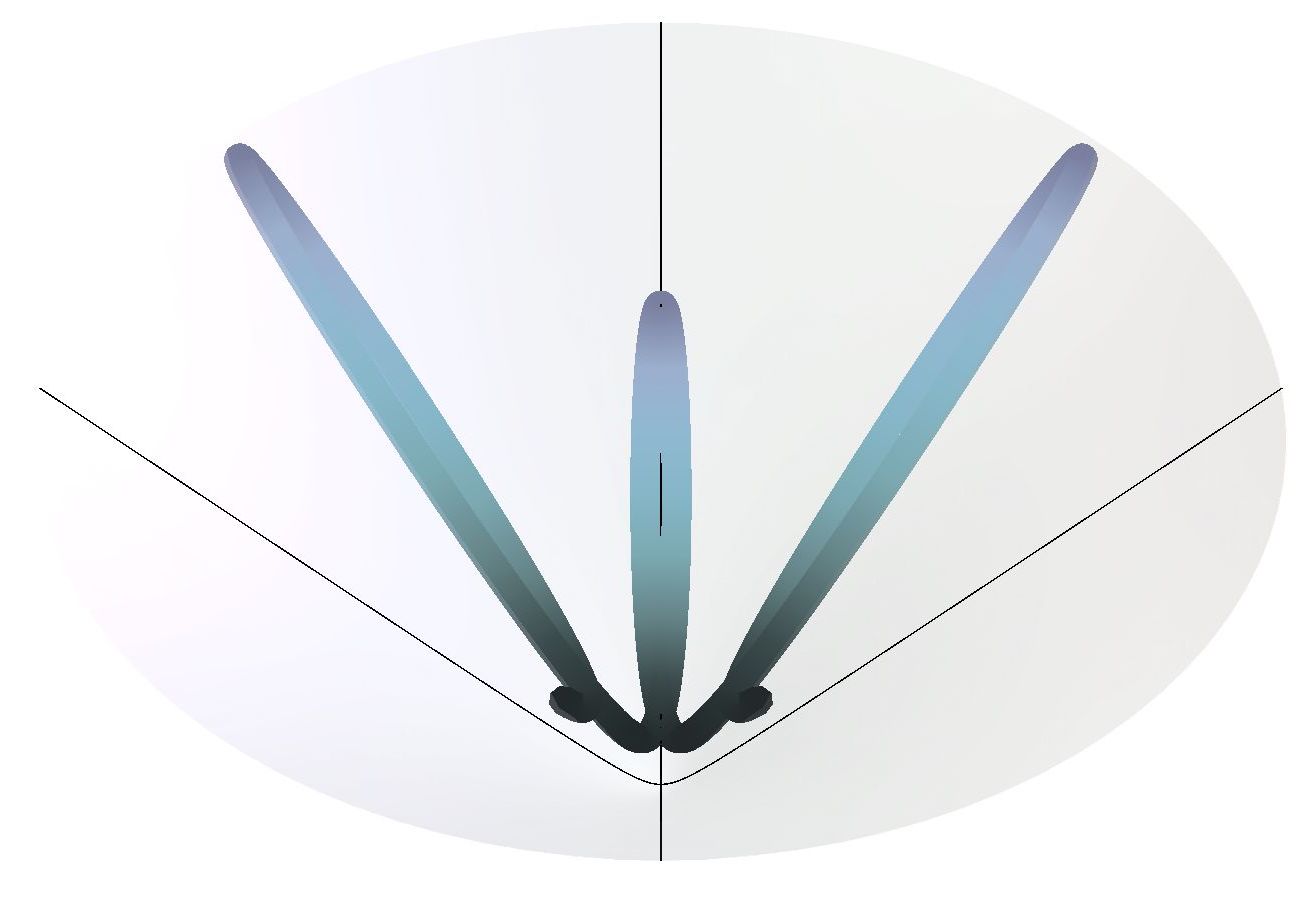}
\end{center}
\caption{Critical curves of $\mathbf{\Theta}_\mu$ with non-constant curvature in the sphere $\h^2(-1)$. The geodesic $\alpha$ is represented by the vertical hyperbola, whereas $\beta$ is the horizontal one. Here $\mu=1$ and, from left to right:  oval type ($d=0.3$),  simple biconcave type ($d=8$), figure-eight type ($d\simeq 4.5$),  non-simple biconcave type ($d=0.6$), borderline type ($d\simeq 0.47$) and orbit-like type ($d=0.3$).}
\label{criticalH2}
\end{figure}

\section*{Acknowledgements}
Rafael L\'opez has been partially supported by the grant no. MTM2017-89677-P, MINECO/ AEI/FEDER, UE.
\'Alvaro P\'ampano has been partially supported by MINECO-FEDER grant PGC2018-098409-B-100, Gobierno Vasco grant IT1094-16 and Programa Posdoctoral del Gobierno Vasco, 2018.


\begin{thebibliography}{777}

\bibitem{agp} J. Arroyo, O. J. Garay, A. P\'ampano, Binormal motion of curves with constant torsion in 3-spaces, Adv. Math. Phys. (2017), Art. ID 7075831, 8 pp. 

\bibitem{bg} M. Barros, O. J. Garay, Critical curves for the total normal curvature of 3-dimensional space forms, J. Math. Anal. Appl. 389 (2012), 275--292. 
 
\bibitem{ca1} E. Cartan, Familles de surfaces isoparam\'etriques dans les espaces \`a courbure constante, Ann. di Mat. 17 (1938), 177--191.

\bibitem{ca2} E. Cartan, Sur des familles remarquables d'hypersurfaces isoparam\'etriques dans les espaces sph\'eriques, Math. Z. 45 (1939), 335--367. 

\bibitem{chen} B.-Y. Chen, On the difference curvature of surfaces in euclidean space, Math. J. Okayama Univ. 14 (1969/70), 153--157.

\bibitem{costa} R. C. T. da Costa, Quantum mechanics of a constrained particle, Phys. Rev. A 23 (1981), 1982--1987.

\bibitem{encinosa} M. Encinosa, B. Etemadi, Energy shifts resulting from surface curvature of quantum nanostructures, Phys. Rev. A 58 (1998), 77--81.

\bibitem{li} F. Li, Z. Guo, Surfaces with closed Möbius form, Differential Geom. Appl. 39 (2015), 20--35.

\bibitem{hel} W. Helfrich, Elastic properties of lipid bilayers: theory and possible experiments, Z. Naturforsch. 28 (1973), 693--703.

\bibitem{jensen} H. Jensen, H. Koppe, Quantum mechanics with constraints, Ann. Phys. 63 (1971), 586--591.

\bibitem{ho} H. Hopf, \"{U}ber Fl\"{a}chen mit einer Relation zwischen den Hauptkr\"{u}mmungen, Math. Nachr. 4 (1951), 232--249.

\bibitem{ks} W. K\"{u}hnel, M. Steller, On closed Weingarten surfaces, Monatsh. Math. 146 (2005), 113--126. 

\bibitem{ls} J. Langer, D. Singer, The total squared curvature of closed curves, J. Differential Geom. 20 (1984), 1--22.

\bibitem{longui} S. Longui, Topological optical Bloch oscillations in a deformed slab waveguide, Optics Letters 32 (2007), 2647--2649.

\bibitem{lo} R. L\'opez, On linear Weingarten surfaces, Internat. J. Math. 19 (2008), 439--448.

\bibitem{lopa} R. L\'opez, A. P\'ampano, Classification of rotational surfaces in Euclidean space satisfying a linear relation between their principal curvatures, Math. Nach. to appear.

\bibitem{mar} A. Marchi, S. Reggiani, M. Rudan, A. Bertoni, Coherent electron transport in bent cylindrical surfaces, Phys. Rev. B 72 (2005) 035403.

\bibitem{mi1} T. K. Milnor, Restrictions on the curvatures of $\Phi$-bounded surfaces, J. Differential Geom. 11 (1976), 31--46.

\bibitem{mi2} T. K. Milnor, The curvatures of some skew fundamental forms, Proc. Amer. Math. Soc. 62 (1977), 323--329.
 
\bibitem{silva} L. da Silva, Surfaces of revolution with prescribed mean and skew curvatures in Lorentz-Minkowski space, arXiv: 1804.00259 [math.DG], (2018).

\bibitem{mo1} I. V. Mladenov, J. Oprea, The mylar balloon revisited, Amer. Math. Monthly 110 (2003), 761--784. 

\bibitem{pa} B. Papantoniou, Classification of the surfaces of revolution whose principal curvatures are connected by the relation $A\kappa_1+B\kappa_2=0$ where $A$ or $B$ is different of from zero, Bull. Calcutta Math. Soc. 76 (1984), 49--56. 

\bibitem{pfjz} J. K. Pedersen, D. V. Fedorov, A. S. Jensen, N. T. Zinner, Quantum single-particle properties in a one-dimensional curved space, J. Modern Opt., 63 (2016), p. 1814.

\bibitem{sbr} L. da Silva, C. Bastos, F. Ribeiro, Quantum mechanics of a constrained particle and the problem of prescribed geometry-induced potential,  Ann. Physics, 379 (2017), 13--33.

\bibitem{sch} V.H. Schultheiss, S. Batz, A. Szameit, F. Dreisow, S. Nolte, A. T\"{u}nnermann, S. Longhi, U. Peschel, Optics in curved space, Phys. Rev. Lett. 105 (2010), 143901.

\bibitem{toda} M. Toda, A. Pigazzini, A note on the class of surfaces with constant skew curvatures, J. Geom. Symmetry Phys. 46 (2017), 51--58. 
  
\end{thebibliography}
\end{document}